\theoremstyle{plain}
\newtheorem{theorem}{Theorem}[section]
\newtheorem{lemma}{Lemma}[section]
\newtheorem{proposition}{Proposition}[section]
\newtheorem{corollary}{Corollary}[section]
\newtheorem{definition}{Definition}[section]
\numberwithin{equation}{section}
\theoremstyle{remark}
\newtheorem{remark}{Remark}[section]
\numberwithin{equation}{section}
\newtheorem*{Theorem A}{{\bf Theorem A}}
\newtheorem*{Theorem B}{{\bf Theorem B}}
\newtheorem*{Theorem C}{Theorem C}
 \numberwithin{equation}{section}
\def\<{\left < }
\def\>{\right >}
\def\({\left ( }
\def\){\right )}
\def\r{\eqref }
\newcommand{\E}{\epsilon}
\begin{document}

\title[Growth Estimates for  Generalized Harmonic Forms with Applications] {Growth Estimates for  Generalized Harmonic Forms on Noncompact Manifolds with Geometric Applications}

\author{Shihshu Walter Wei$^*$ 
}
\address{Department of Mathematics\\
University of Oklahoma\\ Norman, Oklahoma 73019-0315\\ U.S.A.}
\email{wwei@ou.edu}

\keywords{Harmonic forms, $2$-balanced growth}

 \subjclass[2000]{Primary: 53C40}
\thanks{$^*$ Research was partially supported by NSF Award No DMS-0508661, OU Presidential International Travel Fellowship, and OU
Faculty Enrichment Grant.\\ }
\date{}

\begin{abstract} 
We introduce $\operatorname{Condition}\, \operatorname{W}\, \eqref{1.2}$ for a smooth differential form  $\omega$ on a complete
noncompact Riemannian manifold $M\, .$
We prove that $\omega$ is a harmonic form on $M$ if and only if
$\omega$ is both closed and co-closed on $M\, ,$ where $\omega$ has
$2$-balanced growth either for $q=2\, ,$ or for $1 < q(\ne 2) < 3\, $  with $\omega$ satisfying $\operatorname{Condition}\, \operatorname{W}\, \eqref{1.2}\, .$
In particular, every $L^2$ harmonic form, or every $L^q$ harmonic form,
$1<q(\ne 2)<3\, $ satisfying
$\operatorname{Condition}\, \operatorname{W}\, \eqref{1.2}$ is  both closed and co-closed (cf. Theorem \ref{T:1.1}).
This generalizes the work of A. Andreotti and E. Vesentini
\cite {AV} for every $L^2$ harmonic form $\omega\, .$ In extending $\omega$ in $L^2$ to $L^q$, for $q \ne 2$, $\operatorname{Condition}\, \operatorname{W}\, \eqref{1.2}$  has to be imposed due to counter-examples of D. Alexandru-Rugina$\big($\cite{AR} p. 81, Remarque 3$\big).$ We then study nonlinear partial differential inequalities for differential forms
$
\langle\omega, \Delta \omega\rangle \ge 0,
$ in which solutions $\omega$ can be viewed as generalized harmonic forms. We prove that under the same growth assumption on $\omega\, $ (as in Theorem \ref{T:1.1}, or \ref{T:1.2}, or \ref{T:1.3}), the following six statements:
$($i$)$$\quad \langle\omega, \Delta \omega\rangle \ge 0\, ,$
$($ii$)$$\quad \Delta \omega = 0\, ,$ $($iii$)$$\quad d\, \omega = d^{\star}\omega = 0\, ,$ 
$($iv$)$$\quad \langle \star\, \omega, \Delta \star\, \omega\rangle \ge 0\, ,$ 
$($v$)$$\quad \Delta \star\, \omega = 0\, ,$ and
$($vi$)$$\quad d\, \star\, \omega = d^{\star} \star\, \omega = 0\, $
are equivalent (cf. Theorem \ref{T:3.10}). We also show the nonexistence of eigenform associated with positive eigenvalue for $ \Delta \omega =\lambda \omega\, ,$ and the nonexistence of solution of $ \langle\omega, \Delta \omega\rangle > 0\, ,$ under the same growth assumption on $\omega\, .$ As a geometric application,
we observe that a both closed and co-closed form on $M\, $ satisfies a conservation law $\eqref{7.5}$ and can apply the theory we developed in \cite {DW} and \cite {W3}. Based on comparison theorem in Riemannian geometry under a curvature assumption, a conservation law, variational method and coarea formula, we solve constant Dirichlet problems for generalized harmonic $1$-forms and $F$-harmonic maps $\big ($When $F(t) = t$, they becomre harmonic maps$\big )$ on starlike domains in $M$ (cf. Theorem \ref{T:9.3} and 
 Theorem \ref{T:9.2}, generalizing and extending the work of Karcher and Wood for
harmonic maps on disc domains in $\mathbb R^n$ \cite{KW}), derive monotonicity formulas for $2$-balanced solutions, and vanishing theorems for $2$-moderate solutions of $\langle\omega, \Delta \omega\rangle \ge 0\, $ on $M\, $ (cf. Theorem \ref{T:7.2} and Theorem \ref{T:8.3}).
\end{abstract}

\maketitle
\section{Introduction}

The study of harmonic forms, or more generally $p$-harmonic
geometry is an active research area that interacts with or enters into 
many branches of mathematics. Harmonic forms generalize
harmonic functions in the study of partial differential equations
and potential theory, and are privileged representatives in a de Rham
cohomology class picked out by the Hodge Laplacian. 
 It is
well-known that on a compact Riemannian manifold, a smooth
differential form $\omega$ is harmonic if and only if it is closed and co-closed. That is,  

\begin{equation}\label{1.1} \Delta \omega
= 0 \qquad {\rm if}\quad {\rm and}\quad {\rm only}\quad {\rm
if}\qquad d\, \omega = 0\quad {\rm and}\quad d^{\star}\omega = 0.
\end{equation}
holds, where
$d\, $ denotes the exterior differential operator,  $d^{\star}\, $ is the codifferential
operator, and  $\Delta :=
-(dd^{*}+d^{*}d)$ denotes the Hodge Laplacian. \smallskip

On {\it complete noncompact} Riemannian manifolds, although \eqref{1.1} holds for smooth $\omega$ with compact support, it does not hold in general.
Simple examples include, in $\mathbb{R}^n\, ,$ a closed, non-co-closed, harmonic form $\omega_1 = x_1 d x_1\, ,$
a non-closed, co-closed, harmonic form $\omega_2 = x_n d x_1\, ,$ and a non-closed, non-co-closed, harmonic form $\omega_3 = (x_1 + x_n)d x_1\, ,$ or $\omega_4 = x_1 x_n d x_1 + x_n d x_n\, . $ However, it is proved in \cite {AV}
\begin{Theorem A}[A. Andreotti and E. Vesentini \cite {AV}] On a complete noncompact Riemannian manifold $M$, \eqref{1.1} holds for every smooth $L^2$ differential form $\omega\, .$  \end{Theorem A}
It is interesting to explore any possible generalizations of Theorem A, in particular, to discuss whether or not  \eqref{1.1} holds for $L^q$ differential form $\omega\, ,$ where $q\ne 2\, .$ Some study of this generalization can be found in  \cite [p.663, Proposition 1]{Y1}, its counter-examples are given by D. Alexandru Rugina on the hyperbolic space $H^m_{-1}, m\geq 3\, ,$ (cf. \cite [p. 81, Remarque 3] {AR}) and relevant remarks of  S. Pigola, A.G. Rigoli, and M. Setti are discussed in \cite [p.260, Remark B.8] {PRS}.
\smallskip

In the first part of this paper, we introduce and add Condition $\operatorname{W}$ \eqref{1.2} to the above study in extending $\omega$ in $L^2$ to $\omega$ in $L^q, q \ne 2$  so that the counter-examples cannot prevail, the conclusion of the Proposition still holds, and works in a more general setting with geometric and analytic applications. From now on, unless specified otherwise, we let $M$ be an $n$-dimensional complete noncompact Riemannian manifold, $A^k$ be the space of smooth differential $k$-forms on $M\, ,$ and $\omega$ be a smooth differential $k$-form, $k \ge 0$ on $M\, .$ 
Denote by $\langle \cdot , \cdot \rangle\, ,$ $| \cdot | \, ,$ and $\star : A^k  \to A^{n-k}\, ,$ the {\it inner product}, the {\it norm} induced in fibers of various tensor bundles by the metric of $M\, ,$ and the {\it linear operator} which assigns to each $k$-form on $M$ an $(n-k)$-form and which satisfies $\star\, \star = (-1)^{nk+k+1}$ respectively.  

\begin{definition} A differential form $\omega\, $ on $M$ is said to satisfy 
{\bf Condition W} if 
\begin{equation}\label{1.2}
\aligned 
& | \langle d(|\omega|^2)\wedge\omega, d\omega\rangle | \leq 2|\omega|^2|d \omega|^2\\
&\big | \langle d(| \star \omega|^2)\wedge  \star\omega, d \star\omega\rangle \big | \leq 2| \star\omega|^2|d  \star\omega|^2
\endaligned
\end{equation}\label{D:1.1}
\end{definition}

Simple examples of differential forms that satisfy $\operatorname{Condition}\, \operatorname{W}\,  \eqref{1.2}$ include smooth $0$-forms $($ or smooth functions $)$ and differential $n$-forms on $M\, .$
In general, there is a {\it hidden} duality involved. Namely, $\omega \in A^k$ satisfies  $\operatorname{Condition}\, \operatorname{W}\,  \eqref{1.2}$ if and only if $\star \, \omega \in A^{n-k}$ satisfies  $\operatorname{Condition}\, \operatorname{W}\,  \eqref{1.2}\, $(cf. Proposition \ref{P:2.5}).
\smallskip

We then extend the differential form in $L^2$ space in Theorem A in several ways. To this end, recall in extending functions in $L^2$ space (resp. in $L^q$ space), we introduce and study the notion of
function growth: ``\emph{$p$-finite},
\emph{$p$-mild}, \emph{$p$-obtuse}, \emph{$p$-moderate}, and
\emph{$p$-small}" growth $\big ($ for $q=2$ (resp. for the same value of $q$)$\big )$, and their counter-parts
``\emph{$p$-infinite}, \emph{$p$-severe}, \emph{$p$-acute},
\emph{$p$-immoderate}, and \emph{$p$-large}" growth \cite {WLW} (cf.
Definition \ref {D:2.2}).
We then introduce the notion of {\it $p$-balanced} and {\it
$p$-imbalanced} growth for functions and differential forms on
complete noncompact Riemannian manifold $M$ in \cite {W1}. Namely,
a function or a differential form $f$ has \emph{$p$-balanced
growth} $(or, simply, \emph{is $p$-balanced})$ if $f$ has one of
the following: \emph{$p$-finite}, \emph{$p$-mild},
\emph{$p$-obtuse}, \emph{$p$-moderate}, or \emph{$p$-small}
growth on $M$, and has \emph{$p$-imbalanced growth} $( $ or, simply,
is $p$-\emph{imbalanced}$)$ otherwise (cf.
Definition \ref {D:2.3}).
\smallskip

Furthermore, extending our techniques for function growth (cf. \cite {CW1, CW2}) to differential form growth, we use direct simple new methods (cf. Proofs of Theorems \ref{T:3.4} and \ref{T:3.5}) and obtain

\begin{theorem}\label{T:1.1} If a smooth differential $k$-form $\omega$ on $M\, $ has $2$-balanced growth for 
\begin{equation}\label{1.3}
 \begin{cases}
q = 2 & \quad \operatorname{or}\\
1<q(\ne 2)<3 & \quad with\quad \omega\quad satisfying\quad \operatorname{Condition}\, \operatorname{W}\,  \eqref{1.2}, 
\end{cases}
\end{equation}then \eqref{1.1} holds.
\end{theorem}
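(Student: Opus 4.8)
The easy half of \eqref{1.1} --- that $d\omega=0$ together with $d^{\star}\omega=0$ forces $\Delta\omega=0$ --- is immediate from $\Delta=-(dd^{\star}+d^{\star}d)$ and holds on any Riemannian manifold; the content is therefore the converse, and the plan is to derive it from $\Delta\omega=0$ and the growth hypothesis \eqref{1.3} by a weighted integration-by-parts (Caccioppoli-type) argument. Fix a compactly supported Lipschitz cutoff $\phi$ and test $\Delta\omega=0$ against the $k$-form $\eta=\phi^{2}\bigl(|\omega|^{2}+\varepsilon^{2}\bigr)^{(q-2)/2}\omega$; the parameter $\varepsilon>0$ makes $\eta$ an admissible test form when $q<2$ and is removed at the end by monotone convergence, while for $q=2$ one simply takes $\eta=\phi^{2}\omega$, the Andreotti--Vesentini choice. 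Since $\eta$ has compact support, integration by parts is legitimate and gives
\[
0=-\int_{M}\langle\Delta\omega,\eta\rangle=\int_{M}\langle d\omega,d\eta\rangle+\int_{M}\langle d^{\star}\omega,d^{\star}\eta\rangle .
\]

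Writing $\eta=f\omega$ with $f=\phi^{2}(|\omega|^{2}+\varepsilon^{2})^{(q-2)/2}$ and expanding $d\eta=df\wedge\omega+f\,d\omega$ and $d^{\star}\eta=f\,d^{\star}\omega-\iota_{\nabla f}\omega$ by the Leibniz rule, the right side breaks into three kinds of terms: the good quadratic term $\int\phi^{2}(|\omega|^{2}+\varepsilon^{2})^{(q-2)/2}\bigl(|d\omega|^{2}+|d^{\star}\omega|^{2}\bigr)$; two commutator terms coming from $\nabla\bigl((|\omega|^{2}+\varepsilon^{2})^{(q-2)/2}\bigr)$, essentially $\tfrac{q-2}{2}\int\phi^{2}|\omega|^{q-4}\langle d(|\omega|^{2})\wedge\omega,d\omega\rangle$ and $-\tfrac{q-2}{2}\int\phi^{2}|\omega|^{q-4}\langle d(|\omega|^{2})\wedge d^{\star}\omega,\omega\rangle$ (the second obtained by converting the interior product into a wedge via $\star$-adjointness); and cross terms $\int\phi|\omega|^{q-2}\langle d\phi\wedge\omega,d\omega\rangle$ and its $d^{\star}$ counterpart. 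Condition W \eqref{1.2} is precisely designed for the commutators: its first inequality bounds the first one in absolute value by $|q-2|\int\phi^{2}|\omega|^{q-2}|d\omega|^{2}$, and, using $|\star\omega|=|\omega|$ and $|d\star\omega|=|d^{\star}\omega|$, its second inequality --- equivalently Condition W for $\star\omega$, cf.\ Proposition~\ref{P:2.5} --- bounds the second by $|q-2|\int\phi^{2}|\omega|^{q-2}|d^{\star}\omega|^{2}$.

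The cross terms are absorbed by the Cauchy--Schwarz and Young inequalities, at the price of $\delta\int\phi^{2}|\omega|^{q-2}\bigl(|d\omega|^{2}+|d^{\star}\omega|^{2}\bigr)+C\delta^{-1}\int|\nabla\phi|^{2}|\omega|^{q}$ for any $\delta>0$. Because $|q-2|<1$ exactly in the range $1<q<3$, one may fix $\delta<1-|q-2|$, move every term but the good one to the right, and let $\varepsilon\to0$ (monotone convergence / Fatou), obtaining the weighted Caccioppoli inequality
\[
\int_{M}\phi^{2}|\omega|^{q-2}\bigl(|d\omega|^{2}+|d^{\star}\omega|^{2}\bigr)\,dv\ \le\ C_{q}\int_{M}|\nabla\phi|^{2}|\omega|^{q}\,dv
\]
for all compactly supported Lipschitz $\phi$, with $C_{q}$ depending only on $q$ and blowing up as $|q-2|\to1$; when $q=2$ Condition W is not used and this is Andreotti--Vesentini's estimate with $|\omega|^{q-2}=1$.

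Finally, the hypothesis that $\omega$ is $2$-balanced (for the relevant $q$) is exactly what supplies a sequence of cutoffs $\phi_{j}\uparrow 1$ --- tailored to whichever of $2$-finite, $2$-mild, $2$-obtuse, $2$-moderate, $2$-small growth holds, ranging from linear cutoffs in the mildest (e.g.\ $L^{q}$) case to logarithmic-type cutoffs on long annuli otherwise --- along which $\int_{M}|\nabla\phi_{j}|^{2}|\omega|^{q}\,dv\to 0$; this is the differential-form version of the function-growth machinery of \cite{WLW,W1,CW1,CW2} that underlies the proofs of Theorems~\ref{T:3.4} and \ref{T:3.5}. Passing to the limit forces $\int_{M}|\omega|^{q-2}\bigl(|d\omega|^{2}+|d^{\star}\omega|^{2}\bigr)\,dv=0$, hence $d\omega=0=d^{\star}\omega$ on the open set $\{\omega\ne0\}$; since these forms also vanish on $\operatorname{int}\{\omega=0\}$ and $\{\omega\ne0\}\cup\operatorname{int}\{\omega=0\}$ is dense in $M$, continuity gives $d\omega=0=d^{\star}\omega$ on all of $M$, which is \eqref{1.1}. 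The principal obstacle is the bookkeeping in the second paragraph: checking that the two terms produced by differentiating the weight are controlled by \emph{exactly} the pair of inequalities packaged into Condition W and that the absorption constant stays positive only for $1<q<3$ --- the breakdown of this absorption outside that window being what leaves room for the Alexandru-Rugina counterexamples on $H^{m}_{-1}$. A secondary technical point is the construction of the cutoffs $\phi_{j}$ adapted to each growth type, which is where the $2$-balanced assumption is consumed.
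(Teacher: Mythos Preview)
Your choice of test form $\eta=\phi^{2}(|\omega|^{2}+\varepsilon^{2})^{(q-2)/2}\omega$, your use of Condition~W to control the two commutator terms, and your identification of $1<q<3$ as the range where $1-|q-2|>0$ all match the paper. The gap is in the last two paragraphs.

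When you pass from Cauchy--Schwarz to Young and absorb, you arrive at
\[
\int_{M}\phi^{2}|\omega|^{q-2}\bigl(|d\omega|^{2}+|d^{\star}\omega|^{2}\bigr)\,dv\ \le\ C_{q}\int_{M}|\nabla\phi|^{2}|\omega|^{q}\,dv,
\]
and you then assert that $2$-balanced growth supplies cutoffs $\phi_{j}\uparrow 1$ with $\int_{M}|\nabla\phi_{j}|^{2}|\omega|^{q}\,dv\to 0$. That is not what $2$-balanced growth gives you. Take the simplest case, $2$-finite growth: one only has $\liminf_{r\to\infty}r^{-2}\int_{B(r)}|\omega|^{q}\,dv<\infty$, so with standard linear cutoffs $|\nabla\phi|\le C/r$ on $B(2r)\setminus B(r)$ the right-hand side is merely \emph{bounded} along a subsequence, not tending to zero. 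Your inequality then yields only $\int_{M}|\omega|^{q-2}(|d\omega|^{2}+|d^{\star}\omega|^{2})\,dv<\infty$, which says nothing. The same obstruction arises for the other four growth types; none of them asserts that a quantity of the form $r^{-2}\int_{B(r)}|\omega|^{q}$ tends to zero.

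The paper avoids this by \emph{not} applying Young. It stops at the Cauchy--Schwarz stage
\[
(1-|q-2|)\,Q(t)\ \le\ 2\sqrt{2}\Bigl(\int_{B(t)\setminus B(s)}|\nabla\phi|^{2}|\omega|^{q}\Bigr)^{1/2}\Bigl(\int_{B(t)\setminus B(s)}\phi^{2}|\omega|^{q-2}(|d\omega|^{2}+|d^{\star}\omega|^{2})\Bigr)^{1/2},
\]
squares, and rewrites this with $s=r_{j}$, $t=r_{j+1}$ as the \emph{quadratic} recursion
\[
Q_{j+1}^{2}\ \le\ C\,\frac{r_{j+1}^{2}A_{j+1}-r_{j}^{2}A_{j}}{(r_{j+1}-r_{j})^{2}}\,(Q_{j+1}-Q_{j}),
\]
where $Q_{j}$ is the weighted energy on $B(r_{j})$ and $A_{j}=r_{j}^{-2}\int_{B(r_{j})}|\omega|^{q}$. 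The telescoping factor $Q_{j+1}-Q_{j}$ on the right is what turns mere boundedness of $A_{j}$ (the $2$-finite case) into $\sum_{j}Q_{j+1}^{2}<\infty$, hence $Q_{j}\to 0$; and the same recursion, manipulated differently, handles each of the remaining four growth types (Theorems~\ref{T:3.2}--\ref{T:3.5}). Your Young step discards exactly this annular/telescoping structure, and with it the proof.
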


When $\omega\, $ is in $L^2\, ,$ we recapture Theorem A of Andreotti and Vesentini, as an $L^2$ form has \emph{$2$-finite}, \emph{$2$-mild},
\emph{$2$-obtuse}, \emph{$2$-moderate}, and \emph{$2$-small}
growth for $q=2$, by Proposition \ref{P:2.2}, and hence has \emph{$2$-balanced growth} for $q=2\, ,$ by Definition \ref{D:2.3}.
Theorem \ref{T:1.1} also 
extends $\omega\, $ in $L^2\, $ to $\omega\, $ in $L^q\, , 1 < q(\ne 2) < 3\, ,$ and to $2$-balanced growth for $1 < q(\ne 2) < 3\, .$ But this extension, in sharp contrast to the $2$-balanced growth for $q=2$ extension, requires an additional assumption 
that $\omega$ satisfies Condition $\operatorname{W}\, $ \eqref{1.2}. Or there would be counter-examples $\big($\cite{AR} p. 81, Remarque 3 in reference to \cite {Y1}$\big)\, .$ For clarity, it will be discussed in Remarks \ref{R:3.2} and \ref{R:3.3}, that why, where, and how Condition $\operatorname{W}\, $ \eqref{1.2} is used for the case $q \ne 2$, and is not needed for the case $q=2$ in the unified proof of Theorems \ref{T:1.1} and \ref{T:1.2}. Furthermore, examples and counter-examples of differential forms that satisfy $\operatorname{Condition}\, \operatorname{W}$ \eqref{1.2} are also provided (cf. Propositions \ref{P:2.3} and \ref{P:2.4}, and Remark \ref{R:2.1}).  As an immediate consequence of Theorem \ref{T:1.1}, we have

\begin{corollary} \label{C:1.1}$\, $  $(i)$ $($\cite {AV}$)$ Let $\omega$ be an $L^2$ 
differential $k$-form on $M$. Then  \eqref{1.1} holds. 
$(ii)$ Let $\omega$ be an $L^q, 1<q(\ne 2)<3\, $ 
differential $k$-form on $M$  satisfying
$\operatorname{Condition}\, \operatorname{W}\, $ \eqref{1.2}. Then  \eqref{1.1} holds. 

\noindent
$(iii)$ Let  $\omega$ be an $L^q, 1 < q < 3$ differential $0$-form on $M$. Then $\omega$ is a harmonic function if and only if $\omega$ is constant on $M\, .$ 
\end{corollary}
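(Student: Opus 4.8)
The plan is to obtain all three parts as direct corollaries of Theorem~\ref{T:1.1}; the only substantive step is translating the hypothesis ``$\omega\in L^q$'' into ``$\omega$ has $2$-balanced growth.'' First I would invoke Proposition~\ref{P:2.2}: it records that a smooth $L^q$ differential $k$-form on $M$ has $2$-finite, $2$-mild, $2$-obtuse, $2$-moderate, and $2$-small growth for the given value of $q$, and therefore, by Definition~\ref{D:2.3}, has $2$-balanced growth for that $q$. (For $q=2$ this is exactly the observation made in the text immediately after Theorem~\ref{T:1.1}.)

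With the reduction in hand, part~(i) is immediate: an $L^2$ form $\omega$ is $2$-balanced for $q=2$, so Theorem~\ref{T:1.1} with $q=2$ gives \eqref{1.1} with no extra hypothesis. For part~(ii), an $L^q$ form with $1<q(\ne 2)<3$ is $2$-balanced for that $q$; since $\omega$ is additionally assumed to satisfy $\operatorname{Condition}\,\operatorname{W}\,\eqref{1.2}$, Theorem~\ref{T:1.1} applies in the second case of \eqref{1.3} and again yields \eqref{1.1}.

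For part~(iii), I would first note that every smooth $0$-form on $M$ satisfies $\operatorname{Condition}\,\operatorname{W}\,\eqref{1.2}$ trivially: writing $\omega=f$, one has $d(|f|^2)\wedge f = 2f^2\,df$, so the first inequality of \eqref{1.2} is an equality, while $\star f$ is an $n$-form, whence $d(|\star f|^2)\wedge\star f$ and $d\star f$ are $(n+1)$-forms and both vanish, making the second inequality $0\le 0$. Hence parts~(i) and~(ii) together apply to every $L^q$, $1<q<3$, $0$-form (using (i) when $q=2$ and (ii) when $q\ne 2$), so \eqref{1.1} holds for such $\omega$. Since $d^{\star}\omega=0$ automatically for a $0$-form, \eqref{1.1} reduces to ``$\Delta\omega=0$ if and only if $d\omega=0$''; and $d\omega=0$ on the (connected) manifold $M$ forces $\omega$ to be constant, while conversely a constant function is harmonic. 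This gives the stated equivalence.

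I do not expect a genuine obstacle here, since the corollary is advertised as an immediate consequence of Theorem~\ref{T:1.1}. The one place to be careful is the reduction step, namely making sure Proposition~\ref{P:2.2} really delivers $2$-balanced growth from $L^q$ membership throughout the relevant range of $q$, and, in part~(iii), recording the trivial verification of $\operatorname{Condition}\,\operatorname{W}$ for $0$-forms so that part~(ii) is genuinely applicable when $q\ne 2$.
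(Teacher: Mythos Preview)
Your proposal is correct and follows essentially the same route as the paper: the paper deduces Corollary~\ref{C:1.1} from Corollary~\ref{C:1.3}, whose proof in turn invokes Proposition~\ref{P:2.2} (to pass from $L^q$ to $2$-balanced) and the main theorem, exactly as you do. The only cosmetic difference is that the paper cites Theorem~\ref{T:1.2} (the $\langle\omega,\Delta\omega\rangle\ge 0$ version) via Corollary~\ref{C:1.3}, whereas you invoke Theorem~\ref{T:1.1} directly; since Theorem~\ref{T:1.1} is precisely the harmonic-form special case, this is not a genuine divergence.
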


In view of Propositions \ref{P:2.3} and \ref{P:2.4}, we have \smallskip

\begin{corollary}\label{C:1.2}$\, $ Let $\omega$ be a simple $k$-form or a differential $k$-form satisfying Kato's type inequality \eqref{2.7} on $M$. If $\omega$ has $2$-balanced growth for $\it {1 < q < 3}$, then $\omega$ is harmonic if and only if it is both closed and co-closed on $M\, .$
In particular, for every $L^q\, ,$ $\it {1 < q < 3}\, $ simple $k$-form or a differential $k$-form $\omega$ satisfying \eqref{2.7} on $M\, ,$  \eqref{1.1} holds.
\end{corollary}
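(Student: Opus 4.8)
The plan is to derive Corollary \ref{C:1.2} as an immediate consequence of Theorem \ref{T:1.1}, once we know that the two classes of forms in the hypothesis automatically satisfy $\operatorname{Condition}\, \operatorname{W}\, \eqref{1.2}$. First I would invoke Proposition \ref{P:2.3}, to the effect that every simple $k$-form on $M$ satisfies $\operatorname{Condition}\, \operatorname{W}\, \eqref{1.2}$, together with Proposition \ref{P:2.4}, to the effect that every differential $k$-form on $M$ satisfying Kato's type inequality \eqref{2.7} satisfies $\operatorname{Condition}\, \operatorname{W}\, \eqref{1.2}$. Thus, under either alternative of the hypothesis of the corollary, the form $\omega$ satisfies $\operatorname{Condition}\, \operatorname{W}\, \eqref{1.2}$.

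Next I would dispose of the main assertion by a case split on $q$. If $q = 2$, then the first line of \eqref{1.3} holds, so Theorem \ref{T:1.1} applies directly and \eqref{1.1} holds. If $1 < q(\ne 2) < 3$, then, having just verified that $\omega$ satisfies $\operatorname{Condition}\, \operatorname{W}\, \eqref{1.2}$, the second line of \eqref{1.3} holds, so Theorem \ref{T:1.1} applies again and \eqref{1.1} holds. Hence in all cases $1 < q < 3$ the form $\omega$ is harmonic on $M$ if and only if it is both closed and co-closed on $M$; the point worth stressing is that no value of $q$ in $(1,3)$ need be excluded here, because $\operatorname{Condition}\, \operatorname{W}$ --- an extra hypothesis in Theorem \ref{T:1.1} only when $q \ne 2$ --- is automatic for simple forms and for forms obeying \eqref{2.7}.

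For the final (``in particular'') statement I would note that an $L^q$ form has $2$-balanced growth for that same exponent $q$: by Proposition \ref{P:2.2} an $L^q$ form has $2$-finite (hence also $2$-mild, $2$-obtuse, $2$-moderate and $2$-small) growth for $q$, and therefore $2$-balanced growth for $q$ by Definition \ref{D:2.3}. Applying the first part of the corollary to such an $\omega$ with $1 < q < 3$ then yields \eqref{1.1}. I do not expect a genuine obstacle: the real content is already packaged in Theorem \ref{T:1.1} and in Propositions \ref{P:2.2}, \ref{P:2.3} and \ref{P:2.4}. The one thing to be careful about is that the growth hypothesis is attached to a single exponent $q$, so one must invoke the matching alternative of \eqref{1.3} in the $q = 2$ and $q \ne 2$ cases and, in the $L^q$ reduction, make sure the growth classes are read off for the same $q$ appearing in $L^q$.
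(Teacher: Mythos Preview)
Your proposal is correct and is essentially the paper's own argument. The paper deduces Corollary~\ref{C:1.2} from Corollary~\ref{C:1.4}, which in turn is obtained from Propositions~\ref{P:2.3} and~\ref{P:2.4} together with Theorem~\ref{T:1.2}; you invoke the same Propositions~\ref{P:2.3} and~\ref{P:2.4} and apply Theorem~\ref{T:1.1} (the harmonic special case of Theorem~\ref{T:1.2}) directly, and you make the appeal to Proposition~\ref{P:2.2} for the $L^q$ ``in particular'' clause explicit, so the two routes coincide.
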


The above Kato's type inequality \eqref{2.7} in Corollary \ref{C:1.2} has to be assumed, as the Kato's type inequality \eqref{2.7} holds for smooth functions, but does not hold for differential forms in general, cf. Remark \ref{R:2.2} for a counter-example of Kato's type inequality.\smallskip
 
In the second part of this paper, we study nonlinear partial differential inequalities for differential forms
\begin{equation}
\label{1.4} \langle\omega, \Delta \omega\rangle \ge 0
\end{equation}
and explore related eigenvalues and eigenforms problems. Recall given a differential operator $\Delta$ on the space of differential forms, an eigenform is a differential form  $\omega$ such that
\begin{equation} \Delta \omega =\lambda \omega\label{1.5}
\end{equation}	
for some constant real number $\lambda$.

Apparently, solutions of \eqref{1.4} can be viewed as generalized harmonic forms as they include
harmonic functions, harmonic forms, eigenfunctions and eigenforms with positive eigenvalues, nonnegative subharmonic functions, nonpositive superharmonic functions, etc. The following results manifest their interconnectedness, and our technique is sufficient general to provide a unified proof of Theorems \ref{T:1.1} and 
\ref{T:1.2}. 

\begin{theorem}\label{T:1.2} Suppose $\omega \in A^k$ has $2$-balanced growth, for $q=2\, ,$ or for $1 < q(\ne2) < 3$ with $\omega$ satisfying $\operatorname{Condition} \operatorname{W}$ \eqref{1.2} $\big (\operatorname{cf}$. \eqref{1.3}$\big )$.
Then 
$\omega$ is a solution of $\langle\omega, \Delta \omega\rangle \ge 0$ on $M$ if and only if $\omega$ is closed and co-closed. Or equivalently,  
\begin{equation}\label{1.6}
\aligned
\qquad \langle\omega, \Delta \omega\rangle \ge 0     \qquad & {\rm if}\quad {\rm and}\quad {\rm only}\quad {\rm if}\qquad \Delta \omega = 0\, \\
& {\rm if}\quad {\rm and}\quad {\rm only}\quad {\rm
if}\qquad d\omega = d^{\star}\omega = 0\,.
\endaligned
\end{equation}

\end{theorem}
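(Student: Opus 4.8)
\medskip
\noindent\emph{Proposed approach.}
The plan is to collapse the triple equivalence in \eqref{1.6} to a single substantive implication, and then to obtain that implication from the same weighted integral estimate that drives Theorem \ref{T:1.1}. Two of the three links are purely formal and I would dispatch them first: if $d\omega = d^{\star}\omega = 0$ then $\Delta\omega = -(dd^{\star}+d^{\star}d)\omega = 0$ by the definition of the Hodge Laplacian, and if $\Delta\omega = 0$ then $\langle\omega,\Delta\omega\rangle = 0 \ge 0$. So everything reduces to showing that, under the growth hypothesis \eqref{1.3}, $\langle\omega,\Delta\omega\rangle \ge 0$ on $M$ already forces $d\omega = d^{\star}\omega = 0$. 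Granting this, the loop $d\omega=d^{\star}\omega=0 \Rightarrow \Delta\omega=0 \Rightarrow \langle\omega,\Delta\omega\rangle\ge 0 \Rightarrow d\omega=d^{\star}\omega=0$ gives \eqref{1.6}.

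For the remaining implication I would fix a Lipschitz cutoff $\phi$ of compact support and use Green's formula $\int_M\langle\Delta\omega,\eta\rangle = -\int_M\langle d\omega,d\eta\rangle - \int_M\langle d^{\star}\omega,d^{\star}\eta\rangle$ with the test form $\eta = \phi^{2}|\omega|^{q-2}\omega$ (which is simply $\phi^{2}\omega$ when $q=2$; when $q<2$ one first replaces $|\omega|^{2}$ by $|\omega|^{2}+\delta$ and lets $\delta\to 0$ at the end). Expanding $d\eta$ and $d^{\star}\eta$ by the product rules $d(f\alpha) = df\wedge\alpha + f\,d\alpha$ and $d^{\star}(f\alpha) = f\,d^{\star}\alpha - \iota_{\nabla f}\alpha$ turns $\int_M\phi^{2}|\omega|^{q-2}\langle\omega,\Delta\omega\rangle$ into $-\int_M\phi^{2}|\omega|^{q-2}\bigl(|d\omega|^{2}+|d^{\star}\omega|^{2}\bigr)$ plus cross terms carrying $\nabla\phi$ and plus the two genuinely new terms
\[\tfrac{q-2}{2}\int_M\phi^{2}|\omega|^{q-4}\langle d(|\omega|^{2})\wedge\omega,\,d\omega\rangle \qquad\text{and}\qquad -\tfrac{q-2}{2}\int_M\phi^{2}|\omega|^{q-4}\langle\iota_{\nabla(|\omega|^{2})}\omega,\,d^{\star}\omega\rangle\]
coming from differentiating $|\omega|^{q-2}$. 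This is exactly where $\operatorname{Condition}\,\operatorname{W}$ \eqref{1.2} is needed: its first line bounds the first new term, and its second line bounds the second (after moving $\iota_{\nabla(|\omega|^{2})}\omega$ and $d^{\star}\omega$ through the Hodge star, using $|\omega|=|\star\omega|$ and $|d^{\star}\omega|=|d\star\omega|$), so that together $|(\text{new terms})| \le |q-2|\int_M\phi^{2}|\omega|^{q-2}\bigl(|d\omega|^{2}+|d^{\star}\omega|^{2}\bigr)$; when $q=2$ these terms vanish outright and Condition $\operatorname{W}$ is not invoked. Estimating the $\nabla\phi$-cross terms by Cauchy--Schwarz as $\varepsilon\int_M\phi^{2}|\omega|^{q-2}(|d\omega|^{2}+|d^{\star}\omega|^{2}) + C_\varepsilon\int_M|\nabla\phi|^{2}|\omega|^{q}$ and using $\langle\omega,\Delta\omega\rangle\ge 0$, one reaches
\[0 \;\le\; -\,(1-\varepsilon-|q-2|)\int_M\phi^{2}|\omega|^{q-2}\bigl(|d\omega|^{2}+|d^{\star}\omega|^{2}\bigr) \;+\; C_\varepsilon\int_M|\nabla\phi|^{2}|\omega|^{q},\]
and the coefficient $1-|q-2|$ is positive precisely for $1<q<3$, which is exactly the range in \eqref{1.3}.

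It then remains to make the right-hand side disappear. Here I would let $\phi$ range over cutoffs $\phi_i$ equal to $1$ on geodesic balls of radius $R_i\uparrow\infty$, vanishing outside $B_{2R_i}$, with $|\nabla\phi_i|\le c/R_i$, chosen to match the $2$-balanced growth of $\omega$ for the relevant exponent $q$: by the definitions of $2$-finite, $2$-mild, $2$-obtuse, $2$-moderate and $2$-small growth (Definitions \ref{D:2.2}, \ref{D:2.3}) one has $\int_M|\nabla\phi_i|^{2}|\omega|^{q}\to 0$ along a suitable subsequence, which is the differential-form counterpart of the function-growth device of \cite{WLW,CW1,CW2,W1}. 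Passing to the limit forces $\int_M|\omega|^{q-2}(|d\omega|^{2}+|d^{\star}\omega|^{2})=0$, hence $d\omega=d^{\star}\omega=0$ wherever $\omega\neq 0$ and, by continuity (or via the $\delta\to0$ passage), on all of $M$; combined with the formal implications above this establishes \eqref{1.6}. The real work, and the step most prone to slips, is the bookkeeping in the middle paragraph: organizing the integration by parts so that every term besides $-\int\phi^{2}|\omega|^{q-2}(|d\omega|^{2}+|d^{\star}\omega|^{2})$ is either an exact divergence (killed by the compact support of $\phi$) or is dominated by $\varepsilon\int\phi^{2}|\omega|^{q-2}(|d\omega|^{2}+|d^{\star}\omega|^{2}) + C_\varepsilon\int|\nabla\phi|^{2}|\omega|^{q}$, and checking that the surviving coefficient $1-\varepsilon-|q-2|$ remains positive. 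Since this is precisely the computation behind Theorem \ref{T:1.1}, only now started from the inequality $\langle\omega,\Delta\omega\rangle\ge 0$ instead of from $\Delta\omega=0$, it delivers the promised unified proof of Theorems \ref{T:1.1} and \ref{T:1.2}.
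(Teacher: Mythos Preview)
Your integration-by-parts set-up and your treatment of $\operatorname{Condition}\,\operatorname{W}$ are exactly right and match the paper's calculation in Theorem \ref{T:3.1}: the test form $\phi^{2}(|\omega|^{2}+\delta)^{(q-2)/2}\omega$, Lemma \ref{L:3.1} for $d^{\star}(f\omega)$, and the fact that the two ``new'' terms carry the factor $\tfrac{q}{2}-1$ (hence disappear when $q=2$ and otherwise are controlled by \eqref{1.2}) are precisely the engine of the argument. The reduction to the single implication $\langle\omega,\Delta\omega\rangle\ge 0\Rightarrow d\omega=d^{\star}\omega=0$ is also correct.

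The gap is in your last step. Your additive Cauchy--Schwarz produces
\[
(1-\varepsilon-|q-2|)\int_M\phi^{2}|\omega|^{q-2}\bigl(|d\omega|^{2}+|d^{\star}\omega|^{2}\bigr)\ \le\ C_{\varepsilon}\int_M|\nabla\phi|^{2}|\omega|^{q},
\]
and you then assert that $2$-balanced growth lets you choose $\phi_i$ with $\int_M|\nabla\phi_i|^{2}|\omega|^{q}\to 0$. That is not what the $2$-balanced conditions say. For instance, $2$-\emph{finite} growth only gives $\liminf_{r\to\infty}r^{-2}\int_{B(r)}|\omega|^{q}<\infty$, i.e.\ a bounded subsequence of the right-hand side, not one tending to zero; the same issue arises for $2$-moderate, $2$-mild, $2$-obtuse and $2$-small. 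With only boundedness, your inequality yields $\int_M|\omega|^{q-2}(|d\omega|^{2}+|d^{\star}\omega|^{2})<\infty$, not $=0$.

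The paper avoids this by keeping the Cauchy--Schwarz step \emph{multiplicative} (see \eqref{1.17}--\eqref{1.19}): one obtains
\[
Q_{j+1}^{2}\ \le\ C\cdot\Bigl(\tfrac{1}{(r_{j+1}-r_j)^{2}}\!\int_{B(r_{j+1})\setminus B(r_j)}|\omega|^{q}\Bigr)\cdot\bigl(Q_{j+1}-Q_j\bigr),
\]
where the second factor on the right is the \emph{annular} piece of the left-hand side. This telescoping structure is what makes the growth hypotheses bite: for $2$-finite growth one sums to get $\sum_j Q_{j+1}^{2}<\infty$ and hence $Q_{j+1}\to 0$ (Theorem \ref{T:3.1}); for each of the remaining four growth types a separate short argument (Theorems \ref{T:3.2}--\ref{T:3.5}) exploits the same squared inequality together with the specific divergence condition \eqref{2.2}, \eqref{2.3}, \eqref{2.4} or \eqref{2.6}. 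So the fix is: replace the $\varepsilon$-splitting by the multiplicative Cauchy--Schwarz that retains the annular factor, and then run a case-by-case argument for each of the five growth types rather than appealing to a single ``right-hand side $\to 0$'' claim.
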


\begin{corollary} \label{C:1.3} {\rm i)} Let $\omega$ be an $L^2$ 
differential $k$-form on $M$. Then  
\eqref{1.6} holds.
\begin{enumerate}
\item[{\rm ii)}] Let $\omega$ be an $L^q, 1<q(\ne 2)<3\, $ 
differential $k$-form on $M$  satisfying
$\operatorname{Condition}\, \operatorname{W}\, $ \eqref{1.2}. Then  \eqref{1.6} holds.
\item[{\rm iii)}] Let  $\omega$ be an $L^q, 1 < q < 3$ differential $0$-form on $M$. Then $\omega \Delta \omega \ge 0 ($e.g. $\omega$ is a nonnegative subharmonic function$)$ if and only if $\omega$ is constant on $M\, .$ 
\end{enumerate}
\end{corollary}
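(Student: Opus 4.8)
The plan is to obtain Corollary \ref{C:1.3} as a direct consequence of Theorem \ref{T:1.2}, by checking in each case that the growth hypothesis \eqref{1.3} holds. For (i), an $L^2$ differential $k$-form has \emph{$2$-finite}, \emph{$2$-mild}, \emph{$2$-obtuse}, \emph{$2$-moderate}, and \emph{$2$-small} growth for $q=2$ by Proposition \ref{P:2.2}, and hence \emph{$2$-balanced growth} for $q=2$ by Definition \ref{D:2.3}; this is exactly the first alternative of \eqref{1.3}, so Theorem \ref{T:1.2} applies and gives \eqref{1.6}. For (ii), the same Proposition \ref{P:2.2} (in its $L^q$ form) shows that an $L^q$ form, $1<q<3$, has \emph{$2$-finite}, hence \emph{$2$-balanced}, growth for that same value of $q$; combined with the standing assumption that $\omega$ satisfies $\operatorname{Condition}\,\operatorname{W}\,\eqref{1.2}$, this realizes the second alternative of \eqref{1.3} when $1<q(\ne2)<3$, and Theorem \ref{T:1.2} again yields \eqref{1.6}.

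For (iii) I would first record that every smooth $0$-form automatically satisfies $\operatorname{Condition}\,\operatorname{W}\,\eqref{1.2}$: when $\omega$ is a function, $d(|\omega|^2)\wedge\omega = 2|\omega|^2\,d\omega$, so the first inequality in \eqref{1.2} is an equality, while $\star\omega$ is an $n$-form so $d\star\omega=0$ and the second inequality is trivial. Thus (i) and (ii) together cover all $1<q<3$, so \eqref{1.6} holds for $\omega$: namely $\omega\,\Delta\omega\ge 0$ if and only if $d\omega=d^{\star}\omega=0$. Since $d^{\star}\omega=0$ holds automatically for a $0$-form, this reduces to $d\omega=0$, i.e.\ $\omega$ is locally constant, hence constant on the connected manifold $M$; conversely a constant function satisfies $\omega\,\Delta\omega=0\ge 0$. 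In particular a nonnegative subharmonic function ($\omega\ge0$, $\Delta\omega\ge0$, whence $\omega\,\Delta\omega\ge0$) must be constant.

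The argument is essentially bookkeeping and I anticipate no genuine obstacle: all the analytic content is carried by Theorem \ref{T:1.2}, and the two ingredients used here — that $L^q$ membership for $1<q<3$ forces \emph{$2$-balanced} growth, and that $0$-forms satisfy $\operatorname{Condition}\,\operatorname{W}$ — are established earlier in the paper. The only point meriting a line of care is the passage from $d\omega=0$ to "$\omega$ constant," which invokes connectedness of $M$.
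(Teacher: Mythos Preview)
Your proposal is correct and follows essentially the same route as the paper: both invoke Proposition~\ref{P:2.2} to verify the $2$-balanced growth hypothesis \eqref{1.3} and then apply Theorem~\ref{T:1.2}, with part (iii) handled by noting that $0$-forms automatically satisfy $\operatorname{Condition}\,\operatorname{W}$ and are co-closed, so closedness forces constancy. Your write-up is slightly more detailed (e.g.\ the explicit check of \eqref{1.2} for functions and the remark on connectedness), but the argument is the same.
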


Analogously,  

\begin{corollary}\label{C:1.4} Let $\omega$ be a simple $k$-form or a differential $k$-form satisfying Kato's type inequality \eqref{2.7} on $M$. If $\omega$ has $2$-balanced growth for $\it {1 < q < 3}$, then \eqref{1.6} holds.
In particular, for every $L^q\, ,$ $\it {1 < q < 3}\, $ simple $k$-form or a differential $k$-form $\omega$ satisfying \eqref{2.7} on $M\, ,$  \eqref{1.6} holds.
\end{corollary}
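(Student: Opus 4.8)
The plan is to derive Corollary \ref{C:1.4} as a direct consequence of Theorem \ref{T:1.2}, via the intermediate result Corollary \ref{C:1.2}, by showing that a simple $k$-form, or any $k$-form satisfying Kato's type inequality \eqref{2.7}, automatically satisfies $\operatorname{Condition}\, \operatorname{W}\, \eqref{1.2}$. Thus the whole content of the corollary rests on the implication \emph{(Kato's type inequality or simplicity)} $\Longrightarrow$ \emph{Condition W}, which is exactly what Propositions \ref{P:2.3} and \ref{P:2.4} are asserting (they were cited for the analogous Corollary \ref{C:1.2}). Once that is in hand, both assertions of the corollary follow by substitution.

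\medskip

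First I would spell out the reduction. Let $\omega$ be a $k$-form on $M$ that is either simple or satisfies Kato's type inequality \eqref{2.7}. By Proposition \ref{P:2.3} (for the simple case) or Proposition \ref{P:2.4} (for the Kato case), $\omega$ satisfies the first line of $\operatorname{Condition}\, \operatorname{W}\, \eqref{1.2}$, namely $|\langle d(|\omega|^2)\wedge\omega, d\omega\rangle|\le 2|\omega|^2|d\omega|^2$. Next I would invoke the hidden duality of Proposition \ref{P:2.5}: to obtain the second line of \eqref{1.2} it suffices to know that $\star\,\omega$ also falls under the hypotheses of Proposition \ref{P:2.3} or \ref{P:2.4}. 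For a simple $k$-form this is immediate, since the Hodge star of a simple (decomposable) form is again simple; for the Kato case one needs that \eqref{2.7} is preserved under $\star$, which again follows because $|\star\,\omega|=|\omega|$ and the Hodge star is a pointwise isometry commuting appropriately with covariant differentiation. Hence $\omega$ satisfies the full $\operatorname{Condition}\, \operatorname{W}\, \eqref{1.2}$.

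\medskip

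With $\operatorname{Condition}\, \operatorname{W}\, \eqref{1.2}$ verified, I would now apply Theorem \ref{T:1.2} directly. The hypothesis of the corollary is that $\omega$ has $2$-balanced growth for $1<q<3$. If $q=2$ this is the first alternative of \eqref{1.3}; if $1<q(\ne 2)<3$ then, since we have just shown $\omega$ satisfies $\operatorname{Condition}\, \operatorname{W}\, \eqref{1.2}$, this is precisely the second alternative of \eqref{1.3}. In either case Theorem \ref{T:1.2} applies and yields \eqref{1.6}: $\langle\omega,\Delta\omega\rangle\ge 0$ if and only if $\Delta\omega=0$ if and only if $d\omega=d^\star\omega=0$. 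For the final ``in particular'' clause, recall (as already used in the discussion after Theorem \ref{T:1.1}, via Proposition \ref{P:2.2}) that an $L^q$ form has $q$-finite, hence $q$-balanced, growth for that value of $q$; so every $L^q$, $1<q<3$, simple or Kato-type $k$-form meets the hypotheses, and \eqref{1.6} holds for it.

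\medskip

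I do not expect a genuine obstacle here: the corollary is a packaging of Theorem \ref{T:1.2} together with Propositions \ref{P:2.3}, \ref{P:2.4}, and \ref{P:2.5}, all of which are available. The only point requiring a line of care is confirming that the class ``simple form'' and the class ``form satisfying \eqref{2.7}'' are each closed under the Hodge star operator, so that both lines of $\operatorname{Condition}\, \operatorname{W}\, \eqref{1.2}$ are secured rather than just the first; this is where I would be most careful to cite the appropriate clause of Propositions \ref{P:2.3}--\ref{P:2.5} rather than re-derive it.
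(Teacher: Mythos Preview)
Your proposal is correct and follows essentially the same route as the paper: the paper's proof simply cites Propositions \ref{P:2.3} and \ref{P:2.4} together with Theorem \ref{T:1.2}, and your argument does exactly this (adding Proposition \ref{P:2.2} for the $L^q$ clause, as intended). Your extra paragraph invoking Proposition \ref{P:2.5} to secure the second line of \eqref{1.2} is superfluous, since Propositions \ref{P:2.3} and \ref{P:2.4} are already stated to yield the \emph{full} Condition~W (both lines); in particular, the proof of Proposition \ref{P:2.3} verifies both inequalities of \eqref{1.2} directly for simple forms, so no separate duality step is needed.
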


The duality in $\operatorname{Condition} \operatorname{W}\, $ \eqref{1.2} leads to Duality Theorem \ref{T:1.3} for Theorem \ref{T:1.2}.  

\begin{theorem}[Duality Theorem]\label{T:1.3} If $\omega \in A^k$ has $2$-balanced growth on $M$, for $q=2\, ,$ or for $1 < q(\ne2) < 3$ with $\omega$ satisfying $\operatorname{Condition} \operatorname{W}$ \eqref{1.2} $\big (\operatorname{cf}$. \eqref{1.3}$\big )$,
then so is $\star\, \omega \in A^{n-k}\, ,$ and  
$\star\, \omega$ is a solution of $\langle \star\, \omega, \Delta \star\, \omega\rangle \ge 0$ on $M$ if and only if $\star\, \omega$ is closed and co-closed. Or equivalently,  
\begin{equation}\label{1.7}
\aligned
\qquad \langle \star\, \omega, \Delta \star\, \omega\rangle \ge 0     \qquad & {\rm if}\quad {\rm and}\quad {\rm only}\quad {\rm if}\qquad \Delta \star\, \omega = 0\, \\
& {\rm if}\quad {\rm and}\quad {\rm only}\quad {\rm
if}\qquad d\star\, \omega = d^{\star} \star\, \omega = 0\,.
\endaligned
\end{equation}
\end{theorem}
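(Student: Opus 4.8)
The plan is to deduce the Duality Theorem from Theorem~\ref{T:1.2} applied to the form $\star\,\omega\in A^{n-k}$, using the ``hidden duality'' already recorded in the excerpt. First I would invoke Proposition~\ref{P:2.5}: $\omega\in A^k$ satisfies $\operatorname{Condition}\operatorname{W}$ \eqref{1.2} if and only if $\star\,\omega\in A^{n-k}$ does. Since the defining inequalities \eqref{1.2} are manifestly symmetric under $\omega\leftrightarrow\star\,\omega$ (the second line for $\omega$ is exactly the first line for $\star\,\omega$, up to the identity $\star\,\star=\pm\,\mathrm{id}$ which preserves norms), this equivalence is immediate and I would just cite Proposition~\ref{P:2.5}. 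The point is that the hypothesis imposed on $\omega$ in \eqref{1.3} transfers verbatim to $\star\,\omega$ whenever it is needed, i.e. in the range $1<q(\ne 2)<3$; for $q=2$ no such condition is present, so nothing needs checking.

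Next I would verify that $2$-balanced growth is preserved by the Hodge star. Because $\star$ is a pointwise linear isometry on the relevant tensor bundles, $|\star\,\omega|=|\omega|$ at every point of $M$; hence for any fixed exhaustion the growth integrals $\int_{B(R)}|\star\,\omega|^q$ and $\int_{B(R)}|\omega|^q$ coincide, and likewise all the auxiliary quantities entering the definitions of \emph{$2$-finite}, \emph{$2$-mild}, \emph{$2$-obtuse}, \emph{$2$-moderate}, \emph{$2$-small} growth (Definition~\ref{D:2.2}) are unchanged. Therefore $\omega$ has $2$-balanced growth for the value of $q$ in \eqref{1.3} if and only if $\star\,\omega$ does. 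This establishes the clause ``then so is $\star\,\omega\in A^{n-k}$.''

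With both hypotheses transferred, I would apply Theorem~\ref{T:1.2} directly to the form $\eta:=\star\,\omega\in A^{n-k}$: since $\eta$ has $2$-balanced growth for $q=2$, or for $1<q(\ne2)<3$ with $\eta$ satisfying $\operatorname{Condition}\operatorname{W}$ \eqref{1.2}, Theorem~\ref{T:1.2} gives that $\eta$ solves $\langle\eta,\Delta\eta\rangle\ge 0$ on $M$ iff $\Delta\eta=0$ iff $d\eta=d^{\star}\eta=0$. Rewriting $\eta=\star\,\omega$ yields exactly the chain of equivalences \eqref{1.7}. This completes the proof.

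I do not anticipate a serious obstacle: the theorem is essentially a restatement of Theorem~\ref{T:1.2} through the star operator, and the only substantive inputs---isometry of $\star$ and the duality of $\operatorname{Condition}\operatorname{W}$---are either elementary or already proved (Proposition~\ref{P:2.5}). The one point meriting a line of care is that the \emph{same} value of $q$ must be used throughout (the definitions of growth in \cite{WLW} fix $q$), so that the equivalence of growth types for $\omega$ and $\star\,\omega$ is being asserted for a single $q$; this is clear since $|\star\,\omega|=|\omega|$ makes the corresponding integrals literally equal for each $q$ simultaneously.
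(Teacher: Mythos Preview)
Your proposal is correct and follows essentially the same approach as the paper: the paper's proof also cites Proposition~\ref{P:2.5} together with the identity $|\omega|^2=|\star\,\omega|^2$ to transfer both $\operatorname{Condition}\,\operatorname{W}$ and the $2$-balanced growth hypothesis to $\star\,\omega$, and then applies Theorem~\ref{T:1.2} directly to $\star\,\omega$.
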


Striking the delicate balance between Dual Theorems \ref{T:1.2} and \ref{T:1.3}, we obtain \smallskip

\noindent
{\bf Theorem \ref{T:3.10}.} (Unity Theorem)
{\it Under the same growth assumption on $\omega\, $ $($as in Theorem \ref{T:1.1}, or \ref{T:1.2}, or \ref{T:1.3} $)$, the following six statements are equivalent.}

\noindent
$($i$)$ $\langle\omega, \Delta \omega\rangle \ge 0$.\\
$($ii$)$ $\Delta \omega = 0$.\\
$($iii$)$ $d\, \omega = d^{\star}\omega = 0$.\\
$($iv$)$ $\langle \star\, \omega, \Delta \star\, \omega\rangle \ge 0$.\\
$($v$)$ $\Delta \star\, \omega = 0\, .$\\
$($vi$)$ $d\star\, \omega = d^{\star} \star\, \omega = 0$.
\smallskip

\noindent
{\bf Theorem \ref{T:4.1}.} (Nonexistence of eigenforms associated with positive eigenvalues)
{\it Under the assumption of Theorem \ref{T:1.2}, there does not exist an eigenform $\omega$ satisfying \eqref{1.5} $\, \Delta \omega =\lambda \omega$ associated with any eigenvalue $\lambda > 0$.
}
\smallskip

\noindent
{\bf Theorem \ref{T:4.2}.} (Nonexistence of solution of $ \langle\omega, \Delta \omega\rangle > 0\, $)
{\it Under the assumption of Theorem \ref{T:1.2}, there does not exist a solution of $ \langle\omega, \Delta \omega\rangle > 0\, .$
}
\smallskip

\noindent
{\bf Theorem \ref{T:4.3}.} (Vanishing Theorem)
{\it Let
$\omega$ be a solution of $ \langle\omega, \Delta \omega\rangle > 0\, $ and satisfy Kato's type inequality \eqref{2.7} on $M\, .$ Suppose $M$ has the volume growth
\begin{equation}\label{1.8}
\underset{r \to \infty}{\liminf} \frac{1}{r^2}\operatorname{Vol}(B(x_0;r)) = \infty\, \operatorname{and}\,
\int^\infty_a\bigg( \operatorname{Vol}(\partial
B(x_0;r))\bigg)^{-1}dr <
 \infty \, ,
 \end{equation}
for every $x_0\in M$ and every $a>0\, .$ 
If $\omega$ has 2-balanced growth for $1 < q < 3\, ,$ then $\omega \equiv 0\, $
on $M\, .$
}\smallskip

Just as a warping function (cf. \cite {CW1,CW2}), or a function $u$ satisfying $u \Delta u \ge 0$ has a dichotomy between constancy $( u \equiv$ constant $)$ and ``infinity" $($ e.g., $\infty = \liminf_{r\rightarrow\infty}\frac{1}{r^2}\int_{B(x_0;r)}|u|^{q}\, dv$ or more generally, $u$ is $2$-imbalanced, for $q > 1)$, so does a differential form $\omega$ satisfy $\langle\omega, \Delta \omega\rangle \ge 0$ on $M$ 
have the following dichotomy. 
\smallskip

\noindent
{\bf Theorem \ref{T:5.1}.} {\it Let $\omega \in A^k$ be a solution of $\langle\omega, \Delta \omega\rangle \ge 0\, ,$ satisfying $
\operatorname{Condition}\,$ $\operatorname{W}\, \eqref{1.2}.$
Then either $(i)\, \omega$ is both closed and co-closed, or $(ii)\, \omega$ has $2$-imbalanced growth for $1 < q < 3\, .$}
\smallskip

When $\omega$ is a function or $0$-form, $\omega$ is automatically co-closed and satisfies $\operatorname{Condition}\, $ $\operatorname{W}\, \eqref{1.2}$. That $\omega$ is closed implies $\omega$ is constant. We recapture the  dichotomy of function $u$ satisfying $u \Delta u \ge 0\, ,$ between constancy and ``infinity". \smallskip

The following theorem is a dual version of the above dichotomy theorem and yields information on the growth of a nontrivial solution of \r{1.4}. $($Recall $A^k$ denotes the space of smooth differential $k$-forms on $M\, ,$ as introduced prior to Definition \ref{D:1.1}.$)$ 
\smallskip

\noindent
{\bf Theorem \ref{T:6.1}.} {\it Let $\omega \in A^k$ be a solution of $\langle\omega, \Delta \omega\rangle \ge 0$ with either $d\omega\neq 0\, $ or $d^{*}\omega\neq 0$ on $M\, .$ Then  $\omega$ has $2$-imbalanced growth for $q=2\, ,$ and if  $\omega$ satisfies $\operatorname{Condition} \operatorname{W}\, \eqref{1.2},$ then $\omega$
has $2$-imbalanced growth for $1 < q(\ne2) < 3\, .$}
\smallskip

In the last part of this paper, we discuss applications of these results. Let $A^{k}(\xi)=\Gamma(\wedge^{k}T^{*}M\otimes E)$ denote the space
of smooth differential $k$-forms on $M$ with values in the Riemannian vector bundle $\xi: E \to M$ over $M\, .$ Note that if $E$ is the trivial bundle $M \times \mathbb R$ equipped with the canonical metric, then $A^{k}$ is isometric to $A^{k}(\xi)\, ,  d = d^\nabla \big (\operatorname{as}\, \operatorname{in}\, \eqref{7.1} \big )\, ,$ and $d^{*} = \delta^\nabla\, \big (\operatorname{as}\, \operatorname{in}\, \eqref{7.2} \big )\, .$ We observe that once a differential $k$-form $\omega\in A^{k}$ is found to be both closed and co-closed on $M$, then $\omega$ satisfies a conservation law \eqref{7.5} and is ready to apply the theory we developed in \cite {DW} and \cite {W3}. Based on a variational method, conservation law, comparison theorems in Riemannian geometry under a curvature  assumption \eqref{7.6} and coarea formula, we solve constant Dirichlet problems for harmonic $1$-forms and harmonic maps on starlike domains in $M$ (cf. Theorem \ref{T:9.3} and 
Corollary \ref{C:9.1},  generalizing and extending the work of Karcher and Wood for
harmonic maps on disc domains in $\mathbb R^n$ \cite{KW}) and derive monotonicity formulas for $\omega\, $ (cf. Theorems \ref{T:7.2}).
We further observed that when $\omega$ has $2$-moderate growth $\big ($$\eqref{2.4}, p=2 \big )$ in which the auxiliary function $\psi$ is monotone decreasing or under a more general condition \eqref{8.3}, we obtain $o(\rho^\lambda)$ growth estimate on the $L^2$ norm of $\omega$ over the geodesic ball $B(\rho)\, ,$ as $\rho \to \infty$ (cf. Theorems \ref{T:8.1}). In contrast to a ``microscopic" viewpoint in geometric measure theory, this growth estimate provides a ``macroscopic" viewpoint of monotonicity formula, and yields a vanishing theorem for a solution $\omega\, $ of $\langle\omega, \Delta \omega\rangle \ge 0 $ on $M\, $ (cf. Theorems \ref{T:8.3}). 
\smallskip

The author wishes to thank the referee for his comments and suggestions which helped the author prepared for the final version of this paper. 

\section{Preliminary}
We denote the volume element of $n$-manifold $M$ by $dv\, ,$
the geodesic ball of radius $r$ centered at $x_0$ in $M$ by 
$B(x_0;r)\, $ or $B(r)\, ,$ and its boundary by $\partial
B(x_0;r)\, $ or $\partial B(r)\, .$ 
Let $d: A^{k}\rightarrow
A^{k+1}$ be the exterior differential operator and
$d^{*}$ be the codifferential operator $d^{*}:
A^{k}\rightarrow A^{k-1}$ given by
\[
d^{*}=(-1)^{nk+n+1}\star d\, \, \star
\]
In particular, if $\omega\in
A^{1}$, $d^{*}$ is defined by $d^{*}\omega=-\operatorname
{trace} \nabla \omega=- \operatorname {div}\, \omega$. The Hodge Laplacian
$\Delta$ is defined on the differential forms by
\[
\Delta=-(dd^{*}+d^{*}d): A^{k}\rightarrow
A^{k}
\]
Thus, by our convention, on the space of smooth real-valued
functions $f$ on $M\, ,$ the Hodge Laplacian agrees with the
connection Laplacian, or the Laplace-Beltrami operator; that is,
$$\Delta f = - d^{*}d f = \operatorname {trace} \nabla df = \operatorname {div}\,(\nabla f)\,
.$$
\begin{definition}\label{D:2.1} A differential $k$-form $\omega$ is said to be {\it harmonic} if $\Delta \omega =0\, ,$ {\it closed} if $d\omega=0\,
,$ and {\it co-closed} if $d^{*}\omega=0\, .$
\end{definition}

It follows from the Stokes' Theorem that on a compact Riemannian
manifold, \eqref{1.1} holds. We recall the following definitions from
\cite{WLW}:

\begin{definition}\label {D:2.2} A function or a differential form $f$ has \emph{$p$-{finite growth}} $($or, simply, \emph{is
$p$-{finite}}$)$ if there exists $x_0 \in M$ such that
\begin{equation}
\liminf_{r\rightarrow\infty}\frac{1}{r^p}\int_{B(x_0;r)}|f|^{q}\, dv
<\infty \, \label{2.1}
\end{equation}
and has \emph{$p$-{infinite growth}} $($or, simply, \emph{is
$p$-infinite}$)$ otherwise. \smallskip

A function or a differential form $f$ has \emph{$p$-mild growth}
$($or, simply, \emph{is $p$-mild}$)$ if there exist  $ x_0 \in M\,
,$ and a strictly increasing sequence of $\{r_j\}^\infty_0$ going
to infinity, such that for every $l_0>0$, we have
\begin{equation}
\begin{array}{rll}
\sum\limits_{j=\ell_0}^{\infty}
\bigg(\frac{(r_{j+1}-r_j)^p}{\int_{B(x_0;r_{j+1})\backslash
B(x_0;r_{j})}|f|^q\, dv}\bigg)^{\frac1{p-1}}=\infty \, ,
\end{array}    \label{2.2}
\end{equation}
and has \emph{$p$-severe growth} $($or, simply, \emph{is
$p$-severe}$)$ otherwise. \smallskip

A function or a differential form $f$ has \emph{$p$-obtuse growth}
$($or, simply, \emph{is $p$-obtuse}$)$ if there exists $x_0 \in M$
such that for every $a>0$, we have
\begin{equation}
\begin{array}{rll}
\int^\infty_a\bigg( \frac{1}{\int_{\partial
B(x_0;r)}|f|^qds}\bigg)^\frac{1}{p-1}dr =
 \infty \, ,   \label{2.3}
\end{array}
\end{equation}
and has \emph{$p$-acute growth} $($or, simply, \emph{is
$p$-acute}$)$ otherwise. \smallskip

A function or a differential form $f$ has \emph{$p$-moderate
growth} $($or, simply, \emph{is $p$-moderate}$)$ if there exist  $
x_0 \in M$, and $\psi(r)\in {\mathcal F}$, such that
\begin{equation}\label{2.4}
\limsup _{r \to \infty}\frac {1}{r^p \psi^{p-1} (r)}\int_{B(x_0;r)}
|f|^{q}\, dv < \infty \, ,
\end{equation}
and has \emph{$p$-immoderate growth} $($or, simply, \emph{is
$p$-immoderate}$)$ otherwise, where
\begin{equation}\label{2.5} {\mathcal F} = \{\psi:[a,\infty)\longrightarrow
(0,\infty) |\int^{\infty}_{a}\frac{dr}{r\psi(r)}= \infty \ \ for \ \
some \ \ a \ge 0 \}\, .\end{equation} $($Notice that the functions
in {$\mathcal F$} are not necessarily monotone.$)$ \smallskip

A function or a differential form $f$ has \emph{$p$-small growth}
$($or, simply, \emph{is $p$-small}$)$ if there exists $ x_0 \in
M\, ,$ such that for every $a
>0\, ,$ we have
\begin{equation}
\begin{array}{rll}
\int
_{a}^{\infty}\bigg(\frac{r}{\int_{B(x_0;r)}|f|^{q}\, dv}\bigg)^{\frac1{p-1}}
dr = \infty \, ,
\end{array}    \label{2.6}
\end{equation}
and has \emph{$p$-large growth} $($or, simply, \emph{is
$p$-large}$)$ otherwise. \end{definition}

\begin{definition}\label {D:2.3} A function $f$ has
\emph{$p$-balanced growth} $(or, simply, \emph{is $p$-balanced})$
if $f$ has one of the following: \emph{$p$-finite},
\emph{$p$-mild}, \emph{$p$-obtuse}, \emph{$p$-moderate}, or
\emph{$p$-small} growth, and has \emph{$p$-imbalanced growth} $( $
or, simply,
is $p$-\emph{imbalanced}$)$ otherwise.
\end{definition}

The above definitions of ``$p$-balanced, $p$-finite, $p$-mild,
$p$-obtuse, $p$-moderate, $p$-small" and their counter-parts
``$p$-imbalanced, $p$-infinite, $p$-severe, $p$-acute,
$p$-immoderate, $p$-large" growth depend on $q$, and $q$ will be
specified in the context in which the definition is used.
\begin{proposition}$(\operatorname{cf.}$\cite{W1,W4}$)$\label{P:2.1}
For a given $q \in \mathbb R\, ,$ $f$ is
\[
\begin{aligned}
& p-moderate\, \r{2.4}\quad  \Leftrightarrow \quad p-small\, \r{2.5}\quad \Rightarrow \quad p-
mild\, \r{2.2}\quad \Rightarrow \quad p-obtuse\, \r{2.3} \\
& \operatorname{or}\quad \operatorname{equiavalently},\\
& p-acute \quad   \Rightarrow \quad p-severe \quad \Rightarrow \quad p-large \quad \Leftrightarrow \quad p-immoderate.
\end{aligned}
\]
Hence, for a given $q\, ,$ $f$ is 
\[
\begin{aligned}
p-\operatorname{balanced}\quad
& \Rightarrow \quad \operatorname{either} \quad p-\operatorname{finite}\, \r{2.1}\quad \operatorname{or}\quad p-\operatorname{obtuse}\\
p-\operatorname{imbalanced}\quad
& \Rightarrow \quad \operatorname{both} \quad p-\operatorname{infinite}\quad \operatorname{and}\quad p-\operatorname{immoderate}.\end{aligned}
\]
If in addition, $\int_{B(x_0;r)}|f|^{q}dv$
 is convex in $r$, then $f$ is \emph{$p$-mild}, \emph{$p$-obtuse}, \emph{$p$-moderate}, and
\emph{$p$-small} $(\operatorname{resp.}$ 
\emph{$p$-severe}, \emph{$p$-acute}, \emph{$p$-immoderate}, and 
\emph{$p$-large}$)$ are all equivalent.
\end{proposition}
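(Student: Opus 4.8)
The plan is to translate the growth conditions \eqref{2.1}--\eqref{2.6} into elementary statements about the nondecreasing functions $I(r)=\int_{B(x_0;r)}|f|^{q}\,dv$ and $J(r)=\int_{\partial B(x_0;r)}|f|^{q}\,ds$, which by the coarea formula satisfy $I(r)=\int_{0}^{r}J(s)\,ds$, so $I$ is absolutely continuous with $I'=J$ almost everywhere. It then suffices to establish the chain
\[
p\text{-moderate}\ \Longleftrightarrow\ p\text{-small}\ \Longrightarrow\ p\text{-mild}\ \Longrightarrow\ p\text{-obtuse},
\]
whose contrapositive is the asserted chain $p\text{-acute}\Rightarrow p\text{-severe}\Rightarrow p\text{-large}\Leftrightarrow p\text{-immoderate}$; the two ``hence'' clauses then follow by unwinding Definition \ref{D:2.3}, and the convexity statement needs one further implication. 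Throughout $p>1$, so $\alpha:=1/(p-1)>0$ and $t\mapsto t^{-\alpha}$ is convex and decreasing on $(0,\infty)$.

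The two easy links come first. For $p$-small $\Rightarrow$ $p$-moderate I would take $\psi(r):=\bigl(I(r)/r^{p}\bigr)^{\alpha}$: then $I(r)/\bigl(r^{p}\psi^{p-1}(r)\bigr)\equiv 1$, so \eqref{2.4} holds, and $\int_{a}^{\infty}\frac{dr}{r\psi(r)}=\int_{a}^{\infty}\bigl(r/I(r)\bigr)^{\alpha}\,dr=\infty$, so $\psi\in\mathcal F$; the converse is the one-line estimate $I\le Cr^{p}\psi^{p-1}\Rightarrow(r/I)^{\alpha}\ge C^{-\alpha}/(r\psi)$. For $p$-mild $\Rightarrow$ $p$-obtuse I would apply Jensen's inequality to the convex function $t\mapsto t^{-\alpha}$ on each interval $[r_{j},r_{j+1}]$ of the sequence realizing \eqref{2.2}, with the probability measure $(r_{j+1}-r_{j})^{-1}\,dr$ and $\int_{r_{j}}^{r_{j+1}}J=I(r_{j+1})-I(r_{j})=:\Delta I_{j}$, to get
\[
\Bigl(\tfrac{(r_{j+1}-r_{j})^{p}}{\Delta I_{j}}\Bigr)^{\alpha}\ \le\ \int_{r_{j}}^{r_{j+1}}J(r)^{-\alpha}\,dr ;
\]
summing over $j\ge\ell_{0}$ and invoking \eqref{2.2} forces \eqref{2.3} for every $a>0$.

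The crux is $p$-small $\Rightarrow$ $p$-mild. The tempting detour through $p$-moderate fails, because the $\psi\in\mathcal F$ produced above need not be monotone, so $\int\frac{dr}{r\psi}$ cannot be dominated by a sum $\sum 1/\psi(r_{j})$ term by term. Instead I would fix $a>0$ and test \eqref{2.2} with the dyadic exhaustion $r_{j}=2^{j}a$. On $[r_{j},r_{j+1}]$ one has $r\le 2^{j+1}a$ and $I(r)\ge I(2^{j}a)$, which gives the crude bound $\int_{r_{j}}^{r_{j+1}}\bigl(r/I(r)\bigr)^{\alpha}\,dr\le c(a,p)\,2^{jp\alpha}\,I(2^{j}a)^{-\alpha}$; the telescoping inequality $I(2^{j}a)\ge I(2^{j}a)-I(2^{j-1}a)=\Delta I_{j-1}$ then lets me bound $\int_{a}^{\infty}\bigl(r/I(r)\bigr)^{\alpha}\,dr$ by a finite quantity plus a constant multiple of $\sum_{i}\bigl((r_{i+1}-r_{i})^{p}/\Delta I_{i}\bigr)^{\alpha}$. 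Hence, if the $p$-mild sum were summable from some $\ell_{0}$, then $\int_{a}^{\infty}\bigl(r/I(r)\bigr)^{\alpha}\,dr<\infty$, contradicting $p$-smallness; so that sum diverges for every $\ell_{0}$ and $f$ is $p$-mild. I expect this dyadic-and-telescoping estimate to be the main obstacle; the rest is bookkeeping, and the finitely many radii at which $f$ vanishes on a whole ball or sphere are harmless, since there $p$-small holds trivially and only tails of the sum and of the integral are being compared.

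It remains to harvest the consequences. By Definition \ref{D:2.3}, $f$ is $p$-balanced iff it has one of $p$-finite, $p$-mild, $p$-obtuse, $p$-moderate, $p$-small growth, and the chain shows the last four all imply $p$-obtuse, so $p$-balanced $\Rightarrow$ $p$-finite or $p$-obtuse; taking contrapositives of the five implications gives $p$-imbalanced $\Rightarrow$ $p$-infinite, $p$-severe, $p$-acute, $p$-immoderate, $p$-large, in particular $p$-infinite and $p$-immoderate. Finally, if $I(r)$ is convex then $J=I'$ is nondecreasing, so $I(r)=\int_{0}^{r}J(s)\,ds\le rJ(r)$ and hence $\bigl(r/I(r)\bigr)^{\alpha}\ge J(r)^{-\alpha}$, i.e. $p$-obtuse $\Rightarrow$ $p$-small; combined with $p$-small $\Rightarrow$ $p$-mild $\Rightarrow$ $p$-obtuse and $p$-moderate $\Leftrightarrow$ $p$-small, this makes $p$-mild, $p$-obtuse, $p$-moderate, $p$-small mutually equivalent, and by contraposition so are $p$-severe, $p$-acute, $p$-immoderate, $p$-large.
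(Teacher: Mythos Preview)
Your proposal is correct. The paper does not actually prove Proposition~\ref{P:2.1}: its entire proof is the sentence ``Please see \cite{W4}, Theorem~5.3,'' deferring to an external reference. Hence there is no in-paper argument to compare with. Your self-contained route---coarea to reduce to the monotone primitive $I(r)$, the substitution $\psi=(I/r^{p})^{1/(p-1)}$ for $p$-moderate $\Leftrightarrow$ $p$-small, Jensen for $p$-mild $\Rightarrow$ $p$-obtuse, the dyadic-telescoping estimate for $p$-small $\Rightarrow$ $p$-mild, and the convexity bound $I(r)\le rJ(r)$ to close the loop---is a clean and elementary way to supply what the paper omits.
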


\begin{proof} Please see \cite {W4}, Theorem 5.3. \end{proof}

\begin{proposition}\label{P:2.2}
 Every $L^q$ differential form $\omega$ has \emph{$p$-balanced} growth, and in fact, has \emph{$p$-finite},
\emph{$p$-mild}, \emph{$p$-obtuse}, \emph{$p$-moderate}, and
\emph{$p$-small} growth with the same value of $q$
\end{proposition}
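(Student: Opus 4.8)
The plan is to use nothing more than the finiteness of $\int_M|\omega|^q\,dv$, together with the chain of implications already recorded in Proposition~\ref{P:2.1}. Fix an arbitrary point $x_0\in M$ and put $C:=\int_M|\omega|^q\,dv<\infty$, which is available by hypothesis since $\omega$ is an $L^q$ form; recall that $p>1$ in every notion of growth appearing in Definition~\ref{D:2.2}. Because $\int_{B(x_0;r)}|\omega|^q\,dv\le C$ for all $r>0$, the ratio $r^{-p}\int_{B(x_0;r)}|\omega|^q\,dv$ is dominated by $C/r^p\to 0$. This already yields that $\omega$ is $p$-finite, the $\liminf$ in \eqref{2.1} being $0$, and it holds for the same $q$ as in the $L^q$ assumption.

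For the remaining four growth conditions I would first obtain $p$-moderate growth directly, choosing the \emph{constant} weight $\psi\equiv 1$. This $\psi$ belongs to $\mathcal F$, since $\int_a^\infty \frac{dr}{r\psi(r)}=\int_a^\infty\frac{dr}{r}=\infty$, and with it $r^{-p}\psi^{-(p-1)}(r)\int_{B(x_0;r)}|\omega|^q\,dv\le C/r^p\to 0$, so the $\limsup$ in \eqref{2.4} equals $0<\infty$ and $\omega$ is $p$-moderate. Now invoke Proposition~\ref{P:2.1} for this value of $q$: $p$-moderate $\Leftrightarrow$ $p$-small $\Rightarrow$ $p$-mild $\Rightarrow$ $p$-obtuse. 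Hence $\omega$ enjoys all five growth properties with the same $q$, and in particular it is $p$-balanced by Definition~\ref{D:2.3}.

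There is no serious obstacle here; the only points needing a line of care are the observation that the constant function $\psi\equiv1$ lies in $\mathcal F$ (so that no monotonicity or convexity of $r\mapsto\int_{B(x_0;r)}|\omega|^q\,dv$ is required), and the bookkeeping of which arrows of Proposition~\ref{P:2.1} are being used. As an alternative that bypasses Proposition~\ref{P:2.1}, each condition can be checked by hand from the single bound $C$: $p$-small follows from $(r/\int_{B(x_0;r)}|\omega|^q\,dv)^{1/(p-1)}\ge(r/C)^{1/(p-1)}$ and $\int_a^\infty(r/C)^{1/(p-1)}\,dr=\infty$; $p$-obtuse follows from Hölder's inequality, which gives $(b-a)^{1+\frac1{p-1}}\le\big(\int_a^b\int_{\partial B(x_0;r)}|\omega|^q\,ds\,dr\big)^{\frac1{p-1}}\int_a^b\big(\int_{\partial B(x_0;r)}|\omega|^q\,ds\big)^{-\frac1{p-1}}\,dr$ together with $\int_a^\infty\int_{\partial B(x_0;r)}|\omega|^q\,ds\,dr\le C$ via the coarea formula, forcing the last integral to diverge as $b\to\infty$; and $p$-mild follows by taking $r_j=j$, noting that $m_j:=\int_{B(j+1)\setminus B(j)}|\omega|^q\,dv$ satisfies $\sum_j m_j\le C$, hence $m_j\to0$, so the terms $(1/m_j)^{1/(p-1)}$ of the series in \eqref{2.2} do not tend to zero. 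I would present the route through Proposition~\ref{P:2.1}, as it is the cleaner one, and relegate these hands-on estimates to a remark.
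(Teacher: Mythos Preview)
Your argument is correct and follows essentially the same route as the paper: verify $p$-finite and $p$-moderate directly from $\int_M|\omega|^q\,dv<\infty$, then invoke Proposition~\ref{P:2.1} and Definition~\ref{D:2.3}. The paper's proof is terser (it simply asserts that $L^q$ implies $p$-finite and $p$-moderate ``from Definition~\ref{D:2.2}''), whereas you supply the explicit weight $\psi\equiv 1\in\mathcal F$ and the bound $C/r^p$; your alternative hands-on verifications of $p$-small, $p$-obtuse, and $p$-mild are extra and not needed, but they are also correct.
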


\begin{proof}
It follows from Definition \ref{D:2.2} that an $L^q$ differential form $\omega$ is both \emph{$p$-finite} and \emph{$p$-moderate}. Now the assertion follows from Definition \ref{D:2.3} and Proposition \ref{P:2.1}.
\end{proof}

Recall a differential $k$-form $\omega$ on $M$ is said to be {\it simple} if there exists a smooth function $f$ on $M\, ,$
and $1 \le i_1 < \cdots < i_k \le n$
such that $\omega = f d x_{i_1}\wedge \cdots \wedge dx_{i_k}\, ,$ where $(x_1, \cdots , x_n)$ is local coordinate on $M\, .$ 
\begin{proposition}\label{P:2.3}
Every simple $k$-form satisfies $\operatorname{Condition}\, \operatorname{W}\, \eqref{1.2}$. 
\end{proposition}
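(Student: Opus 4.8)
The plan is to verify $\operatorname{Condition}\operatorname{W}\,\eqref{1.2}$ directly for a simple $k$-form $\omega = f\, dx_{i_1}\wedge\cdots\wedge dx_{i_k}$ by reducing both inequalities to a pointwise algebraic identity coming from the structure of the wedge product. First I would note that by the hidden duality stated just before Proposition \ref{P:2.5} (namely $\omega$ satisfies $\operatorname{Condition}\operatorname{W}\,\eqref{1.2}$ iff $\star\,\omega$ does), it suffices to establish only the \emph{first} inequality of \eqref{1.2}, i.e. $|\langle d(|\omega|^2)\wedge\omega, d\omega\rangle|\le 2|\omega|^2|d\omega|^2$, for every simple $k$-form; the second then follows because $\star$ of a simple form need not be simple, so I should instead argue that the first inequality holds for \emph{arbitrary} $\omega$ of the special shape and then invoke duality — or, more carefully, prove the first inequality for $\omega$ simple and separately observe $\star\omega$ is, up to a volume-element factor, again of a form to which the same computation applies. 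Let me take the cleaner route: prove the pointwise estimate $|\langle \eta\wedge\omega, d\omega\rangle|\le 2|\omega|^2|d\omega|^2$ with $\eta = d(|\omega|^2)$ for $\omega$ simple, and likewise for $\star\omega$.

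The key computational step is this. Write $|\omega|^2 = f^2\,|dx_{i_1}\wedge\cdots\wedge dx_{i_k}|^2$; in general coordinates the second factor is a positive function $g_I$ of the metric, but the essential point is that $\omega = f\,\sigma$ for a fixed \emph{decomposable} form $\sigma$, so $d\omega = df\wedge\sigma + f\,d\sigma$, and $d(|\omega|^2) = d(f^2 g_I)$. The decomposition $d\omega = df\wedge\sigma + f\,d\sigma$ shows the whole question is governed by how $df$ and $d g_I$ interact with $\sigma$. The cleanest case — and the one the paper surely intends, since it writes $\omega = f\,dx_{i_1}\wedge\cdots\wedge dx_{i_k}$ with the $dx$'s constant — is where the computation is carried out in a coordinate frame that is orthonormal at the point in question, so that $g_I \equiv 1$ and $\sigma$ is parallel there; then $d\omega = df\wedge\sigma$, $d(|\omega|^2) = 2f\,df$, and
\begin{equation}\label{eq:simplecomp}
\langle d(|\omega|^2)\wedge\omega, d\omega\rangle = \langle 2f\,df\wedge (f\sigma), df\wedge\sigma\rangle = 2f^2\,\langle df\wedge\sigma, df\wedge\sigma\rangle = 2f^2|df\wedge\sigma|^2 = 2|\omega|^2|d\omega|^2,
\end{equation}
so the inequality holds with \emph{equality}, and in particular $\le$ holds. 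I would then remark that the left side and both sides of the inequality are pointwise-defined and independent of the coordinate system, so establishing the identity at each point using a frame orthonormal there suffices.

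The main obstacle is making the ``work in an orthonormal coordinate frame'' step honest: a decomposable form $\sigma = dx_{i_1}\wedge\cdots\wedge dx_{i_k}$ built from a coordinate basis need not be parallel, and $d\sigma \ne 0$ in general curved coordinates, so one cannot literally assume $d\omega = df\wedge\sigma$. The resolution is that $\operatorname{Condition}\operatorname{W}$ is a pointwise inequality between smooth functions on $M$; fix $p\in M$ and choose coordinates that are normal (hence the coordinate vector fields are orthonormal at $p$ and their covariant derivatives vanish at $p$), so at $p$ one has $|\sigma|=1$ and $(d\sigma)_p$ expressible purely through Christoffel symbols which vanish at $p$ — giving $(d\omega)_p = (df)_p\wedge\sigma_p$ and $(d(|\omega|^2))_p = 2f(p)(df)_p$, whence \eqref{eq:simplecomp} holds at $p$. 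Since $p$ was arbitrary, the first inequality of \eqref{1.2} holds identically; applying the same argument to $\star\,\omega$ (or quoting the duality of Proposition \ref{P:2.5}, once that is available) gives the second inequality, completing the proof. I would flag that if one instead wants a frame-free argument, the identity $\langle\eta\wedge\omega,d\omega\rangle = \langle\eta\wedge\omega, df\wedge\sigma + f\,d\sigma\rangle$ must be bounded using $|d\sigma|$ terms, which do not obviously cancel — so the normal-coordinate reduction really is the crux.
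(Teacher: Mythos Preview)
Your ``main obstacle'' is illusory: since $\sigma = dx_{i_1}\wedge\cdots\wedge dx_{i_k}$ is a wedge of \emph{exact} $1$-forms, $d\sigma = 0$ identically by $d^2=0$ and the Leibniz rule, on any manifold and in any coordinates. Thus $d\omega = df\wedge\sigma$ holds everywhere with no appeal to normal coordinates or parallelism, and the identity you display is valid at every point once $|\sigma|=1$. This is exactly what the paper does: it sets $|\omega|^2 = f^2$ (tacitly taking $|dx_{i_1}\wedge\cdots\wedge dx_{i_k}|=1$), computes $d(|\omega|^2)=2f\,df$, and in one line obtains $\langle d(|\omega|^2)\wedge\omega,\,d\omega\rangle = 2f^2|d\omega|^2 = 2|\omega|^2|d\omega|^2$ --- an \emph{equality}, not merely the inequality. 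It then repeats the identical computation for $\star\omega = \pm f\,dx_{i_{k+1}}\wedge\cdots\wedge dx_{i_n}$, which is again simple, to handle the second line of \eqref{1.2} directly; no detour through Proposition~\ref{P:2.5} is taken.

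Your normal-coordinate reduction, by contrast, does not work as written: the form $\omega = f\,dx_{i_1}\wedge\cdots\wedge dx_{i_k}$ is simple in one \emph{fixed} chart, and you cannot re-choose that chart to be normal at each successive point $p$ while keeping $\omega$ in the same shape. If instead you pass to \emph{new} normal coordinates at $p$, then $\omega$ is no longer a single coefficient times a coordinate wedge there, and the computation collapses. The only legitimate worry you raise is the metric factor $g_I=|\sigma|^2$; the paper simply takes it equal to $1$, and under that reading the entire proof is two lines with no obstacles to negotiate.
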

\begin{proof}
Let $\omega = f dx_{i_1} \wedge \cdots \wedge dx_{i_k}\, .$
Then $|\omega|^2 = f^2\, ,$ $d(|\omega|^2)=d(|\star\omega|^2)=2fdf\, ,$ $d\omega=df\wedge dx_{i_1}\wedge dx_{i_2}\cdots \wedge dx_{i_k}\, ,$ 
$d\star\omega=df\wedge dx_{i_{k+1}}\wedge dx_{i_{k+2}}\cdots \wedge dx_{i_n}\, ,$ where $dx_{i_1}\wedge dx_{i_2}\cdots \wedge dx_{i_k} \wedge dx_{i_{k+1}}\wedge dx_{i_{k+2}}\cdots \wedge dx_{i_n}$ lies in the component of $A^n\backslash \{0\}$ determined by the orientation of $M\, .$ Hence,
$$
\begin{array}{rll}
\langle d(|\omega|^2) \wedge  \omega, d\omega\rangle
& = \langle 2f d f \wedge f dx_{i_1} \wedge \cdots \wedge dx_{i_k}, d\omega\rangle\\
& = 2 f^2 \langle d \star\omega, d\star\omega\rangle\\
& =2|\omega|^2|d\omega|^2\\
\operatorname{and}\qquad \qquad \qquad \langle d(|\star\omega|^2) \wedge  \star\omega, d\star\omega\rangle
& = \langle 2f d f \wedge f dx_{i_{k+1}}\wedge dx_{i_{k+2}}\cdots \wedge dx_{i_n}, d\star\omega\rangle\\
& = 2 f^2 \langle d \star\omega, d\star\omega\rangle\\
& =2|\star\omega|^2|d\star\omega|^2
\end{array}
$$\end{proof}
A differential $k$-form $\omega$ on $M$ is said to satisfy Kato's type inequality if\begin{equation}\label{2.7}
| d |\omega| | \le | d \omega | \quad \operatorname{on}\quad M\, .
\end{equation}

\begin{proposition}\label{P:2.4}
If a differential $k$-form $\omega$ satisfies Kato's type inequality \r{2.7}, then $\omega$ satisfies $\operatorname{Condition}\, \operatorname{W}\, \eqref{1.2}$. 
\end{proposition}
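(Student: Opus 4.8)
The plan is to verify the two pointwise inequalities comprising $\operatorname{Condition}\, \operatorname{W}\, \eqref{1.2}$ by one and the same elementary estimate, applied first to $\omega$ and then to its Hodge dual $\star\,\omega$. The basic computation I would use is: for any smooth form $\eta$ on $M$, on the open set $\{\eta\neq 0\}$ one has $d(|\eta|^2)=2|\eta|\,d|\eta|$, whence $d(|\eta|^2)\wedge\eta=2|\eta|\,d|\eta|\wedge\eta$, and therefore, by the Cauchy--Schwarz inequality together with the elementary bound $|\alpha\wedge\beta|\le|\alpha|\,|\beta|$ valid for a $1$-form $\alpha$ (applied with $\alpha=d|\eta|$, $\beta=\eta$),
\[
\bigl|\langle d(|\eta|^2)\wedge\eta,\,d\eta\rangle\bigr|\ \le\ 2|\eta|\,\bigl|d|\eta|\wedge\eta\bigr|\,|d\eta|\ \le\ 2|\eta|^2\,\bigl|d|\eta|\bigr|\,|d\eta|.
\]
On the zero set $\{\eta=0\}$ the function $|\eta|^2$ attains a minimum, so $d(|\eta|^2)=0$ there and the left-hand side vanishes; hence the inequality holds on all of $M$.

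I would then apply this with $\eta=\omega$ and invoke the hypothesis $\bigl|d|\omega|\bigr|\le|d\omega|$ to obtain $\bigl|\langle d(|\omega|^2)\wedge\omega,\,d\omega\rangle\bigr|\le 2|\omega|^2\bigl|d|\omega|\bigr|\,|d\omega|\le 2|\omega|^2|d\omega|^2$, which is the first line of \eqref{1.2}. For the second line I would run the identical estimate with $\eta=\star\,\omega$: since $\star$ is a fibrewise isometry one has $|\star\,\omega|=|\omega|$ and hence $\bigl|d|\star\,\omega|\bigr|=\bigl|d|\omega|\bigr|$, so the estimate gives $\bigl|\langle d(|\star\omega|^2)\wedge\star\omega,\,d\star\omega\rangle\bigr|\le 2|\star\omega|^2\bigl|d|\star\omega|\bigr|\,|d\star\omega|$, which collapses to the required $2|\star\omega|^2|d\star\omega|^2$ as soon as $\bigl|d|\star\,\omega|\bigr|\le|d\star\,\omega|$.

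The step I expect to be the main obstacle is precisely that last inequality: the second line of $\operatorname{Condition}\, \operatorname{W}$ is controlled by $d\star\,\omega$, not by $d\omega$, so what is needed there is the Kato-type bound on $d\star\,\omega$ by $d|\omega|=d|\star\,\omega|$, i.e.\ \eqref{2.7} applied to the form $\star\,\omega$. This is in keeping with the hidden duality of Proposition \ref{P:2.5}: \eqref{1.2} for $\omega$ is, up to $\star\star=\pm\operatorname{id}$ and interchanging its two lines, the same assertion as \eqref{1.2} for $\star\,\omega$, so any sufficient condition must involve $\omega$ and $\star\,\omega$ symmetrically. In the extreme degrees $k=0$ and $k=n$ this is automatic, since then one of the two wedge products in \eqref{1.2} has degree exceeding $n$ and so vanishes identically, making the corresponding line the trivial $0\le 0$; in all other degrees one invokes \eqref{2.7} for $\star\,\omega$ alongside \eqref{2.7} for $\omega$. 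Finally, a routine remark: $|\omega|$ and $|\star\,\omega|$ need not be differentiable along the zero set of $\omega$, but there the inequalities of \eqref{1.2} hold trivially as above, so \eqref{2.7} is only ever used on $\{\omega\neq 0\}$, where those norm functions are smooth.
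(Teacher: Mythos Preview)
Your argument is exactly the direct computation the paper has in mind: write $d(|\eta|^2)=2|\eta|\,d|\eta|$ on $\{\eta\ne 0\}$, then combine Cauchy--Schwarz with the Hadamard-type bound $|\alpha\wedge\beta|\le|\alpha|\,|\beta|$ for a $1$-form $\alpha$, and note that on the zero set both sides of \eqref{1.2} vanish. The paper's own proof is the single line ``This follows from a direct computation,'' so your write-up is strictly more detailed but follows the same route.

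Your caveat about the second line of \eqref{1.2} is well taken and is in fact more careful than the paper. As you observe, running the estimate with $\eta=\star\,\omega$ gives
\[
\bigl|\langle d(|\star\omega|^2)\wedge\star\omega,\,d\star\omega\rangle\bigr|\ \le\ 2|\star\omega|^2\,\bigl|d|\star\omega|\bigr|\,|d\star\omega|\ =\ 2|\omega|^2\,\bigl|d|\omega|\bigr|\,|d\star\omega|,
\]
and closing this to $2|\star\omega|^2|d\star\omega|^2$ requires $\bigl|d|\omega|\bigr|\le|d\star\omega|$, i.e.\ Kato's inequality \eqref{2.7} for $\star\,\omega$ rather than for $\omega$. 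The paper's one-line proof does not address this, and the stated hypothesis \eqref{2.7} alone does not obviously supply it. Your reading---that in view of the built-in duality of $\operatorname{Condition}\,\operatorname{W}$ (Proposition \ref{P:2.5}) the hypothesis should be understood symmetrically in $\omega$ and $\star\,\omega$---is the natural one and is precisely what the direct computation actually uses.
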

\begin{proof} This follows from a direct computation.
\end{proof}

On the other hand, $\operatorname{Condition}\, \operatorname{W}$ \eqref{1.2} does not hold in general:
 
\begin{remark}(Counter-Example of $\operatorname{Condition}\, \operatorname{W}$ \eqref{1.2})\label{R:2.1}
Let the differential $1$-form $\omega = x_1x_n dx_1 + x_ndx_n$ in $\mathbb{R}^n$.
Then $|\omega|^2=(x_1^2+1)x_n^2, \quad d(|\omega|^2)=2x_1x_n^2dx_1+2(x_1^2+1)x_ndx_n, \quad \operatorname{and}\quad d\omega = -x_1 dx_1 \wedge dx_n$. Consequently,
$$\begin{array}{rll}\label{3.10}
&\langle d(|\omega|^2)\wedge \omega,  d \omega\rangle \\
& = \langle (2x_1x_n^3 - 2(x_1^2 + 1)x_1 x_n^2)dx_1 \wedge dx_n, d \omega\rangle\\
& = \langle \big(-2x_n^3+2(x_1^2+1)x_n^2\big)d \omega, d \omega\rangle\\
& = -2x_n^3|d\omega|^2+2|\omega|^2|d\omega|^2\\
& >  2|\omega|^2|d\omega|^2\quad \operatorname{if}\quad x_n < 0\quad \operatorname{and}\quad x_1 \ne 0.
\end{array}
$$
\end{remark}

\begin{remark}(Counter-Example of Kato's type inequality)\label{R:2.2}
Let $\omega = x_1x_n dx_1 + x_ndx_n$ in $\mathbb{R}^n$. Then
$$| d |\omega| | = \sqrt{x_1^2 + 1 +
\frac{x_1^2x_2^2}{x_1^2 + 1}} > |x_1| = | d \omega |\, .$$
  \end{remark}
  
\begin{proposition}\label{P:2.5}$\big ($Duality for $\operatorname{Condition}\, \operatorname{W}\,  \eqref{1.2}\big )$  $\omega \in A^k$ satisfies  $\operatorname{Condition}\, \operatorname{W}\,  \eqref{1.2}$ if and only if $\star \, \omega \in A^{n-k}$ satisfies  $\operatorname{Condition}\, \operatorname{W}\,  \eqref{1.2}\, .$
\end{proposition}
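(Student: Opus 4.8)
The plan is to observe that $\operatorname{Condition}\, \operatorname{W}\, \eqref{1.2}$ is the conjunction of two inequalities, and that passing from $\omega$ to $\star\,\omega$ simply interchanges them, modulo the involution $\star\,\star = (-1)^{nk+k+1}\operatorname{Id}$ on $A^{k}\, .$ Denote the first displayed inequality in \eqref{1.2} by $I_{1}(\omega)$ and the second by $I_{2}(\omega)\, ,$ so that by Definition \ref{D:1.1} a form satisfies $\operatorname{Condition}\, \operatorname{W}$ precisely when both hold; likewise $\star\,\omega$ satisfies it precisely when $I_{1}(\star\,\omega)$ and $I_{2}(\star\,\omega)$ hold. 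A preliminary remark on degrees: $d(|\omega|^{2})\wedge\omega$ and $d\omega$ are both $(k+1)$-forms, so the pairing in $I_{1}(\omega)$ is the inner product on $A^{k+1}\, ,$ and similarly $I_{1}(\star\,\omega)$ lives on $A^{n-k+1}\, .$

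First I would record the trivial half: written out, $I_{1}(\star\,\omega)$ is the inequality $\bigl|\langle d(|\star\,\omega|^{2})\wedge\star\,\omega, d\star\,\omega\rangle\bigr| \le 2|\star\,\omega|^{2}|d\star\,\omega|^{2}\, ,$ which is literally $I_{2}(\omega)\, ;$ hence $I_{1}(\star\,\omega)=I_{2}(\omega)$ with nothing to compute. Next I would treat $I_{2}(\star\,\omega)\, ,$ which involves $\star\,\star\,\omega\, .$ Set $\epsilon:=(-1)^{nk+k+1}\in\{\pm1\}\, ,$ so that $\star\,\star\,\omega=\epsilon\,\omega$ on $A^{k}\, .$ Then $|\star\,\star\,\omega|^{2}=|\omega|^{2}$ (whence $d(|\star\,\star\,\omega|^{2})=d(|\omega|^{2})$), $d(\star\,\star\,\omega)=\epsilon\,d\omega$ (whence $|d\star\,\star\,\omega|^{2}=|d\omega|^{2}$), and $d(|\star\,\star\,\omega|^{2})\wedge\star\,\star\,\omega=\epsilon\,d(|\omega|^{2})\wedge\omega\, .$ Substituting these into $I_{2}(\star\,\omega)$, every term carrying $\epsilon$ carries it squared — in $|\star\,\star\,\omega|^{2}\, ,$ in $|d\star\,\star\,\omega|^{2}\, ,$ and in the bilinear pairing $\langle d(|\star\,\star\,\omega|^{2})\wedge\star\,\star\,\omega, d\star\,\star\,\omega\rangle$ — so all signs cancel and $I_{2}(\star\,\omega)$ collapses exactly to $I_{1}(\omega)\, .$

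Combining the two halves, $\star\,\omega$ satisfies $\operatorname{Condition}\, \operatorname{W}$ iff $I_{1}(\star\,\omega)\wedge I_{2}(\star\,\omega)$ iff $I_{2}(\omega)\wedge I_{1}(\omega)$ iff $\omega$ satisfies $\operatorname{Condition}\, \operatorname{W}\, ;$ the reverse implication is the same statement applied to $\star\,\omega$ in place of $\omega$, using that $\star:A^{k}\to A^{n-k}$ is a bijection. There is essentially no obstacle: the only point demanding care is the bookkeeping of the sign $\epsilon$, i.e.\ verifying that it occurs an even number of times in each of the three ingredients of $I_{2}(\star\,\omega)$ so that it cancels unconditionally, with no hypothesis on the parities of $k\, ,$ $n\, ,$ or $nk\, .$ I would also note that the argument is purely formal — it uses only $\star\,\star=\epsilon\operatorname{Id}$ and never the fact that $\star$ is an isometry — which is precisely why it runs verbatim in either direction.
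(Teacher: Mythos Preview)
Your proof is correct and follows exactly the paper's approach: the paper's own proof is the single line ``This follows at once from Definition~\ref{D:1.1} of $\operatorname{Condition}\,\operatorname{W}$ and the fact that $\star\,\star=(-1)^{nk+k+1}$,'' and what you have done is simply unpack that sentence by showing explicitly that the two inequalities in \eqref{1.2} interchange under $\omega\mapsto\star\,\omega$, with the sign $\epsilon=\pm1$ from $\star\,\star$ cancelling in pairs. There is nothing to add or correct.
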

\begin{proof} This follows at once from Definition \ref{D:1.1} of $\operatorname{Condition}\, \operatorname{W}\,  \eqref{1.2}$, and the fact that $\star\, \star = (-1)^{nk+k+1}$. 
\end{proof}
In the next section, we are
interested in differential forms of $p$-balanced growth when
$p=2$.

\section{$2$-Balanced Harmonic Forms and Solutions of $\langle\omega, \Delta \omega\rangle \ge 0$}

We will use the following result:

\begin{lemma}\label{L:3.1} Suppose $\omega$ is a differential $k$-form on an $n$-dimensional manifold $M$.
Then, for any $f\in C^{\infty}(M)$, we have
\begin{equation}\label{3.1}
d^{*}(f\omega)=fd^{*}(\omega)+(-1)^{nk+n+1}\star (df\wedge \star\omega)
\end{equation}
\end{lemma}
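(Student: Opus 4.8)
The plan is to compute $d^{*}(f\omega)$ directly from the definition $d^{*}=(-1)^{nk+n+1}\star d\,\star$ of the codifferential on $k$-forms, using only two elementary facts: that the Hodge star operator $\star$ is $C^{\infty}(M)$-linear (so it commutes with multiplication by the function $f$), and that the exterior derivative $d$ satisfies the Leibniz rule.

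First I would write, since $f\omega$ is a $k$-form,
\[
d^{*}(f\omega)=(-1)^{nk+n+1}\star d\,\star(f\omega)=(-1)^{nk+n+1}\star d\bigl(f\,\star\omega\bigr),
\]
where the second equality uses $\star(f\omega)=f\,\star\omega$. Next I would apply the product rule $d(f\,\star\omega)=df\wedge\star\omega+f\,d(\star\omega)$ and distribute $\star$ across the sum:
\[
d^{*}(f\omega)=(-1)^{nk+n+1}\star(df\wedge\star\omega)+(-1)^{nk+n+1}f\,\star d(\star\omega).
\]
Finally I would recognize the second term as $f\,d^{*}\omega$, again because $d^{*}\omega=(-1)^{nk+n+1}\star d\,\star\omega$ and $\star$ is linear over functions, which yields \eqref{3.1}.

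There is no genuine obstacle here; the only point requiring care is the sign bookkeeping, namely verifying that the exponent $(-1)^{nk+n+1}$ appearing in $d^{*}$ applied to the $k$-form $f\omega$ is the same as the one appearing in $d^{*}$ applied to $\omega$ itself — which it is, since both $f\omega$ and $\omega$ have degree $k$. Consequently the sign factors out cleanly and no extra powers of $(-1)$ are introduced in passing from $d(f\,\star\omega)$ back to $d^{*}$, and the stated identity follows.
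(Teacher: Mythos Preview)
Your proof is correct and follows essentially the same approach as the paper's own proof: both compute $d^{*}(f\omega)$ directly from the definition $d^{*}=(-1)^{nk+n+1}\star d\,\star$, use the $C^{\infty}(M)$-linearity of $\star$ to pull $f$ past $\star$, apply the Leibniz rule for $d$, and then identify the resulting terms. Your remark on the sign bookkeeping is accurate and matches the paper's implicit use of the fact that $f\omega$ and $\omega$ share the same degree.
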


\begin{proof} Since the operator $\star$ is linear with respect to multiplication by functions and  $d^{*}=(-1)^{nk+n+1}\star d\, \, \star\, ,$
\begin{eqnarray*}
\begin{array}{rll}
d^{*}(f\omega) &=& (-1)^{nk+n+1}\star d\star (f\omega) \\
            &=& (-1)^{nk+n+1}\star d(f\star\omega) \\
            &=& (-1)^{nk+n+1}\star (f d\star\omega+df\wedge\star\omega) \\
            &=& f(-1)^{nk+n+1}\star d\star\omega+(-1)^{nk+n+1}\star (df\wedge\star\omega) \\
            &=& fd^{*}\omega+(-1)^{nk+n+1}\star(df\wedge\star\omega)
\end{array}
\end{eqnarray*}
\end{proof}
\begin{remark}\label{R:3.1}
Lemma \ref{L:3.1} shows that the codifferential operator $d^{*}$ is $\mathbb R$-linear, but neither linear with respect to multiplication by functions, nor satisfies the product rule $\bigg ($ i.e.,   $d^{*} (f \omega) \ne  (d^{*} f) \omega + f  d^{*} \omega \bigg ),$ unless $f \equiv$ constant on $M\, .$
\end{remark}

\begin{theorem}\label{T:3.1} Suppose $\omega \in A^k$ has $2$-finite growth $\big ($$\eqref{2.1}, p=2 \big )$, for $q=2\, ,$ or for $1 < q(\ne2) < 3$ with $\operatorname{Condition} \operatorname{W}$ \eqref{1.2} $\big (\operatorname{cf}$. \eqref{1.3}$\big )$.
Then $\omega$ is a solution of $\langle\omega, \Delta \omega\rangle \ge 0$ $(\operatorname{resp}.\, \omega$ is harmonic$)$  on $M$ if and only if $\omega$ is closed and co-closed. Or equivalently,  \eqref {1.6} $\big(  \operatorname{resp}. \eqref{1.1} \big)$ holds.
\end{theorem}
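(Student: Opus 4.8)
The two implications $d\omega=d^{*}\omega=0\ \Rightarrow\ \Delta\omega=0\ \Rightarrow\ \langle\omega,\Delta\omega\rangle=0\ge0$ are immediate from $\Delta=-(dd^{*}+d^{*}d)$, so both \eqref{1.6} and \eqref{1.1} reduce to the single implication: if $\langle\omega,\Delta\omega\rangle\ge0$ and $\omega$ has $2$-finite growth for the relevant $q$, then $d\omega=d^{*}\omega=0$. The plan is to prove this by one integration-by-parts argument with a weighted cutoff. Fix $x_{0}\in M$ and, for $r>0$, a standard cutoff $\eta=\eta_{r}$ with $\eta\equiv1$ on $B(x_{0};r)$, $\operatorname{supp}\eta\subset B(x_{0};2r)$, $0\le\eta\le1$ and $|\nabla\eta|\le c/r$; completeness makes the balls precompact, so $\eta$ has compact support. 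Pair $\langle\omega,\Delta\omega\rangle\ge0$ with the nonnegative weight $\phi=\eta^{2}|\omega|^{q-2}$ (for $q\ne2$; for $q=2$ simply $\phi=\eta^{2}$; for $q\ne2$ one first works with the smooth weight $(|\omega|^{2}+\varepsilon)^{(q-2)/2}$ in place of $|\omega|^{q-2}$ and lets $\varepsilon\downarrow0$ to justify the differentiations). Integrating $\int_{M}\phi\,\langle\omega,\Delta\omega\rangle\,dv\ge0$, applying Stokes' theorem to $\langle d(\phi\omega),d\omega\rangle$ and Lemma \ref{L:3.1} to $d^{*}(\phi\omega)=\phi\,d^{*}\omega+(-1)^{nk+n+1}\star(d\phi\wedge\star\omega)$, yields
\[
\int_{M}\phi\,\bigl(|d\omega|^{2}+|d^{*}\omega|^{2}\bigr)\,dv\ \le\ -\int_{M}\langle d\phi\wedge\omega,\,d\omega\rangle\,dv\ -\ (-1)^{nk+n+1}\int_{M}\langle\star(d\phi\wedge\star\omega),\,d^{*}\omega\rangle\,dv .
\]

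Next, expand $d\phi=2\eta|\omega|^{q-2}\,d\eta+\tfrac{q-2}{2}\,\eta^{2}|\omega|^{q-4}\,d(|\omega|^{2})$. The $d\eta$-terms are controlled by $|d\eta\wedge\omega|\le|d\eta|\,|\omega|$ together with Cauchy--Schwarz; the $d(|\omega|^{2})$-terms are exactly where $\operatorname{Condition}\,\operatorname{W}$ \eqref{1.2} enters, via $|\langle d(|\omega|^{2})\wedge\omega,d\omega\rangle|\le2|\omega|^{2}|d\omega|^{2}$ and, after rewriting $\langle\star(d\phi\wedge\star\omega),d^{*}\omega\rangle$ in terms of $\langle d(|\star\omega|^{2})\wedge\star\omega,d\star\omega\rangle$ (using $|\star\omega|=|\omega|$, $|d\star\omega|=|d^{*}\omega|$, and that $\star$ is a fibrewise isometry), its dual half. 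Each contributes a term $|q-2|\int_{M}\eta^{2}|\omega|^{q-2}\bigl(|d\omega|^{2}+|d^{*}\omega|^{2}\bigr)\,dv$, which is absorbed into the left side precisely because $1<q<3$ forces $1-|q-2|>0$; when $q=2$ no such term appears, which is why $\operatorname{Condition}\,\operatorname{W}$ \eqref{1.2} is not needed there. A Young's inequality on the remaining $d\eta$-terms then gives a Caccioppoli-type estimate
\[
\int_{B(x_{0};r)}|\omega|^{q-2}\bigl(|d\omega|^{2}+|d^{*}\omega|^{2}\bigr)\,dv\ \le\ \frac{C}{r^{2}}\int_{B(x_{0};2r)}|\omega|^{q}\,dv ,\qquad C=C(q).
\]

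Finally I would invoke $2$-finite growth in two passes. Since $\liminf_{s\to\infty}s^{-2}\int_{B(x_{0};s)}|\omega|^{q}\,dv<\infty$ is equivalent, by reparametrizing $s=2r$, to $\liminf_{r\to\infty}r^{-2}\int_{B(x_{0};2r)}|\omega|^{q}\,dv<\infty$, passing to the limit in the Caccioppoli estimate along such a sequence $r_{j}\to\infty$, and using that its left side is monotone in $r$, gives $E:=\int_{M}|\omega|^{q-2}\bigl(|d\omega|^{2}+|d^{*}\omega|^{2}\bigr)\,dv<\infty$. For the second pass, finiteness of $E$ makes the annulus integrals $\int_{B(x_{0};2r)\setminus B(x_{0};r)}|\omega|^{q-2}\bigl(|d\omega|^{2}+|d^{*}\omega|^{2}\bigr)\,dv\to0$; repeating the estimate of the $d\eta$-terms with Cauchy--Schwarz so that this annulus integral appears as one factor — the other factor, $r^{-2}\int_{B(x_{0};2r)}|\omega|^{q}\,dv$, staying bounded along $r_{j}$ — forces $E=0$. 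Hence $|\omega|^{q-2}\bigl(|d\omega|^{2}+|d^{*}\omega|^{2}\bigr)\equiv0$; since $|\omega|^{q-2}>0$ wherever $\omega\ne0$ and $d\omega=d^{*}\omega=0$ trivially on the interior of $\{\omega=0\}$, continuity of $d\omega$ and $d^{*}\omega$ yields $d\omega=d^{*}\omega=0$ on all of $M$ (for $q=2$ the weight is $1$ and this last step is vacuous). The ``resp.\ harmonic'' assertion and the full equivalences \eqref{1.6}, \eqref{1.1} are then immediate. I expect the main obstacle to be the step from $E<\infty$ to $E=0$: the Caccioppoli estimate only yields boundedness, not decay, of the relevant energy, so the two-pass scheme — finiteness first, then vanishing by squeezing the annulus — is essential; a secondary subtlety is tracking signs and the isometry properties of $\star$ so that the dual half of $\operatorname{Condition}\,\operatorname{W}$ \eqref{1.2} governs the $d^{*}$-term.
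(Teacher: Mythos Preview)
Your argument is correct and tracks the paper's proof through the main integration-by-parts identity: the same cutoff-weighted test form, Lemma~\ref{L:3.1} for the $d^{*}$-term, the same use of both halves of $\operatorname{Condition}\,\operatorname{W}$ \eqref{1.2} on the $d(|\omega|^{2})$-contributions, and the same absorption constant $1-|q-2|>0$. Where you diverge is in the endgame. The paper keeps the Cauchy--Schwarz form, squares it to obtain a recursion $Q_{j+1}^{2}\le C\big(r_{j+1}^{2}A_{j+1}-r_{j}^{2}A_{j}\big)(Q_{j+1}-Q_{j})/(r_{j+1}-r_{j})^{2}$ along a doubling sequence $r_{j+1}\ge 2r_{j}$, and then telescopes: the $2$-finite bound $A_{j+1}\le K$ forces $\sum_{j}Q_{j+1}^{2}<\infty$, hence $Q_{j}\to0$. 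Your two-pass scheme---Young's inequality once to get the Caccioppoli bound $E<\infty$, then a second pass through the Cauchy--Schwarz form in which the annulus factor $\int_{B(2r_{j})\setminus B(r_{j})}|\omega|^{q-2}(|d\omega|^{2}+|d^{*}\omega|^{2})\,dv\to0$ while the other factor stays bounded---is a cleaner and slightly more elementary route to the same conclusion $E=0$. What the paper's recursion buys is a uniform template: the inequality \eqref{1.26} is reused verbatim in Theorems~\ref{T:3.2}--\ref{T:3.5} for the $2$-mild, $2$-obtuse, $2$-moderate and $2$-small cases, where a direct Caccioppoli bound is not available. One minor point to tighten: for $1<q<2$ you should, as the paper does, pass to the $\varepsilon\to0$ limit via monotone convergence \emph{after} fixing $r$ in the Caccioppoli inequality and \emph{before} sending $r_{j}\to\infty$, so that the possibly singular weight $|\omega|^{q-2}$ never has to be differentiated.
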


\begin{proof}
($\Leftarrow$) It is obvious. $\quad $ $(\Rightarrow)$ 
Choose a
smooth cut-off function $\psi(x)$ as in \cite [(3.1)]{W2}, i.e.
for any $x_0 \in M$ and any pair of positive numbers $s,t$ with $s
< t\, ,$  a rotationally symmetric Lipschitz continuous
nonnegative function $\psi(x)=\psi(x;s,t)$ satisfies $\psi \equiv
1$ on $B(s)$, $\psi \equiv 0$ off $B(t)\, ,$ and $| \nabla \psi|
\leq \frac{C_1}{t-s}, \ \ a.e. \ \ on \ \ M\, ,$ where $C_1
> 0$ is a constant (independent of $x_0,s,t$). Let $1 < q < \infty\,
.$ to be determined later. By \eqref{1.4} or the harmonicity of $\omega\, , \Delta = -(d^{*} d+ d d^{*} )\, ,$ and the adjoint relationship
of $d$ and $d^{*}$ on differential forms with compact support, we
have for any constant $\E > 0\, ,$
\begin{equation}\label{1.12}
\begin{array}{rll}
0& \le & \int _{B(t)}\langle
\psi^2(|\omega|^2+\epsilon)^{\frac{q}{2}-1}\omega,\Delta
\omega\rangle \, dv\\
& = &\int _{B(t)}-\langle
\psi^2(|\omega|^2+\epsilon)^{\frac{q}{2}-1}\omega,
d^{*}d\omega\rangle-\langle
\psi^2(|\omega|^2+\epsilon)^{\frac{q}{2}-1}\omega,
dd^{*}\omega\rangle \, dv\\
&=& \int _{B(t)}-\langle
d\big (\psi^2(|\omega|^2+\epsilon)^{\frac{q}{2}-1}\omega\big ),d\omega\rangle
\, dv -\int _{B(t)}\langle
d^{*}\big (\psi^2(|\omega|^2+\epsilon)^{\frac{q}{2}-1}\omega\big),
d^{*}\omega\rangle \, dv\end{array}
\end{equation}

Applying Lemma \ref{L:3.1} in which 
$f=\psi^2(|\omega|^2+\epsilon)^{\frac{q}{2}-1} $ to \eqref{1.12}, we have
\begin{equation}\label{1.13}
\begin{array}{rll}
0& \le & - \int _{B(t)} \langle
2\psi(|\omega|^2+\epsilon)^{\frac{q}{2}-1}d\psi\wedge \omega +
\psi^2 d\big ((|\omega|^2+\epsilon)^{\frac{q}{2}-1}\big )\wedge
 \omega  \\
 &  &\qquad \qquad +
 \psi^2(|\omega|^2+\epsilon)^{\frac{q}{2}-1}d\omega,d\omega\rangle\, dv\\
 &  & -\int_{B(t)}
\psi^2(|\omega|^2+\epsilon)^{\frac{q}{2}-1}|d^{*}\omega|^2\, dv\\
& &  -
\int _{B(t)}\langle
(-1)^{nk+n+1}\star \bigg (d \big (\psi^2(|\omega|^2+\epsilon)^{\frac{q}{2}-1}\big )\wedge\star\omega \bigg ),
d^{*}\omega\rangle \, dv
\end{array}
\end{equation}

The first integral in \eqref{1.13}, via a generalized Hadmard Theorem $|d\psi\wedge
\omega|\le |d\psi| |\omega|$, $\operatorname{Condition} \operatorname{W}\, \eqref{1.2}$ (applying to $\langle d(|\omega|^2+\epsilon)\wedge\omega, d\omega\rangle$) and Cauchy-Schwarz inequality $\bigg ($applying to $ \int_{B(t)\backslash
B(s)}\psi(|\omega|^2+\epsilon)^{\frac{q-1}{2}}|d\psi||d\omega|\, dv = \int_{B(t)\backslash
B(s)}\big (|d\psi|(|\omega|^2+\epsilon)^{\frac{q}{4}}\big )\cdot \big (\psi(|\omega|^2+\epsilon)^{\frac{q-2}{4}}|d\omega|\big )\, dv
\bigg )\, ,$ becomes

\begin{equation}\label{1.14}
\begin{array}{rll}
& - \int _{B(t)} \langle
2\psi(|\omega|^2+\epsilon)^{\frac{q}{2}-1}d\psi\wedge \omega +
\psi^2 d\big ((|\omega|^2+\epsilon)^{\frac{q}{2}-1}\big )\wedge
 \omega \\
 & \qquad \qquad +
 \psi^2(|\omega|^2+\epsilon)^{\frac{q}{2}-1}d\omega,d\omega\rangle\,  dv\\
&  \leq \int_{B(t)\backslash
B(s)}2\psi(|\omega|^2+\epsilon)^{\frac{q-1}{2}}|d\psi||d\omega|\, dv\\
& \quad +\int_{B(t)}(\frac {q}{2}-1)\psi^2(|\omega|^2+\epsilon)^{\frac{q}{2}-2}\langle d|\omega|^2 \wedge \omega, d\omega \rangle \, dv\\
& \quad -\int_{B(t)}\psi^2(|\omega|^2+\epsilon)^{\frac{q}{2}-1}|d\omega|^2\, dv\\
 &\leq 
 2\big(\int_{B(t)\backslash
B(s)}|d\psi|^2(|\omega|^2+\epsilon)^{\frac{q}{2}}\, dv\big)^{\frac{1}{2}}\big(\int_{B(t)\backslash
B(s)}\psi^2(|\omega|^2+\epsilon)^{\frac{q}{2}-1}|d\omega|^2\, dv\big)^{\frac{1}{2}}\\
& \quad +(|q-2|-1)\int_{B(t)}\psi^2(|\omega|^2+\epsilon)^{\frac{q}{2}-1}|d\omega|^2\, dv
\end{array}
\end{equation}

On the other hand, by Condition $\operatorname{W}$ \eqref{1.2} we have
\begin{equation}
\begin{array}{rll}
|\langle d(|\omega|^2+\epsilon)\wedge\star\omega, \star d^{*}\omega\rangle|
&=|\langle d(|\star\omega|^2)\wedge\star\omega, d\star\omega\rangle| \\
&\leq 2|\star\omega|^2|d\star\omega|^2 \\
&=2|\omega|^2||\star d^{*}\omega|^2\\
&=2|\omega|^2|d^{*}\omega|^2
\end{array}\label{1.15}
\end{equation} 
The third integral in \eqref{1.13}, via \eqref{1.15}  and Cauchy-Schwarz inequality$\bigg ($applying to $ \int_{B(t)\backslash
B(s)}\psi(|\omega|^2+\epsilon)^{\frac{q-1}{2}}|d\psi||d^{*}\omega|\, dv = \int_{B(t)\backslash
B(s)}\big (|d\psi|(|\omega|^2+\epsilon)^{\frac{q}{4}}\big )\cdot \big (\psi(|\omega|^2+\epsilon)^{\frac{q-2}{4}}|d^{*}\omega|\big )\, dv\bigg )\, ,$
becomes
\begin{equation}\label{1.16}
\begin{array}{rll}
& -\int _{B(t)}\langle
(-1)^{nk+n+1}\star \bigg ( d(\psi^2(|\omega|^2+\epsilon)^{\frac{q}{2}-1})\wedge\star\omega\bigg ),
d^{*}\omega\rangle \, dv\\
 & \leq|\int_{B(t)}\langle d(\psi^2(|\omega|^2+\epsilon)^{\frac{q}{2}-1})\wedge\star\omega, \star d^{*}\omega\rangle| \, dv \\
 & \leq \int_{B(t)\backslash B(s)}|\langle 2\psi(|\omega|^2+\epsilon)^{\frac{q}{2}-1}d\psi\wedge\star\omega, \star d^{*}\omega\rangle| \, dv \\
& \quad +\int_{B(t)} |\frac{q}{2}-1|\psi^2(|\omega|^2+\epsilon)^{\frac{q}{2}-2}|\langle d(|\omega|^2+\epsilon)\wedge\star\omega, \star d^{*}\omega\rangle|\, dv \\
&\leq 2\int_{B(t)\backslash
B(s)}\psi(|\omega|^2+\epsilon)^{\frac{q-1}{2}}|d\psi||d^{*}\omega|\, dv
+\int_{B(t)}|q-2|\psi^2(|\omega|^2+\epsilon)^{\frac{q}{2}-1}|d^{*}\omega|^2\, dv
\\
& \leq |q-2|\int_{B(t)}
\psi^2(|\omega|^2+\epsilon)^{\frac{q}{2}-1}|d^{*}\omega|^2\, dv\\
& \quad +2\big(\int_{B(t)\backslash
B(s)}|d\psi|^2(|\omega|^2+\epsilon)^{\frac{q}{2}}\, dv\big)^{\frac{1}{2}}\big(\int_{B(t)\backslash
B(s)}\psi^2(|\omega|^2+\epsilon)^{\frac{q}{2}-1}|d^{*}\omega|^2\, dv\big)^{\frac{1}{2}}
\end{array}
\end{equation}
Substituting \eqref{1.14} and \r{1.16} into \r{1.13}, we obtain the
following inequality:
\begin{equation}\label{1.17}
\begin{array}{rll}
&(1-|q-2|)\int_{B(t)}\psi^2(|\omega|^2+\epsilon)^{\frac{q}{2}-1}\big (|d\omega|^2 + |d^{*}\omega|^2\big )\, dv
\\
&\leq 2\bigg (\int_{B(t)\backslash
B(s)}|d\psi|^2(|\omega|^2+\epsilon)^{\frac{q}{2}}\, dv\bigg )^{\frac{1}{2}}\\
&\qquad \cdot \bigg (\big( \int_{B(t)\backslash
B(s)}\psi^2(|\omega|^2+\epsilon)^{\frac{q}{2}-1}|d\omega|^2\, dv\big)^{\frac{1}{2}}\\
& \qquad \qquad +\big(\int_{B(t)\backslash
B(s)}\psi^2(|\omega|^2+\epsilon)^{\frac{q}{2}-1}|d^{*}\omega|^2\, dv\big)^{\frac{1}{2}}\bigg )
\\
&\leq 2\sqrt{2} \bigg (\int_{B(t)\backslash
B(s)}|d\psi|^2(|\omega|^2+\epsilon)^{\frac{q}{2}}\, dv\bigg )^{\frac{1}{2}}\\
& \qquad \cdot \bigg (\int_{B(t)\backslash
B(s)}\psi^2(|\omega|^2+\epsilon)^{\frac{q}{2}-1}(|d\omega|^2+|d^{*}\omega|^2)\, dv\bigg )^{\frac{1}{2}}
\end{array}
\end{equation}
where in the last step we have applied the inequality
\[
\mathcal A^{\frac{1}{2}}+\mathcal B^{\frac{1}{2}}\leq \sqrt{2}(\mathcal A+\mathcal B)^{\frac{1}{2}}
\]
in which
\[
\begin{aligned}
& \mathcal A=\int_{B(t)\backslash
B(s)}\psi^2(|\omega|^2+\epsilon)^{\frac{q}{2}-1}|d\omega|^2\, dv\quad
{\rm and} \\
& \mathcal B=\int_{B(t)\backslash
B(s)}\psi^2(|\omega|^2+\epsilon)^{\frac{q}{2}-1}|d^{*}\omega|^2\, dv\,
.
\end{aligned}
\]

Choosing
\begin{equation}\label{1.18}
1<q<3, \quad \text{i.e.}\quad 1-|q-2| > 0\, ,
\end{equation}
squaring both sides of \eqref{1.17}, and using $| \nabla \psi|
\leq \frac{C_1}{t-s}, \ \ a.e. \ \ on \ \ M\, ,$ we have 
\begin{equation}\label{1.19}
\begin{array}{rll}
&\big (\int_{B(t)}\psi^2(|\omega|^2+\epsilon)^{\frac{q}{2}-1}(|d\omega|^2+|d^{*}\omega|^2)\, dv\big )^2
\\
&\leq \bigg (\frac {2\sqrt{2}C_1}{1-|q-2|}\big )^2\big( \frac {1}{(t-s)^2}\int_{B(t)\backslash
B(s)}(|\omega|^2+\epsilon)^{\frac{q}{2}}\, dv\bigg )\\
& \qquad \cdot \bigg (\int_{B(t)\backslash
B(s)}\psi^2(|\omega|^2+\epsilon)^{\frac{q}{2}-1}(|d\omega|^2+|d^{*}\omega|^2)\, dv\bigg )
\end{array}
\end{equation}

Let \{$r_j$\} be a strictly increasing sequence of positive real
numbers going to infinity and define:

\begin{equation}
\begin{array}{rll}

&A_j(\E)={1\over{r_j^2}}\int\limits_{B(r_j)}(|\omega|^2+\epsilon)^{\frac{q}{2}} \, dv\\
&\varphi_j(x)=\psi(x;r_j,r_{j+1})\\
&Q_{j+1}(\E)= \int\limits_{B(r_{j+1})}\varphi_j^2(|\omega|^2+\epsilon)^{\frac{q}{2}-1}(|d\omega|^2+|d^{*}\omega|^2)\, dv\\
&C = (\frac{2\sqrt{2}C_1}{1-|q-2|})^2

\end{array}\label{1.20}
\end{equation}

Using the above notations, substituting $s = r_j$, $t=r_{j+1}$
and $\psi=\varphi_{j}$ into \r{1.19}, and using the observation,  

$\varphi_{j-1}\leq \varphi_{j}\,  \Rightarrow\, Q_{j}(\E) \le \int\limits_{B(r_{j})}\varphi_j^2(|\omega|^2+\epsilon)^{\frac{q}{2}-1}(|d\omega|^2+|d^{*}\omega|^2)\, dv\, ,$
we have 

\begin{equation}\label{1.21}
\begin{array}{rll}
Q_{j+1}^2(\E) \le C \bigg(\frac{r_{j+1}^2 A_{j+1}(\E)-r_j^2
A_j(\E)}{(r_{j+1}-r_j)^2}\bigg)
\bigg(Q_{j+1}(\E)-Q_j(\E)\bigg)
\end{array}
\end{equation}

Choosing $\{r_j\}$ such that $r_{j+1}\geq 2r_j$, we have

\begin{equation}\label{1.22}
 \frac{r_{j+1}^2A_{j+1}(\E)-r_j^2A_j(\E)}{(r_{j+1}-r_j)^2}\leq 4A_{j+1}(\E)
\end{equation}

and so

\begin{equation}\label{1.23}
\begin{array}{rll}
Q_{j+1}^2(\E) \le 4C A_{j+1}(\E)
\bigg(Q_{j+1}(\E)-Q_j(\E)\bigg)
\end{array}
\end{equation}

Since $Q_{j+1}(\E) \ne \infty$, it follows from \eqref{1.23} that

\begin{equation}\label{1.24}
0\leq Q_{j+1}(\E)\leq 4 C  A_{j+1}(\E)
\end{equation}

For $2 \le q < 3\,  (\operatorname{resp.}\, 1 < q < 2),$ $\{Q_j(\E)\}$ is an increasing (resp. deacreasing) sequence in $\E\, ,$ and for $1 < q < 3, \{A_j(\E)\}$ is an increasing sequence in $\E\, .$ We define $Q_j := Q_j(0)$ and $A_j:=A_j(0)\, .$ Then by the Monotone Convergence Theorem,  $\lim _{\E \to 0} A_j(\E)=A_j$, $\lim _{\E \to 0} Q_j(\E) = Q_j$ and 
as $\E\rightarrow 0$ in \eqref{1.24} and \eqref{1.21}, we have
\begin{equation}\label{1.25}
0\leq Q_{j+1}\leq 4 C  A_{j+1}
\end{equation}
and
\begin{equation}\label{1.26}
\begin{array}{rll}
Q_{j+1}^2 \le C \bigg(\frac{r_{j+1}^2 A_{j+1}-r_j^2
A_j}{(r_{j+1}-r_j)^2}\bigg)
\bigg(Q_{j+1}-Q_j\bigg)
\end{array}
\end{equation}
respectively.
We claim that $Q_{j+1}\rightarrow 0$ as $j\rightarrow \infty$.
To complete the proof,  assume that $\omega$ has $2$-finite growth (i.e., $\liminf_{r\rightarrow\infty}\frac{1}{r^2}\int_{B(x_0;r)}|\omega|^{q}\, \, dv = \liminf_{r\rightarrow\infty} A_r
<\infty$) for $1 < q < 3$, and $\omega$ is either $d\omega\neq 0\, ,$ or $d^{*}\omega\neq 0$. Thus, 
there exist a constant $K > 0\, ,$ and a sequence $\{r_j\}$ with
$r_{j+1}\geq 2r_j$, such that $A_{j+1} \le K\, .$
As $\E \to 0$
in \eqref {1.23}, summing over $j$, we have 
via \eqref{1.25} for $\forall N>1\, ,$
$$\sum_{j=1}^{N}Q^2_{j+1}\le 4 C K(Q_{N+1} - Q_{1}) \le 16 C^2 K^2$$
Therefore, the convergent infinite series $\sum_{j=1}^{\infty}Q^2_{j+1}$ implies that $Q_{j+1}\rightarrow 0$ as $j\rightarrow\infty\, $, i.e., $\int\limits_M |\omega|^{q-2}(|d\omega|^2+|d^{*}\omega|^2)\, dv = 0\, .$ This would lead to $d\omega = d^{*}\omega \equiv 0$ on $M$, a contradiction. Indeed, if $d \omega\, (\operatorname{resp.}\, d^{*} \omega)\ne 0$ at a point, then $d\omega\, (\operatorname{resp.}\, d^{*} \omega)\ne 0$ in an open set $U$ containing that point by the continuity. For $q > 2$, this would force $\omega = 0$ and lead to a contradiction that $d\omega\, (\operatorname{resp.}\, d^{*} \omega)= 0$ in $U\, .$ For $q = 2$, $\int\limits_M |\omega|^{q-2}(|d\omega|^2+|d^{*}\omega|^2)\, \, dv \ge \int\limits_U (|d\omega|^2+|d^{*}\omega|^2)\, \, dv > 0\, ,$ a contradiction. For $1 < q < 2$, either $\omega \ne 0$ at a point in $U$, and hence in an open set $V \subset U\, ,$ or $\omega \equiv 0$ in $U$ would lead to $\int\limits_M |\omega|^{q-2}(|d\omega|^2+|d^{*}\omega|^2)\, dv \ge \int\limits_V |\omega|^{q-2}(|d\omega|^2+|d^{*}\omega|^2)\, dv > 0\, ,$ a contradiction. This completes the proof. 
\end{proof}
\begin{remark}\label{R:3.2} The case $\omega$ is a harmonic form, the first integral and second integral in \eqref{1.13}
are treated in \cite [p.664, (2.25), (2.26)]{Y1}. Since the codifferential $d^{*}$ is not linear with respect to multiplication by functions (cf. Remark \ref{R:3.1}), based on Lemma \ref{L:3.1}, we include the third integral $\big (\operatorname{in} \eqref{1.13} \big )$ $\int _{B(t)}\langle
(-1)^{nk+n+1}\star \bigg (d \big (\psi^2(|\omega|^2+\epsilon)^{\frac{q}{2}-1}\big )\wedge\star\omega \bigg ),
d^{*}\omega\rangle \, dv$ and make estimates on this third integral in \eqref{1.16}.  
In contrast to the case $q=2\, ,$ the term $\int_{B(t)} |\frac{q}{2}-1|\psi^2(|\omega|^2+\epsilon)^{\frac{q}{2}-2}|\langle d(|\omega|^2+\epsilon)\wedge\star\omega, \star d^{*}\omega\rangle|\, dv $ in \eqref{1.16} does not vanish for $q\ne 2$ and we use $\operatorname{Condition} \operatorname{W}\, $ \eqref{1.2} to evaluate the factor $|\langle d(|\omega|^2+\epsilon)\wedge\star\omega, \star d^{*}\omega\rangle|$ (cf. \eqref{1.15}) in the integrand of this term to obtain the estimates in \eqref{1.16}. Similarly, in computing the first integral in \eqref{1.13} , $- \int _{B(t)} \langle
2\psi(|\omega|^2+\epsilon)^{\frac{q}{2}-1}d\psi\wedge \omega +
\psi^2 d(|\omega|^2+\epsilon)^{\frac{q}{2}-1}\wedge
 \omega  +
 \psi^2(|\omega|^2+\epsilon)^{\frac{q}{2}-1}d\omega,d\omega\rangle\,  dv$, the term $\int_{B(t)}(\frac {q}{2}-1)\psi^2(|\omega|^2+\epsilon)^{\frac{q}{2}-2}\langle d|\omega|^2 \wedge \omega, d\omega \rangle \, dv$ in \eqref{1.14} does not vanish for $q\ne 2$ and we use $\operatorname{Condition} \operatorname{W}\, $ \eqref{1.2} to evaluate the factor $\langle d|\omega|^2 \wedge \omega, d\omega \rangle$  to obtain the estimates in \eqref{1.14}. \end{remark}

\begin{remark}\label{R:3.3} 
When  $q=2\, ,$ the above term in the first integral, $\int_{B(t)} |\frac{q}{2}-1|\psi^2(|\omega|^2+\epsilon)^{\frac{q}{2}-2}|\langle d(|\omega|^2+\epsilon)\wedge\star\omega, \star d^{*}\omega\rangle|\, dv = 0\, ,$ and the term in the third integral, $\int_{B(t)}(\frac {q}{2}-1)\psi^2(|\omega|^2+\epsilon)^{\frac{q}{2}-2}\langle d|\omega|^2 \wedge \omega, d\omega \rangle \, dv = 0\, .$ 
Thus, no $\operatorname{Condition} \operatorname{W} \eqref{1.2} $ is needed to make estimates. This explains why if $\omega$ is in $L^2$ or is $2$-balanced for $q=2\, ,$ $\operatorname{Condition} \operatorname{W} \eqref{1.2} $ is not involved at all.
\end{remark}

\begin{theorem}\label{T:3.2} Suppose $\omega \in A^k$ has $2$-mild growth $\big ($$\eqref{2.2}, p=2 \big )$, for $q=2\, ,$ or for $1 < q(\ne2) < 3$ with $\operatorname{Condition} \operatorname{W}$ \eqref{1.2} $\big (\operatorname{cf}$. \eqref{1.3}$\big )$.
Then $\omega$ is a solution of $\langle\omega, \Delta \omega\rangle \ge 0$ $(\operatorname{resp}.\, \omega$ is harmonic$)$  on $M$ if and only if $\omega$ is closed and co-closed. Or equivalently,  \eqref {1.6} $\big(  \operatorname{resp}. \eqref{1.1} \big)$ holds.
\end{theorem}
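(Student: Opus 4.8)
The plan is to run the cut-off argument from the proof of Theorem \ref{T:3.1} essentially line for line, changing only the choice of the exhausting sequence $\{r_j\}$ and the way the recursive inequality \eqref{1.21} is used at the end. The backward implication $(\Leftarrow)$ is immediate, since $d\omega=d^{*}\omega=0$ gives $\Delta\omega=0$ and $\langle\omega,\Delta\omega\rangle=0\ge0$. For $(\Rightarrow)$, first take $\{r_j\}_{0}^{\infty}$ to be the strictly increasing sequence tending to $\infty$ furnished by the hypothesis that $\omega$ has $2$-mild growth \eqref{2.2} (with $p=2$), so that for every $\ell_0>0$,
\[
\sum_{j=\ell_0}^{\infty}\frac{(r_{j+1}-r_j)^{2}}{\displaystyle\int_{B(x_0;r_{j+1})\backslash B(x_0;r_j)}|\omega|^{q}\,dv}=\infty .
\]
Then carry out the computation of Theorem \ref{T:3.1} verbatim with the cut-offs $\varphi_j(x)=\psi(x;r_j,r_{j+1})$: the identities \eqref{1.12}--\eqref{1.13} and Lemma \ref{L:3.1}, the use of Condition $\operatorname{W}$ \eqref{1.2} when $q\ne2$, the Cauchy--Schwarz steps \eqref{1.14}--\eqref{1.17}, the choice $1<q<3$ so that $1-|q-2|>0$, and the inequalities \eqref{1.19}--\eqref{1.21} are all unchanged. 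The only point to note is that no doubling condition $r_{j+1}\ge2r_j$ is imposed (it is not needed here), so in \eqref{1.21} one simply records that $r_{j+1}^{2}A_{j+1}(\E)-r_j^{2}A_j(\E)=\int_{B(r_{j+1})\backslash B(r_j)}(|\omega|^{2}+\E)^{q/2}\,dv=:T_{j+1}(\E)$, obtaining
\[
Q_{j+1}^{2}(\E)\ \le\ C\,\frac{T_{j+1}(\E)}{(r_{j+1}-r_j)^{2}}\bigl(Q_{j+1}(\E)-Q_j(\E)\bigr),\qquad C=\Bigl(\tfrac{2\sqrt2\,C_1}{1-|q-2|}\Bigr)^{2}.
\]

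Since $\overline{B(r_{j+1})}$ is compact, $Q_{j+1}(\E)$ and $T_{j+1}(\E)$ are finite, and the displayed inequality already forces $Q_{j+1}(\E)\le C\,T_{j+1}(\E)(r_{j+1}-r_j)^{-2}<\infty$. Letting $\E\to0$ exactly as in Theorem \ref{T:3.1} (monotonicity of the relevant quantities in $\E$ together with the Monotone Convergence Theorem), and writing $Q_j:=Q_j(0)$, $T_{j+1}:=\int_{B(r_{j+1})\backslash B(r_j)}|\omega|^{q}\,dv$, one obtains $Q_{j+1}\le C\,T_{j+1}(r_{j+1}-r_j)^{-2}<\infty$ and
\[
Q_{j+1}^{2}\ \le\ C\,\frac{T_{j+1}}{(r_{j+1}-r_j)^{2}}\bigl(Q_{j+1}-Q_j\bigr),
\]
with $\{Q_j\}$ nondecreasing because $\varphi_{j-1}\le\varphi_j$.

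Now the recursion is exploited by telescoping rather than by the summable-series argument of Theorem \ref{T:3.1}. If $Q_{\ell_0}>0$ for some $\ell_0$, then $Q_j\ge Q_{\ell_0}>0$ for all $j\ge\ell_0$ (and hence $T_{j+1}>0$ there, by the recursion), so dividing by $Q_{j+1}^{2}$ and using $Q_{j+1}\ge Q_j>0$,
\[
\frac{(r_{j+1}-r_j)^{2}}{C\,T_{j+1}}\ \le\ \frac{Q_{j+1}-Q_j}{Q_{j+1}^{2}}\ \le\ \frac{Q_{j+1}-Q_j}{Q_{j+1}Q_j}\ =\ \frac1{Q_j}-\frac1{Q_{j+1}},
\]
whence $\sum_{j=\ell_0}^{\infty}(r_{j+1}-r_j)^{2}/T_{j+1}\le C/Q_{\ell_0}<\infty$, contradicting \eqref{2.2}. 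Therefore $Q_j=0$ for every $j$, and since $\varphi_j\uparrow1$, the Monotone Convergence Theorem gives $\int_M|\omega|^{q-2}\bigl(|d\omega|^{2}+|d^{*}\omega|^{2}\bigr)\,dv=\lim_j Q_{j+1}=0$.

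From the vanishing of $\int_M|\omega|^{q-2}(|d\omega|^{2}+|d^{*}\omega|^{2})\,dv$ the conclusion $d\omega=d^{*}\omega\equiv0$ on $M$, and hence $\Delta\omega=0$, follows exactly as in the last paragraph of the proof of Theorem \ref{T:3.1}, by the local argument distinguishing the cases $q=2$, $q>2$, and $1<q<2$; this proves \eqref{1.6} (resp.\ \eqref{1.1}). I do not expect a genuine obstacle: the only substantive new step is the telescoping estimate $\frac{Q_{j+1}-Q_j}{Q_{j+1}^{2}}\le\frac1{Q_j}-\frac1{Q_{j+1}}$ matched against the defining series \eqref{2.2} of $2$-mild growth, while the delicate points --- the $\E\to0$ limit, the inequality $\int_{B(r_{j+1})\backslash B(r_j)}\varphi_j^{2}(\cdots)\le Q_{j+1}-Q_j$ coming from $\varphi_{j-1}\le\varphi_j\equiv1$ on $B(r_j)$, and the local argument for $1<q<2$ when $\omega$ may have zeros --- are already settled in the proof of Theorem \ref{T:3.1} and can simply be quoted.
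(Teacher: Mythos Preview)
Your proof is correct and follows essentially the same approach as the paper: both reduce to bounding $\sum_{j\ge \ell_0}\frac{(r_{j+1}-r_j)^2}{\int_{B(r_{j+1})\backslash B(r_j)}|\omega|^q\,dv}$ by $C/Q_{\ell_0}$ via the recursive inequality \eqref{1.26}, contradicting $2$-mild growth. The only difference is that you bound $\sum\frac{Q_{j+1}-Q_j}{Q_{j+1}^{2}}$ by the clean telescoping inequality $\frac{Q_{j+1}-Q_j}{Q_{j+1}Q_j}=\frac{1}{Q_j}-\frac{1}{Q_{j+1}}$, whereas the paper reaches the same bound $1/Q_{\ell_0}$ by an integer-step partition and comparison with $\int_{Q_{\ell_0}}^{\infty}r^{-2}\,dr$; your version is slightly more direct.
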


\begin{proof}
($\Leftarrow$) It is evident. $\quad $ $(\Rightarrow)$
 If either $d\omega\neq 0\, ,$ or $d^{*}\omega\neq 0$,
 then via \eqref{1.20}, for every strictly increasing sequence
$\{r_j\}_{1}^{\infty}$ going to infinity, there exists an integer
$l_0
> 0\, ,$ with $r_{l_0}  \ge a\, ,$ (where $a$ is a fixed positive number independent of the sequence) such that
\begin{equation}\label{1.27}
\begin{array}{lll}
Q_{j+1} \ge Q_{l_0} \ge {Q_a} := \int_{B(a)}|\omega|^{q-2}(|d\omega|^2+|d^{*}\omega|^2)\, dv> 0, \\
Q_{j+1}-Q_j >
0,\\
A_j > 0\, , \quad {\rm and}\quad r_{j+1}^2 A_{j+1}-r_j^2 A_j > 0\quad {\rm whenever}\quad j \ge l_0\, .
\end{array}
\end{equation}
Let $n_j$ be the largest nonnegative integer such that $ ( Q_{j+1} - n_j -1< )\, Q_j \le Q_{j+1} - n_j$. 
Multiplying \eqref{1.26} by
$\frac{(r_{j+1}-r_j)^2}{Q^2_{j+1}(r^2_{j+1}A_{j+1}-r^2_jA_j)}\, ,$
and summing over $j$ from $j =l_0$ to $l\, ,$ we have
\begin{equation}\label{1.28}
\begin{array}{rll}
& \sum_{j=l_0}^{l}&\frac{(r_{j+1}-r_j)^2}{r^2_{j+1}A_{j+1}-r^2_jA_j}\\
&\leq& C\sum_{j=l_0}^{l}\frac{Q_{j+1}-Q_j}{Q^2_{j+1}} \\
&\leq& C\sum_{j=l_0}^{l}\frac{\big (Q_{j+1}-(Q_{j+1}-1)\big ) + \cdots + \big ((Q_{j+1} -n_{j}+1)-(Q_{j+1} -n_{j})\big ) + \big ((Q_{j+1} -n_{j}) - Q_j \big ) }{Q^2_{j+1}} \\
&\leq& C\sum_{j=l_0}^{l} \frac{1}{Q^2_{j+1}} + \cdots + \frac{1}{(Q_{j+1}-n_j+1)^2} + \frac{(Q_{j+1}-n_j)-Q_j}{(Q_{j+1}-n_j)^2} \\
&\leq& C\sum_{j=l_0}^{l} \int_{Q_{j}}^{Q_{j+1}}\frac{1}{r^2}\, dr \\
&<& C\int_{Q_{l_0}}^{\infty}\frac{1}{r^2}\, dr \\
&=& C\frac{1}{Q_{l_0}} \\
&<& \frac{C}{Q_a} 
\end{array}
\end{equation}
Letting $l\rightarrow\infty$, we obtain
\[
\sum\limits_{j=\ell_0}^{\infty}
\bigg(\frac{(r_{j+1}-r_j)^2}{\int_{B(x_0;r_{j+1})\backslash
B(x_0;r_{j})}|\omega|^q\, dv}\bigg)\, =
\sum_{j=l_0}^{\infty}\frac{(r_{j+1}-r_j)^2}{r^2_{j+1}A_{j+1}-r^2_jA_j}
\le \frac{C}{Q_a}<\infty
\]
which contradicts the assumption that $\omega$ has $2$-mild
growth for $1 < q < 3$. Consequently, $d\omega=d^{*}\omega=0\, $ on $M\, .$
\end{proof}

\begin{theorem}\label{T:3.3} Suppose $\omega \in A^k$ has $2$--obtuse growth $\big ($$\eqref{2.3}, p=2 \big )$, for $q=2\, ,$ or for $1 < q(\ne2) < 3$ with $\operatorname{Condition} \operatorname{W}$ \eqref{1.2} $\big (\operatorname{cf}$. \eqref{1.3}$\big )$.
Then $\omega$ is a solution of $\langle\omega, \Delta \omega\rangle \ge 0$ $(\operatorname{resp}.\, \omega$ is harmonic$)$  on $M$ if and only if $\omega$ is closed and co-closed. Or equivalently,  \eqref {1.6} $\big(  \operatorname{resp}. \eqref{1.1} \big)$ holds.
\end{theorem}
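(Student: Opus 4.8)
The plan is to follow the proofs of Theorems \ref{T:3.1} and \ref{T:3.2}, keeping the functional inequality \eqref{1.17} but replacing the linear cut-off there by the radial cut-off adapted to $\omega$ that is optimal for the $2$-obtuse condition \eqref{2.3}. The direction $(\Leftarrow)$ is trivial: $d\omega=d^{*}\omega=0$ gives $\Delta\omega=-(dd^{*}+d^{*}d)\omega=0$, hence $\langle\omega,\Delta\omega\rangle\equiv 0\ge 0$, and the remaining equivalences in \eqref{1.6} (resp. \eqref{1.1}) are immediate. For $(\Rightarrow)$ I argue by contradiction: assume $\omega$ solves $\langle\omega,\Delta\omega\rangle\ge 0$ (resp. $\Delta\omega=0$) but $d\omega\neq 0$ or $d^{*}\omega\neq 0$. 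Starting from $0\le\int_{B(t)}\langle\psi^{2}(|\omega|^{2}+\E)^{q/2-1}\omega,\Delta\omega\rangle\,dv$, the computation in the proof of Theorem \ref{T:3.1} yields \eqref{1.17} for \emph{every} Lipschitz $\psi$ with $\psi\equiv 1$ on $B(s)$, $\psi\equiv 0$ off $B(t)$, $0\le\psi\le 1$; the bound $|\nabla\psi|\le C_{1}/(t-s)$ enters only afterwards in \eqref{1.19}, and $\operatorname{Condition}\,\operatorname{W}\,\eqref{1.2}$ is used exactly as in \eqref{1.14}--\eqref{1.16} when $q\neq 2$ (and is dispensed with when $q=2$).

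Next, fix $x_{0}\in M$ and $0<s<t$, set $h_{\E}(r)=\int_{\partial B(x_{0};r)}(|\omega|^{2}+\E)^{q/2}\,ds$ ($>0$ for $\E>0$), and take $\psi$ depending only on $\rho(x):=\operatorname{dist}(x_{0},x)$, with $\psi\equiv 1$ on $B(s)$, $\psi\equiv 0$ off $B(t)$, and $\psi=\big(\int_{\rho}^{t}\tfrac{dr}{h_{\E}(r)}\big)\big/\big(\int_{s}^{t}\tfrac{dr}{h_{\E}(r)}\big)$ for $s\le\rho\le t$ (the case $\int_{s}^{t}\tfrac{dr}{h_{\E}(r)}=\infty$ makes the estimate below vacuous). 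By the coarea formula, $\int_{B(t)\setminus B(s)}|\nabla\psi|^{2}(|\omega|^{2}+\E)^{q/2}\,dv=\big(\int_{s}^{t}\tfrac{dr}{h_{\E}(r)}\big)^{-1}$.

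Writing $G_{\E}(t)=\int_{B(t)}\psi^{2}(|\omega|^{2}+\E)^{q/2-1}(|d\omega|^{2}+|d^{*}\omega|^{2})\,dv$ and $\Phi_{\E}(r)=\int_{B(r)}(|\omega|^{2}+\E)^{q/2-1}(|d\omega|^{2}+|d^{*}\omega|^{2})\,dv$, the left side of \eqref{1.17} equals $(1-|q-2|)G_{\E}(t)$ while its right-hand annular factor equals $G_{\E}(t)-\Phi_{\E}(s)\ge 0$ (because $\psi\equiv 1$ on $B(s)$ and $0\le\psi\le 1$). Taking $1<q<3$, squaring \eqref{1.17}, and using $\Phi_{\E}(s)\le G_{\E}(t)$, one gets
\[
\int_{s}^{t}\frac{dr}{h_{\E}(r)}\ \le\ \frac{8}{(1-|q-2|)^{2}}\cdot\frac{G_{\E}(t)-\Phi_{\E}(s)}{G_{\E}(t)^{2}}\ \le\ \frac{8}{(1-|q-2|)^{2}}\Big(\frac{1}{\Phi_{\E}(s)}-\frac{1}{G_{\E}(t)}\Big)\ \le\ \frac{8}{(1-|q-2|)^{2}\,\Phi_{\E}(s)}\,.
\]
Fixing $s=a$ large enough that $\Phi_{\E}(a)>0$ (possible since $d\omega\neq 0$ or $d^{*}\omega\neq 0$), letting $t\to\infty$, and then letting $\E\to 0$ exactly as in the Monotone Convergence argument closing the proof of Theorem \ref{T:3.1}, one obtains $\int_{a}^{\infty}\big(\int_{\partial B(x_{0};r)}|\omega|^{q}\,ds\big)^{-1}dr<\infty$, contradicting that $\omega$ has $2$-obtuse growth \eqref{2.3} for $1<q<3$. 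Hence $d\omega=d^{*}\omega\equiv 0$ on $M$, so $\Delta\omega=0$ and $\langle\omega,\Delta\omega\rangle\equiv 0$, which gives \eqref{1.6} (resp. \eqref{1.1}).

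The step I expect to be the main obstacle is the $\E\to 0$ passage when $1<q<2$, where $(|\omega|^{2}+\E)^{q/2-1}$ blows up on the zero set of $\omega$: as in Theorem \ref{T:3.1} this is handled by localizing to a small open set on which $d\omega$ (or $d^{*}\omega$) is nonvanishing — hence so is $\omega$ there when $q>2$, and for $1<q<2$ one splits further according to whether $\omega$ vanishes identically on it — so that $\Phi_{\E}(a)$ stays bounded away from $0$ and $\int_{B(a)}|\omega|^{q-2}(|d\omega|^{2}+|d^{*}\omega|^{2})\,dv$ stays finite. A minor point is that the radial $\psi$ above is admissible for \eqref{1.17}, being Lipschitz away from $\rho\in\{s,t\}$ (or after a harmless smoothing), while the degenerate case $\int_{s}^{t}\tfrac{dr}{h_{\E}(r)}=\infty$ is vacuous.
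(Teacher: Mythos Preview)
Your proof is correct and takes a genuinely different route from the paper's. The paper reuses the discrete inequality \eqref{1.26} from the proof of Theorem~\ref{T:3.1} (with the \emph{linear} cut-off $\psi(x;r_j,r_{j+1})$), divides through by $r_{j+1}-r_j$, and then passes formally to the limit $r_{j+1}\to r_j$ to obtain the differential inequality $\big(\tfrac{d}{dr}(r^2 A_r)\big)^{-1}\le C\,Q_r^{-2}\tfrac{d}{dr}Q_r$; integrating this and using the coarea formula $\tfrac{d}{dr}(r^2A_r)=\int_{\partial B(x_0;r)}|\omega|^q\,ds$ gives the finiteness of $\int_a^\infty\big(\int_{\partial B(x_0;r)}|\omega|^q\,ds\big)^{-1}dr$, contradicting $2$-obtuse growth. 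Your approach instead plugs the \emph{optimal} radial cut-off $\psi(\rho)=\big(\int_\rho^t h_\E^{-1}\big)/\big(\int_s^t h_\E^{-1}\big)$ directly into \eqref{1.17}; the coarea identity $\int|\nabla\psi|^2(|\omega|^2+\E)^{q/2}\,dv=\big(\int_s^t h_\E^{-1}\big)^{-1}$ then yields the integral bound in one stroke, and the telescoping step $\tfrac{G-\Phi}{G^2}\le \tfrac{1}{\Phi}-\tfrac{1}{G}$ replaces the paper's integration of the differential inequality.

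What each buys: the paper's argument recycles the already-established \eqref{1.26} verbatim, so it is short given the earlier work, but the passage from the discrete to the differential inequality is somewhat formal. Your argument is more self-contained and avoids that limit entirely; it is the classical Karp-type choice of cut-off tailored to the obtuse condition. Two small remarks: for $\E>0$ the function $h_\E$ is strictly positive and continuous on $[s,t]$, so the ``degenerate'' case $\int_s^t h_\E^{-1}=\infty$ never actually occurs and need not be mentioned; and for the $\E\to 0$ step it suffices to note that $h_\E(r)$ is increasing in $\E$ (so $1/h_\E$ increases monotonically to $1/h_0$ as $\E\downarrow 0$) while $\Phi_\E(a)\to \Phi_0(a)=Q_a>0$ by the same case analysis you cite from the end of Theorem~\ref{T:3.1}.
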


\begin{proof}
($\Leftarrow$) It is apparent. $\quad $ $(\Rightarrow)$
 If either $d\omega\neq 0\, ,$ or $d^{*}\omega\neq 0$,
 then based on \eqref{1.20} and \eqref {1.26}, we have
\[
\frac {1}{\frac{r^2_{j+1}A_{j+1}-r^2_jA_j}{r_{j+1}-r_j}}\leq 
C\frac{\frac{Q_{j+1}-Q_j}{r_{j+1}-r_j}}{Q^2_{j+1}}
\]
Since $\{r_j\}$ is arbitrary in the above inequality, we can set a
variable $r=r_j$, and let $r_{j+1}\rightarrow r=r_j$, and obtain
\[
\frac{1}{\frac{d}{dr}(r^2A_r)}\leq 
C\frac{\frac{d}{dr}Q_r}{Q^2_r}
\]
Integrating the above inequality over the interval $[a,t]$
yields:
\[
\int^{t}_{a}(\frac{1}{\frac{d}{dr}(r^2A_r)})dr\leq \frac{
C}{Q_a}<\infty
\]
Letting $t\rightarrow\infty$, by the Coarea formula, we get:
\begin{eqnarray*}
\int_{a}^{\infty}(\frac{1}{\int_{\partial
B(x_0;r)}|\omega|^{q}ds})\, dr
&=&\int_{a}^{\infty}(\frac{1}{\frac{d}{dr}\int_{B(x_0;r)}|\omega|^{q}\, dv})\, dr \\
&=& \int_{a}^{\infty}(\frac{1}{\frac{d}{dr}(r^2A_r)})\, dr\\
&<& \infty
\end{eqnarray*}
which contradicts the assumption that $\omega$ has $2$-obtuse
growth, for $1 < q < 3$. Consequently, $d\omega=d^{*}\omega=0\, $ on $M\, .$
\end{proof}

\begin{theorem}\label{T:3.4} Suppose $\omega \in A^k$ has $2$-moderate growth $\big ($$\eqref{2.4}, p=2 \big )$, for $q=2\, ,$ or for $1 < q(\ne2) < 3$ with $\operatorname{Condition} \operatorname{W}$ \eqref{1.2} $\big (\operatorname{cf}$. \eqref{1.3}$\big )$.
Then $\omega$ is a solution of $\langle\omega, \Delta \omega\rangle \ge 0$ $(\operatorname{resp}.\, \omega$ is harmonic$)$  on $M$ if and only if $\omega$ is closed and co-closed. Or equivalently,  \eqref {1.6} $\big(  \operatorname{resp}. \eqref{1.1} \big)$ holds.
\end{theorem}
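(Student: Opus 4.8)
The plan is to follow verbatim the template of the proofs of Theorems \ref{T:3.1}--\ref{T:3.3}. The implication ($\Leftarrow$) in \eqref{1.6} (resp. \eqref{1.1}) is immediate, since a closed and co-closed form is harmonic and then $\langle\omega,\Delta\omega\rangle\equiv 0$; so the content is ($\Rightarrow$), which I would argue by contradiction. Suppose $\omega$ solves $\langle\omega,\Delta\omega\rangle\ge 0$ (resp. $\Delta\omega=0$) but $d\omega\ne 0$ or $d^{*}\omega\ne 0$ somewhere on $M$. The first step is to observe that the cut-off computation in the proof of Theorem \ref{T:3.1} uses no growth hypothesis at all until its last lines: under \eqref{1.3} it already yields the master inequality \eqref{1.26},
\[
Q_{j+1}^2\le C\Big(\tfrac{r_{j+1}^2A_{j+1}-r_j^2A_j}{(r_{j+1}-r_j)^2}\Big)\big(Q_{j+1}-Q_j\big),
\]
for \emph{every} strictly increasing $\{r_j\}\to\infty$, where $A_j,Q_j$ are the $\epsilon\to 0$ limits of the quantities in \eqref{1.20}. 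This is the only place the hypothesis \eqref{1.3} is spent: for $q\ne 2$ one uses $\operatorname{Condition}\operatorname{W}\eqref{1.2}$ exactly to control the two extra integrands isolated in Remarks \ref{R:3.2} and \ref{R:3.3}, while for $q=2$ those integrands vanish identically. Nothing new is needed here for the moderate case; I would simply quote \eqref{1.26}.

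Next I would carry out the summation step of Theorem \ref{T:3.2}. Since $d\omega$ or $d^{*}\omega$ is nonzero on an open set, there is $a>0$ with $Q_a:=\int_{B(a)}|\omega|^{q-2}(|d\omega|^2+|d^{*}\omega|^2)\,dv>0$, and $\{Q_j\}$ is positive and strictly increasing once $r_j\ge a$. Multiplying \eqref{1.26} by $\dfrac{(r_{j+1}-r_j)^2}{Q_{j+1}^2\,(r_{j+1}^2A_{j+1}-r_j^2A_j)}$, summing in $j$, telescoping via $\sum_j\frac{Q_{j+1}-Q_j}{Q_{j+1}^2}\le\sum_j\big(\frac{1}{Q_j}-\frac{1}{Q_{j+1}}\big)\le\frac{1}{Q_a}$, and using $r_{j+1}^2A_{j+1}-r_j^2A_j=\int_{B(x_0;r_{j+1})\setminus B(x_0;r_j)}|\omega|^q\,dv$, I would obtain
\[
\sum_{j}\frac{(r_{j+1}-r_j)^2}{\int_{B(x_0;r_{j+1})\setminus B(x_0;r_j)}|\omega|^q\,dv}\ \le\ \frac{C}{Q_a}\ <\ \infty
\]
for \emph{every} strictly increasing $\{r_j\}\to\infty$.

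The last step is to contradict this with $2$-moderate growth. By \eqref{2.4} with $p=2$ there are $x_0$, $K>0$ and $\psi\in{\mathcal F}$ with $\int_{B(x_0;r)}|\omega|^q\,dv\le K\,r^2\psi(r)$ for all large $r$, while $\int_{a'}^{\infty}\frac{dr}{r\psi(r)}=\infty$ for some $a'$. It then suffices to produce one strictly increasing $\{r_j\}\to\infty$ with $\sum_j\frac{(r_{j+1}-r_j)^2}{r_{j+1}^2\psi(r_{j+1})}=\infty$: for such a sequence $\int_{B(r_{j+1})\setminus B(r_j)}|\omega|^q\,dv\le Kr_{j+1}^2\psi(r_{j+1})$ forces the series above to diverge, a contradiction, whence $d\omega=d^{*}\omega=0$ on $M$ and \eqref{1.6} (resp. \eqref{1.1}) follows. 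The slickest way to package the last move is to invoke Proposition \ref{P:2.1}, by which $2$-moderate growth implies $2$-mild growth, so that Theorem \ref{T:3.2} applies directly --- the $2$-mild condition \eqref{2.2} with $p=2$ being precisely the assertion that the required sequence exists. The one step I expect to have to think about, rather than copy, is the construction of that sequence: because the functions of ${\mathcal F}$ need not be monotone (cf. the parenthetical after \eqref{2.5}), the naive geometric choice $r_{j+1}=2r_j$ need not work --- it would require $\sum_j\psi(2^jr_0)^{-1}=\infty$, which can fail --- so the $r_j$ must be chosen adaptively so as to fall into the regions where $r\psi(r)$ stays small; that this can always be arranged is exactly the implication ``$2$-moderate $\Rightarrow$ $2$-mild'' in Proposition \ref{P:2.1}. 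Beyond that sequence-selection lemma, the whole argument reduces to reusing \eqref{1.26} (the analytic heart, together with the role of $\operatorname{Condition}\operatorname{W}$, already being established in the proof of Theorem \ref{T:3.1}).
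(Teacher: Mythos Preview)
Your proposal is correct but proceeds differently from the paper. You reduce Theorem~\ref{T:3.4} to Theorem~\ref{T:3.2} by invoking Proposition~\ref{P:2.1} (``$2$-moderate $\Rightarrow$ $2$-mild''), treating the construction of an admissible sequence $\{r_j\}$ as a black box. The paper instead gives a direct, self-contained argument: it \emph{does} take the dyadic sequence $r_{j+1}=2r_j$ and, contrary to your worry, this choice works. The point you missed is that one need not establish $\sum_j \psi(2^j r_0)^{-1}=\infty$; rather, the paper bounds the series $\sum_j \frac{(r_{j+1}-r_j)^2}{r_{j+1}^2 A_{j+1}-r_j^2 A_j}$ from \emph{below} by the integral $\frac{1}{4K'}\int_{r_{l_0+1}}^{r_{l+2}} \frac{dr}{r\psi(r)}$ via the Mean Value Theorem for integrals applied to $\int_{r_{j+1}}^{r_{j+2}} \frac{dr}{rA_r}$, together with the pointwise bound $\frac{1}{rA_r}\ge \frac{1}{K'}\frac{1}{r\psi(r)}$ that follows from the $2$-moderate hypothesis. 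Since $\psi\in\mathcal F$ forces the integral to diverge, this contradicts the universal upper bound $C/Q_a$ from \eqref{1.28}. Your route is logically clean and emphasizes the hierarchy of growth conditions; the paper's route is more hands-on and avoids appealing to Proposition~\ref{P:2.1}, at the cost of the MVT step --- and, incidentally, it is this same computation (inequality \eqref{1.30}) that the paper reuses in proving Theorem~\ref{T:3.5}.
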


\begin{proof}
($\Leftarrow$) It is clear. $\quad $ $(\Rightarrow)$ 
Since $\omega$ has $2$-moderate growth, for $1 < q < 3,$ we may assume, by the definition of the limit superior of functions,  there
exists a $\psi\in \mathcal{F} \big (\eqref{2.5}, p=2 \big )$, and a constant $K^{\prime} > 0$ such that
$\frac {1}{\psi(r)} A_r \le K^{\prime}$ for $r > \ell_0\, .$ This implies that
 \begin{equation}\label{1.29}
 \frac {1}{r A_r } \ge  \frac {1}{K^{\prime}}\frac {1}{r\psi(r)} \quad \operatorname{for} \quad r > \ell_0\, .\end{equation}
 If either $d\omega\neq 0\, ,$ or
$d^{*}\omega\neq 0$, then in view of \r{1.27} and \r{1.29} for a strictly
increasing sequence $\{r_j\}$ with $r_{j+1}=2r_j\, ,$ and for any
$\psi(r)>0\, ,$ we obtain
 \begin{equation}\label{1.30}
 \begin{array}{rll}
 \sum_{j=l_0}^{l}\frac{(r_{j+1}-r_j)^2}{r^2_{j+1}A_{j+1}-r^2_jA_j}
 &\ge&
 \sum_{j=l_0}^{l}\frac{r_{j+1}-r_j}{r^2_{j+1}A_{j+1}}\cdot (r_{j+1}-r_{j})\\
 &=&
 \sum_{j=l_0}^{l}\frac{\frac{1}{2}}{r_{j+1}A_{j+1}}\cdot \frac 12 (r_{j+2}-r_{j+1})\\
 &\geq& \frac{1}{4}\sum_{j=l_0}^{l}\int_{r_{j+1}}^{r_{j+2}} \frac{1}{rA_r}\, dr
 \\
 &\ge& \frac{1}{4K^{\prime}}\int_{r_{l_0+1}}^{r_{l+2}}\frac{1}{r \psi(r)} dr
 \end{array}
 \end{equation}
 where we have applied the Mean Value Theorem for integrals to
 $
 \int_{r_{j+1}}^{r_{j+2}}\frac{1}{rA_r}dr
 $
in the third step. Combining \eqref{1.30} and \r{1.28}, and
letting $l\rightarrow\infty$, we would have
\[
\int_{r_{l_0+1}}^{\infty}\frac{1}{r \psi(r)} dr<\infty
\]
contradicting  $\psi\in \mathcal{F}$.
Consequently, $d\omega=d^{*}\omega=0\, $ on
$M\, .$ 
\end{proof}

\begin{theorem}\label{T:3.5} Suppose $\omega \in A^k$ has $2$-small growth $\big ($$\eqref{2.6}, p=2 \big )$, for $q=2\, ,$ or for $1 < q(\ne2) < 3$ with $\operatorname{Condition} \operatorname{W}$ \eqref{1.2} $\big (\operatorname{cf}$. \eqref{1.3}$\big )$.
Then $\omega$ is a solution of $\langle\omega, \Delta \omega\rangle \ge 0$ $(\operatorname{resp}.\, \omega$ is harmonic$)$ on $M$ if and only if $\omega$ is closed and co-closed. Or equivalently,  \eqref {1.6} $\big( \operatorname{resp}. \eqref{1.1} \big)$ holds.
\end{theorem}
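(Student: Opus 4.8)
The plan is to argue by contradiction, reusing the estimates already assembled in the proofs of Theorems \ref{T:3.1} and \ref{T:3.2} and feeding them a dyadic sequence of radii.

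The implication $(\Leftarrow)$ is immediate: if $d\omega=d^{*}\omega=0$ then $\Delta\omega=-(dd^{*}+d^{*}d)\omega=0$, so $\langle\omega,\Delta\omega\rangle\equiv 0\ge 0$. For $(\Rightarrow)$, suppose $\omega$ solves $\langle\omega,\Delta\omega\rangle\ge 0$ $\big(\operatorname{resp}.\ \Delta\omega=0\big)$ while $d\omega\ne 0$ or $d^{*}\omega\ne 0$ $\big($the case $\omega\equiv 0$ being trivial$\big)$. As in the proofs of Theorems \ref{T:3.1} and \ref{T:3.2}, I would first fix $a>0$ with $Q_a:=\int_{B(a)}|\omega|^{q-2}\big(|d\omega|^2+|d^{*}\omega|^2\big)\,dv>0$, so that \eqref{1.27} is available; and I note that the hypotheses of the present theorem ($1<q<3$, together with Condition $\operatorname{W}$ \eqref{1.2} when $q\ne 2$) are precisely those under which the master inequality \eqref{1.26} was derived, so \eqref{1.26} holds along every sequence $\{r_j\}$ with $r_{j+1}\ge 2r_j$.

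The new ingredient is to run \eqref{1.26} along the dyadic sequence $r_{j+1}=2r_j$. The same telescoping computation that produced \eqref{1.28} then gives $\sum_{j\ge\ell_0}\frac{(r_{j+1}-r_j)^2}{r_{j+1}^2A_{j+1}-r_j^2A_j}\le \frac{C}{Q_a}<\infty$. Since $r_{j+1}=2r_j$ forces $(r_{j+1}-r_j)^2=r_j^2$ and $0<r_{j+1}^2A_{j+1}-r_j^2A_j\le r_{j+1}^2A_{j+1}=4r_j^2A_{j+1}$, each summand is $\ge\frac{1}{4}A_{j+1}^{-1}$, so $\sum_{j\ge\ell_0}A_{j+1}^{-1}\le 4C\,Q_a^{-1}<\infty$. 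It then remains to translate this series bound into the integral appearing in the definition \eqref{2.6} of $2$-small growth: for $p=2$ the integrand there is $\frac{r}{\int_{B(r)}|\omega|^q\,dv}=\frac{1}{rA_r}$, and since $r\mapsto r^2A_r=\int_{B(r)}|\omega|^q\,dv$ is nondecreasing, on each annulus $[r_j,r_{j+1}]$ one has $rA_r\ge r_j^2A_j/r_{j+1}=r_jA_j/2$, hence $\int_{r_j}^{r_{j+1}}\frac{dr}{rA_r}\le \frac{2(r_{j+1}-r_j)}{r_jA_j}=\frac{2}{A_j}$; summing over $j\ge\ell_0$ gives $\int_{r_{\ell_0}}^{\infty}\frac{dr}{rA_r}\le 2\sum_{j\ge\ell_0}A_j^{-1}<\infty$, contradicting the assumption that $\omega$ has $2$-small growth for $1<q<3$. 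Hence $d\omega=d^{*}\omega=0$ on $M$, so $\Delta\omega=0$ and \eqref{1.6} $\big(\operatorname{resp}.\ \eqref{1.1}\big)$ holds.

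I do not expect a real obstacle here, since the analytic core is already contained in \eqref{1.12}--\eqref{1.28}; the only delicate point is the \emph{direction} of the final estimate. To contradict $\int_a^{\infty}\frac{dr}{rA_r}=\infty$ I need an \emph{upper} bound on that integral, hence a \emph{lower} bound on $A_r$ over each dyadic annulus, and this is supplied precisely by monotonicity of the mass function $r\mapsto\int_{B(r)}|\omega|^q\,dv$. One could also shortcut the argument by invoking Proposition \ref{P:2.1} (whereby $2$-small growth implies $2$-mild growth for the same $q$) and then Theorem \ref{T:3.2}; but I would prefer the direct dyadic proof, as it keeps this section self-contained and parallels the proofs of Theorems \ref{T:3.3} and \ref{T:3.4}.
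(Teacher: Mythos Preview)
Your proof is correct and follows essentially the same route as the paper: both argue by contradiction along the dyadic sequence $r_{j+1}=2r_j$, combine the telescoping bound \eqref{1.28} with the lower bound $\frac{(r_{j+1}-r_j)^2}{r_{j+1}^2A_{j+1}-r_j^2A_j}\ge\frac{1}{4A_{j+1}}$, and then control $\int\frac{dr}{rA_r}$ by the resulting series. The only cosmetic difference is that the paper quotes \eqref{1.30} (established in the proof of Theorem \ref{T:3.4}, via the Mean Value Theorem for integrals) to pass from the sum to the integral, whereas you spell out the same comparison directly using the monotonicity of $r\mapsto r^2A_r$.
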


\begin{proof} 
($\Leftarrow$) It is transparent. $\quad $ $(\Rightarrow)$ If  either $d\omega\neq 0\, ,$ or
$d^{*}\omega\neq 0$, then in view of \r{1.30} and \r{1.28} for a strictly
increasing sequence $\{r_j\}$ with $r_{j+1}=2r_j\, ,$ and for any
$\psi(r)>0\, ,$ we would have
 \begin{equation}\label{1.31}
\int_{r_{l_0+1}}^{r_{l+2}} \frac{1}{r A_r}\, dr \le  4 \sum_{j=l_0}^{l}\frac{(r_{j+1}-r_j)^2}{r^2_{j+1}A_{j+1}-r^2_jA_j} \le \frac{4 C}{Q_a} 
 \end{equation}
 Letting $l\rightarrow\infty$ would lead to
\[\int
_{r_{l_0+1}}^{\infty}\bigg (\frac{r}{\int_{B(x_0;r)}|\omega|^q\, dv}\bigg )\,
dr = \int_{r_{l_0+1}}^{\infty} \frac{1}{r A_r}\, dr < \infty\, ,\]
contradicting the assumption that $\omega$ has $2$-small growth, for $1 < q < 3\, .$
Consequently, $\omega$ is closed and co-closed.
\end{proof}

\section{Duality, Unity and Proof of Theorems \ref{T:1.1} and \ref{T:1.2}}
We begin with   

\begin{proof}[Proof of Theorems \ref{T:1.1} and \ref{T:1.2}]
This follows at once from Theorems \ref{T:3.1}-\ref{T:3.5}, Definition \ref{D:2.3}, and Definition \ref{D:2.1}.
\end{proof}

\begin{proof}[Proof of Corollary \ref{C:1.3}]
$ \, {\rm i)}, {\rm ii)}$ From Proposition \ref{P:2.2}, an $L^2$ and an $L^q, 1 < q(\ne2)< 3 $ form are $2$-balanced for $q=2$ and for $1 < q(\ne2)< 3 $ respectively. Now Theorem \ref{T:1.2} completes the proof.  
$ \, {\rm iii)}$ A $0$-form is a function, hence is co-closed, and satisfies $\operatorname{Condition} \operatorname{W}\, \eqref{1.2}.$ A closed $0$-form is a constant.  Hence, it follows at once from Theorem \ref{T:1.2}.
\end{proof}

\begin{proof}[Proof of Corollary \ref{C:1.4}]
This follows from Propositions \ref{P:2.3} and \ref{P:2.4}, and Theorem \ref{T:1.2}
\end{proof}

\begin{proof}[Proof of Corollary \ref{C:1.1}]
This is an immediately consequence of Corollary \ref{C:1.3}.  
\end{proof}

\begin{proof}[Proof of Corollary \ref{C:1.2}]
This follows at once from Corollary \ref{C:1.4}.  
\end{proof}

\begin{proof}[Proof of Dual Theorem \ref{T:1.3}] 
In view of Proposition \ref{P:2.5} and the fact $|\omega|^2 = |\star \, \omega|^2\, ,$ $\star\, \omega$ also satisfies $\operatorname{Condition} \operatorname{W}\, \eqref{1.2}\, ,$  and has the same $2$-balanced growth, for $q$ in the same range as $\omega\,.$ Hence $\star \, \omega$ satisfies the assumption of Theorem \ref{T:1.2}. Now the desired \eqref{1.7} follows from applying Theorem \ref{T:1.2} to $\star \, \omega\, .$   
\end{proof}

\noindent
\begin{theorem}[Unity Theorem]
Under the same growth assumption on $\omega\, $ $($as in Theorem \ref{T:1.1}, or \ref{T:1.2}, or \ref{T:1.3}$)$, the following six statements are equivalent:

$(\operatorname{i})$ $\langle\omega, \Delta \omega\rangle \ge 0\, ,$
$(\operatorname{ii})$ $\Delta \omega = 0\, ,$
$(\operatorname{iii})$ $d\omega = d^{\star}\omega = 0\, ,$
$(\operatorname{iv})$ $\langle \star\, \omega, \Delta \star\, \omega\rangle \ge 0\, ,$ 
$(\operatorname{v})$ $\Delta \star\, \omega = 0\, ,$ and
$(\operatorname{vi})$ $d\star\, \omega = d^{\star} \star\, \omega = 0\, .$\label{T:3.10}
\end{theorem}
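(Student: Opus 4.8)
The plan is to bootstrap from the two dual halves already established. Theorem~\ref{T:1.2} gives the equivalence of (i), (ii) and (iii) for $\omega$ itself under the stated growth hypothesis. For the starred statements, first observe that $|\star\,\omega|^2 = |\omega|^2$ pointwise, so $\int_{B(x_0;r)}|\star\,\omega|^q\,dv = \int_{B(x_0;r)}|\omega|^q\,dv$ for every $r$; hence $\star\,\omega$ has exactly the same growth type (finite/mild/obtuse/moderate/small, and in particular $2$-balanced) as $\omega$ for the same value of $q$. Moreover, by Proposition~\ref{P:2.5}, $\star\,\omega$ satisfies Condition~$\operatorname{W}$~\eqref{1.2} whenever $\omega$ does. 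Therefore $\star\,\omega \in A^{n-k}$ meets the hypotheses of Theorem~\ref{T:1.2}, and applying that theorem to $\star\,\omega$ (equivalently, invoking Theorem~\ref{T:1.3}) yields the equivalence of (iv), (v) and (vi).

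It then remains to splice the two triples together, for which it suffices to prove (iii) $\Leftrightarrow$ (vi). This is a purely algebraic consequence of the Hodge-star identities $d^{*} = (-1)^{nk+n+1}\star d\,\star$ and $\star\,\star = (-1)^{nk+k+1}$. Since $\star$ is a fiberwise isometry that is, up to sign, its own inverse, $d\omega = 0$ if and only if $\star\,d\omega = 0$; writing $\omega = \pm\,\star\,\star\,\omega$ and using the definition of $d^{*}$ applied to the $(n-k)$-form $\star\,\omega$, one gets $\star\,d\omega = \pm\,d^{*}(\star\,\omega)$, whence $d\omega = 0 \Leftrightarrow d^{*}(\star\,\omega) = 0$. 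Dually, $d^{*}\omega = 0 \Leftrightarrow \star\,d^{*}\omega = 0$, and the same identity gives $\star\,d^{*}\omega = \pm\,d(\star\,\omega)$, so $d^{*}\omega = 0 \Leftrightarrow d(\star\,\omega) = 0$. Combining, $d\omega = d^{*}\omega = 0$ if and only if $d\,\star\,\omega = d^{*}\,\star\,\omega = 0$, i.e. (iii) $\Leftrightarrow$ (vi).

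Putting the pieces in a cycle: (i) $\Leftrightarrow$ (ii) $\Leftrightarrow$ (iii) by Theorem~\ref{T:1.2}; (iii) $\Leftrightarrow$ (vi) by the Hodge-star computation above; and (vi) $\Leftrightarrow$ (v) $\Leftrightarrow$ (iv) by Theorem~\ref{T:1.3} (that is, Theorem~\ref{T:1.2} applied to $\star\,\omega$). Hence all six statements are equivalent. The only point needing care is the bookkeeping of the signs $(-1)^{nk+n+1}$, $(-1)^{nk+k+1}$ and the degree shift incurred when $d^{*}$ acts on $\star\,\omega$ rather than on $\omega$; but since these identities are used only to decide whether a form vanishes, the signs are immaterial and no genuine obstacle arises. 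The substantive content is entirely contained in Theorems~\ref{T:1.2} and~\ref{T:1.3}, and the Unity Theorem is their formal synthesis via the self-duality of Condition~$\operatorname{W}$~\eqref{1.2} and the growth-invariance of the map $\omega \mapsto \star\,\omega$.
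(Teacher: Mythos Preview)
Your proof is correct and follows essentially the same approach as the paper: both obtain (i)$\Leftrightarrow$(ii)$\Leftrightarrow$(iii) from Theorem~\ref{T:1.2} and (iv)$\Leftrightarrow$(v)$\Leftrightarrow$(vi) from Theorem~\ref{T:1.3}, then link the two triples via a Hodge-star identity. The only cosmetic difference is that the paper bridges via (ii)$\Leftrightarrow$(v) (using $\star\,\Delta = \Delta\,\star$), whereas you bridge via (iii)$\Leftrightarrow$(vi); both are immediate consequences of the same identities and your explicit verification that $\star\,\omega$ inherits the growth hypothesis and Condition~$\operatorname{W}$ simply reproduces the content of the proof of Theorem~\ref{T:1.3}.
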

\begin{proof}From Theorem \ref{T:1.2}, \noindent $($i$)$ $\Leftrightarrow$ $($ii$)$ $\Leftrightarrow$ $($iii$)\, .$
By Duality Theorem \ref{T:1.3}, \noindent $($iv$)$ $\Leftrightarrow$ $($v$)$ $\Leftrightarrow$ $($vi$)\, .$
On the other hand, $($ii$)$ $\Leftrightarrow$ $($v$)\, .$
Consequently, $($i$)$ through $($vi$)$ are all equivalent.\end{proof}

\section{Eigenforms, Solution of $\langle\omega, \Delta \omega\rangle$ $> 0\, ,$ and Vanishing Theorems}

\begin{theorem}[Nonexistence of eigenforms associated with positive eigenvalues]\label{T:4.1}
Under the assumption of Theorem \ref{T:1.2}, there does not exist an eigenform $\omega$ satisfying \eqref{1.5} associated with eigenvalue $\lambda > 0$.
\end{theorem}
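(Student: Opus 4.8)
The plan is to derive a contradiction from the existence of an eigenform $\omega$ with $\Delta \omega = \lambda \omega$, $\lambda > 0$, by showing such an $\omega$ would have to be both closed and co-closed, and then that this forces $\omega \equiv 0$, contradicting the definition of an eigenform. First I would observe that if $\Delta \omega = \lambda \omega$ with $\lambda > 0$, then
\[
\langle \omega, \Delta \omega \rangle = \lambda |\omega|^2 \ge 0
\]
pointwise on $M$, so $\omega$ is a solution of the differential inequality \eqref{1.4}. Since $\omega$ is assumed to have the same $2$-balanced growth as in Theorem \ref{T:1.2} (for $q = 2$, or for $1 < q(\ne 2) < 3$ with Condition $\operatorname{W}$ \eqref{1.2}), Theorem \ref{T:1.2} applies and yields $d\omega = d^{\star}\omega = 0$, hence $\Delta \omega = 0$ as well.

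Next I would combine $\Delta \omega = 0$ with the eigenform equation: $0 = \Delta \omega = \lambda \omega$, and since $\lambda > 0$ we conclude $\omega \equiv 0$ on $M$. This contradicts the convention (implicit in Definition / equation \eqref{1.5}) that an eigenform is a nontrivial solution — the zero form is associated with every $\lambda$ and is excluded. Therefore no eigenform associated with a positive eigenvalue can exist.

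The argument is essentially immediate once Theorem \ref{T:1.2} is invoked, so there is no serious obstacle; the only point requiring a word of care is the growth hypothesis. The statement says "under the assumption of Theorem \ref{T:1.2}," which is precisely what licenses the application, so the pointwise inequality $\lambda |\omega|^2 \ge 0$ is all the additional input needed. One could equivalently phrase the conclusion via Theorem \ref{T:4.2} (nonexistence of solutions of $\langle \omega, \Delta\omega\rangle > 0$) when $\lambda |\omega|^2 > 0$ somewhere, but the cleanest route is the direct one through Theorem \ref{T:1.2}, which handles the borderline possibility $\omega \equiv 0$ on an open set without extra fuss. I would write the proof in two or three sentences along these lines.
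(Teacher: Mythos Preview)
Your proposal is correct and follows essentially the same route as the paper: both argue that $\langle\omega,\Delta\omega\rangle=\lambda|\omega|^2\ge 0$, invoke Theorem~\ref{T:1.2} to obtain $d\omega=d^{\star}\omega=0$, and then reach a contradiction (the paper via $\langle\omega,\Delta\omega\rangle=0$ against the assumed strict positivity, you via $\lambda\omega=\Delta\omega=0$ forcing $\omega\equiv 0$). These are cosmetic variants of the same two-line argument.
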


\begin{proof}
Suppose contrary, there would exist an eigenform $\omega$ satisfying 
$$\langle\omega, \Delta \omega\rangle = \lambda |\omega|^2  > 0\, .$$ This would imply by Theorem \ref{T:1.2}, $d\omega = d^{*}\omega = 0\, ,$ and hence 
$ \langle\omega, \Delta \omega\rangle = 0\, ,$ 
a contradiction.
\end{proof}

\begin{theorem}[Nonexistence of solution of $\langle\omega, \Delta \omega\rangle$ $> 0$] \label{T:4.2}
Under the assumption of Theorem \ref{T:1.2}, there does not exist a solution of $ \langle\omega, \Delta \omega\rangle > 0\, .$
\end{theorem}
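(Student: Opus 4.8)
The plan is to obtain Theorem \ref{T:4.2} as a direct consequence of Theorem \ref{T:1.2}, exactly in the spirit of the proof of Theorem \ref{T:4.1}. Suppose, for contradiction, that there exists $\omega$ satisfying the standing growth hypothesis \eqref{1.3} together with the strict pointwise inequality $\langle\omega,\Delta\omega\rangle > 0$ on $M$. The first step is the elementary logical observation that such an $\omega$ is \emph{a fortiori} a solution of the non-strict inequality \eqref{1.4}, i.e. $\langle\omega,\Delta\omega\rangle \ge 0$ on $M$, so all the hypotheses of Theorem \ref{T:1.2} are in force.

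The second step is to invoke Theorem \ref{T:1.2} (equivalently, Theorems \ref{T:3.1}--\ref{T:3.5} run through the five $2$-balanced growth regimes): it gives $d\omega = d^{*}\omega = 0$ on $M$, and hence $\Delta\omega = -(dd^{*}+d^{*}d)\omega = 0$ on $M$. Contracting with $\omega$ then yields $\langle\omega,\Delta\omega\rangle \equiv 0$ on $M$, which contradicts the assumption $\langle\omega,\Delta\omega\rangle > 0$. Therefore no such $\omega$ exists, which is the assertion of Theorem \ref{T:4.2}.

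I do not expect any real obstacle: the entire analytic weight of the statement has already been absorbed into Theorem \ref{T:1.2} and its underlying cut-off-function arguments. The only point worth stating explicitly is the passage from the strict to the non-strict inequality in Step~1, which is what makes the dichotomy of Theorem \ref{T:1.2} applicable; once $\Delta\omega = 0$ is established, the strict inequality is manifestly untenable. (Alternatively, one could note that Theorem \ref{T:4.2} contains Theorem \ref{T:4.1} as the special case of an eigenform, since $\Delta\omega = \lambda\omega$ with $\lambda > 0$ forces $\langle\omega,\Delta\omega\rangle = \lambda|\omega|^2 > 0$ wherever $\omega \ne 0$; but the cleanest exposition is the direct reduction to Theorem \ref{T:1.2} described above.)
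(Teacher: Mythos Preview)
Your proposal is correct and follows essentially the same approach as the paper: assume a solution with strict inequality exists, apply Theorem \ref{T:1.2} (since the strict inequality implies the non-strict one) to obtain $d\omega = d^{*}\omega = 0$, and conclude $\langle\omega,\Delta\omega\rangle = 0$, a contradiction. The paper's proof is slightly terser, omitting the explicit intermediate step $\Delta\omega = 0$, but the argument is the same.
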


\begin{proof}
Suppose the contrary, there would exist an $\omega$ satisfying 
$\langle\omega, \Delta \omega\rangle > 0\, ,$ Then from Theorem \ref{T:1.2} , $d\omega = d^{*}\omega = 0\, ,$ and hence 
$ \langle\omega, \Delta \omega\rangle = 0\, ,$ 
a contradiction.
\end{proof}

\begin{theorem}[Vanishing Theorem]\label{T:4.3}
Let
$\omega$ be a solution of \eqref{1.4} satisfying Kato's type inequality \eqref{2.7} on $M\, .$ Suppose $M$ has the volume growth \eqref{1.8}
for every $x_0\in M$ and every $a>0\, .$ If $\omega$ has 2-balanced growth for $1 < q < 3\, ,$ then $\omega \equiv 0\, $
on $M\, .$
\end{theorem}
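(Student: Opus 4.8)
The plan is to reduce everything to the rigidity already proved and then use the two volume growth hypotheses in \eqref{1.8} to eliminate the only surviving possibility. First, since $\omega$ satisfies Kato's type inequality \eqref{2.7}, Proposition~\ref{P:2.4} tells us $\omega$ satisfies $\operatorname{Condition}\,\operatorname{W}$ \eqref{1.2}; as $\omega$ is $2$-balanced for $1<q<3$ and solves $\langle\omega,\Delta\omega\rangle\ge 0$, Corollary~\ref{C:1.4} (equivalently Theorem~\ref{T:1.2}) yields $d\omega=d^{*}\omega=0$ on $M$. Substituting $d\omega=0$ into \eqref{2.7} gives $|d|\omega||\le|d\omega|=0$, so $|\omega|\equiv c$ for a constant $c\ge 0$. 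It remains to rule out $c>0$.

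Suppose $c>0$. Then for every $x_0\in M$ and $r>0$ we have $\int_{B(x_0;r)}|\omega|^q\,dv=c^q\operatorname{Vol}(B(x_0;r))$ and $\int_{\partial B(x_0;r)}|\omega|^q\,ds=c^q\operatorname{Vol}(\partial B(x_0;r))$. By the first half of \eqref{1.8},
\[
\liminf_{r\to\infty}\frac{1}{r^2}\int_{B(x_0;r)}|\omega|^q\,dv
= c^q\liminf_{r\to\infty}\frac{1}{r^2}\operatorname{Vol}(B(x_0;r))=\infty
\]
for every $x_0$, so $\omega$ is \emph{not} $2$-finite (i.e.\ \eqref{2.1} fails with $p=2$). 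Since $\omega$ is $2$-balanced, Proposition~\ref{P:2.1} forces $\omega$ to be \emph{$2$-obtuse}, i.e.\ there is an $x_0$ so that for all $a>0$
\[
\int_a^\infty\frac{dr}{\int_{\partial B(x_0;r)}|\omega|^q\,ds}=\infty .
\]
But using the identity on $\partial B(x_0;r)$ together with the second half of \eqref{1.8},
\[
\int_a^\infty\frac{dr}{\int_{\partial B(x_0;r)}|\omega|^q\,ds}
=\frac{1}{c^q}\int_a^\infty\big(\operatorname{Vol}(\partial B(x_0;r))\big)^{-1}\,dr<\infty ,
\]
a contradiction. Hence $c=0$ and $\omega\equiv 0$ on $M$.

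The argument is mostly bookkeeping once Theorems~\ref{T:3.1}--\ref{T:3.5} and Proposition~\ref{P:2.1} are in hand; the essential step is the dichotomy ``$2$-balanced $\Rightarrow$ $2$-finite or $2$-obtuse'' of Proposition~\ref{P:2.1}, which is precisely why \emph{two} conditions appear in \eqref{1.8} --- the first kills the $2$-finite alternative and the second kills the $2$-obtuse alternative. The only point requiring a little care is matching the quantifiers: \eqref{1.8} is assumed for every $x_0$ and every $a$, which is exactly what is needed, since the negation of ``$2$-finite'' quantifies over all $x_0$ and the negation of ``$2$-obtuse'' quantifies over all $x_0$ and some $a$, so no choice of base point or threshold can rescue a nonzero constant-norm harmonic form.
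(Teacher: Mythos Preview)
Your proof is correct and follows essentially the same approach as the paper's: use Proposition~\ref{P:2.4} and Corollary~\ref{C:1.4} to get $d\omega=0$, then Kato's inequality to force $|\omega|\equiv c$, and finally the dichotomy of Proposition~\ref{P:2.1} together with the two volume conditions in \eqref{1.8} to rule out $c>0$. Your version is in fact more explicit about the quantifier matching than the paper's own argument.
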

\begin{proof}
By Proposition \ref{P:2.4}, if $\omega$ satisfies Kato's type inequality, then $\omega$ satisfies $\operatorname{Condition} \operatorname{W}\, \eqref{1.2}$.
It follows from Corollary \ref{C:1.4} that $\omega$ is closed and hence by $\eqref{2.7}$, $|\omega|\equiv \operatorname{Const}\, .$ Proposition \ref{P:2.1} implies that if $\omega$ is $2$-balanced, then $\omega$ is either $2$-finite $\big (\eqref{2.1},p=2\big )$ or $2$-obtuse $\big (\eqref{2.3},p=2\big )$ for the same value of $q$.
Suppose $|\omega|$ were a nonzero constant, then via \eqref{2.1} and \eqref{2.3}, $p=2$ there would exist $x_0 \in M\, $ and for every $a > 0\, ,$ $$
\underset{r \to \infty}{\liminf} \frac{1}{r^2}\operatorname{Vol}(B(x_0;r)) < \infty\quad \operatorname{or}\quad 
\int^\infty_a\bigg( \operatorname{Vol}(\partial
B(x_0;r))\bigg)^{-1}dr =
 \infty\, .$$ This contradicts \eqref{1.8}. 
 \end{proof}

\section{On a dichotomy of solutions of $\langle\omega, \Delta \omega\rangle \ge 0$}
Just as a warping function (cf.\cite {CW1,CW2}), or nonnegative subharmonic function has a dichotomy, so does differential forms satisfying $\eqref{1.4}\, :$ 
\begin{theorem}\label{T:5.1} Let $\omega$ be a solution of $\eqref{1.4}$ and satisfy $\operatorname{Condition} \operatorname{W}\, \eqref{1.2}$,
Then either $(i)\, \omega$ is both closed and co-closed, or $(ii)\, \omega$ has $2$-imbalanced growth for $1 < q < 3\, .$
\end{theorem}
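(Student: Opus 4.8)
The plan is to derive Theorem \ref{T:5.1} as an immediate logical dichotomy from Theorem \ref{T:1.2} (equivalently, from the combined Theorems \ref{T:3.1}--\ref{T:3.5}) by taking the contrapositive. First I would recall that Theorem \ref{T:1.2} asserts: if $\omega \in A^k$ satisfies $\operatorname{Condition}\operatorname{W}\,\eqref{1.2}$ and has $2$-balanced growth for some $q$ with $1<q(\ne2)<3$, then $\langle\omega,\Delta\omega\rangle\ge 0$ implies $d\omega=d^{\star}\omega=0$. Since $\omega$ is assumed here to be a solution of $\eqref{1.4}$ satisfying $\operatorname{Condition}\operatorname{W}\,\eqref{1.2}$, the only hypothesis of Theorem \ref{T:1.2} that can fail is the $2$-balanced growth hypothesis for $q$ in the stated range.

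The key steps, in order, are: (1) Suppose statement $(i)$ fails, i.e. $\omega$ is not both closed and co-closed, so $d\omega\ne 0$ or $d^{\star}\omega\ne 0$ on $M$. (2) Fix any $q$ with $1<q<3$. If $\omega$ had $2$-balanced growth for this value of $q$, then since $\omega$ satisfies $\eqref{1.4}$ and $\operatorname{Condition}\operatorname{W}\,\eqref{1.2}$, Theorem \ref{T:1.2} (or, for the value $q=2$, the $q=2$ case of Theorems \ref{T:3.1}--\ref{T:3.5}, which needs no Condition W) would force $d\omega=d^{\star}\omega=0$, contradicting (1). (3) Hence $\omega$ has $2$-imbalanced growth for every $q$ with $1<q<3$, which is precisely statement $(ii)$. (4) Conclude that at least one of $(i)$, $(ii)$ holds, i.e. the dichotomy.

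I do not anticipate a genuine obstacle here: the theorem is a packaging of the earlier hard analytic work (the cut-off argument in the proof of Theorem \ref{T:3.1} and its propagation through Theorems \ref{T:3.2}--\ref{T:3.5}) into a clean dichotomy statement, much as the nonnegative-subharmonic-function and warping-function dichotomies of \cite{CW1,CW2} are packaged. The one point requiring a word of care is the quantifier on $q$: ``$2$-imbalanced growth for $1<q<3$'' should be read as holding for each such $q$, and the argument indeed rules out $2$-balanced growth for each fixed $q\in(1,3)$ separately (invoking Theorem \ref{T:1.2} when $q\ne 2$ and the $q=2$ case otherwise), so no uniformity across $q$ is needed. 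Thus the proof is simply: assume $(i)$ fails, apply Theorem \ref{T:1.2} to see that $2$-balanced growth for any $q\in(1,3)$ is impossible, and read off $(ii)$.

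\begin{proof}
Suppose $(i)$ fails, so that $d\omega\ne 0$ or $d^{\star}\omega\ne 0$ on $M$. Let $q$ be any number with $1<q<3$. If $\omega$ were $2$-balanced for this value of $q$, then since $\omega$ solves $\eqref{1.4}$ and satisfies $\operatorname{Condition}\operatorname{W}\,\eqref{1.2}$, the hypotheses $\eqref{1.3}$ of Theorem \ref{T:1.2} would be met (for $q\ne 2$ Condition $\operatorname{W}$ supplies what is needed, and for $q=2$ it is automatic), whence $\eqref{1.6}$ gives $d\omega=d^{\star}\omega=0$, contradicting our assumption. Therefore $\omega$ is $2$-imbalanced for every $q$ with $1<q<3$, which is statement $(ii)$. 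Hence $(i)$ or $(ii)$ holds.
\end{proof}
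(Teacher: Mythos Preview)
Your proposal is correct and follows essentially the same route as the paper: both argue by contradiction, assuming $(i)$ fails and that $\omega$ is $2$-balanced for some $q\in(1,3)$, then invoking Theorem \ref{T:1.2} to force $d\omega=d^{\star}\omega=0$. Your version is slightly more explicit about the quantifier on $q$ and the $q=2$ case, but the argument is the same.
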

\begin{proof}
Suppose contrary, $(\sim i)\, $ $\omega$ were not both closed and co-closed, and  $(\sim ii)\, $ $\omega$ had $2$-balanced growth for $1 < q < 3\, .$ then $(\sim ii)$, via Theorem \ref{1.2} and the hypothesis, $\omega$ satisfies $\operatorname{Condition} \operatorname{W}\, \eqref{1.2}$ would imply $\omega$ were both closed and co-closed. This contradicts $(\sim i)$. 
\end{proof}
By a similar argument, we can prove 
\begin{theorem}\label{T:5.2} Let $\omega$ be a solution of $\eqref{1.4}$ and have $2$-balanced growth for $1 < q < 3\, .$ Then either $(i)\, \omega$ is both closed and co-closed, or $(ii)\, \omega$ does not satisfy $\operatorname{Condition} \operatorname{W}\, \eqref{1.2}$,
\end{theorem}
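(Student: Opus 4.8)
The plan is to argue by contradiction, exactly paralleling the proof of Theorem \ref{T:5.1} but interchanging the roles played by $\operatorname{Condition} \operatorname{W}\, \eqref{1.2}$ and by the property of being closed and co-closed in the two alternatives of the conclusion. So I would suppose that both alternatives fail: $(\sim i)$ $\omega$ is not both closed and co-closed, and $(\sim ii)$ $\omega$ does satisfy $\operatorname{Condition} \operatorname{W}\, \eqref{1.2}$.

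First I would gather the hypotheses now in force on $\omega$: by assumption $\omega$ is a solution of $\eqref{1.4}$ and has $2$-balanced growth for $1 < q < 3$, and by $(\sim ii)$ it satisfies $\operatorname{Condition} \operatorname{W}\, \eqref{1.2}$. These are precisely the hypotheses of Theorem \ref{T:1.2}: if the chosen value is $q = 2$, then the growth assumption $\eqref{1.3}$ holds with no extra condition needed; if $1 < q(\ne 2) < 3$, then $\eqref{1.3}$ holds because $\operatorname{Condition} \operatorname{W}\, \eqref{1.2}$ is available from $(\sim ii)$. In either case Theorem \ref{T:1.2} applies, and it yields $d\omega = d^{\star}\omega = 0$; that is, $\omega$ is both closed and co-closed. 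This contradicts $(\sim i)$, which completes the proof.

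Since the argument is a direct logical rearrangement of Theorem \ref{T:5.1}, I do not expect a genuine obstacle. The only point deserving a word of care is that ``$2$-balanced growth for $1 < q < 3$'' already subsumes the value $q = 2$, for which $\operatorname{Condition} \operatorname{W}\, \eqref{1.2}$ is not required; but its presence under $(\sim ii)$ does no harm, and Theorem \ref{T:1.2} still delivers the vanishing of $d\omega$ and $d^{\star}\omega$. Hence the dichotomy between $(i)$ and $(ii)$ follows.
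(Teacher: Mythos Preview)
Your proposal is correct and is exactly the ``similar argument'' the paper has in mind: a contradiction argument in which the negations of $(i)$ and $(ii)$ together with the standing hypotheses feed directly into Theorem~\ref{T:1.2} to force $d\omega=d^{\star}\omega=0$, contradicting $(\sim i)$. There is nothing to add; this matches the paper's intended proof.
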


The following Theorem 
is equivalent to Theorem \ref{T:5.1} and yield information on a nontrivial solution of $\eqref{1.4}$ 

\begin{theorem}\label{T:5.3} Let $\omega$ be either a non-closed or non-co-closed solution of $\eqref{1.4}$ and satisfy $\operatorname{Condition} \operatorname{W}\, \eqref{1.2}$,
Then $\omega$ has $2$-imbalanced growth for $1 < q < 3\, .$
\end{theorem}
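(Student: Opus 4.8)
The plan is to deduce Theorem~\ref{T:5.3} directly from the dichotomy in Theorem~\ref{T:5.1} by a contrapositive-style argument, since the two statements are logically equivalent. First I would recall the hypotheses: $\omega$ is a solution of $\eqref{1.4}$, i.e. $\langle\omega,\Delta\omega\rangle\ge 0$ on $M$, and $\omega$ satisfies $\operatorname{Condition}\operatorname{W}\,\eqref{1.2}$; moreover $\omega$ is assumed to be \emph{not} both closed and co-closed (that is, $d\omega\ne 0$ or $d^{*}\omega\ne 0$). The goal is to show $\omega$ has $2$-imbalanced growth for $1<q<3$.

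The key step is to invoke Theorem~\ref{T:5.1}: under exactly the present hypotheses on $\omega$ (solution of $\eqref{1.4}$ satisfying $\operatorname{Condition}\operatorname{W}\,\eqref{1.2}$), one has the dichotomy that either $(i)$ $\omega$ is both closed and co-closed, or $(ii)$ $\omega$ has $2$-imbalanced growth for $1<q<3$. Since by assumption alternative $(i)$ fails, alternative $(ii)$ must hold, which is precisely the conclusion. Equivalently, one can argue directly by contradiction in the style of the proof of Theorem~\ref{T:5.1}: if $\omega$ had $2$-balanced growth for $1<q<3$, then since $\omega$ solves $\langle\omega,\Delta\omega\rangle\ge 0$ and satisfies $\operatorname{Condition}\operatorname{W}\,\eqref{1.2}$, Theorem~\ref{T:1.2} (via the unified growth hypotheses $\eqref{1.3}$) would force $d\omega=d^{*}\omega=0$, contradicting the assumption that $\omega$ is non-closed or non-co-closed.

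I expect there to be essentially no obstacle here: the real content has already been extracted in Theorems~\ref{T:3.1}--\ref{T:3.5} and packaged into Theorem~\ref{T:1.2} and Theorem~\ref{T:5.1}, so this statement is a formal restatement (the "equivalent form" advertised in the sentence preceding it). The only thing to be careful about is bookkeeping of the logical negations — making sure that "non-closed or non-co-closed" is correctly the negation of "both closed and co-closed," and that the range $1<q<3$ on the balanced-growth side matches the range in $\eqref{1.3}$ (the case $q=2$ together with $1<q(\ne 2)<3$ under $\operatorname{Condition}\operatorname{W}$, which is subsumed once $\operatorname{Condition}\operatorname{W}\,\eqref{1.2}$ is assumed throughout). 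So the write-up would be a two- or three-line proof: state the contrapositive, apply Theorem~\ref{T:1.2}, and conclude.

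\begin{proof}
Suppose, to the contrary, that $\omega$ has $2$-balanced growth for $1<q<3$. Since $\omega$ is a solution of $\eqref{1.4}$ and, by hypothesis, satisfies $\operatorname{Condition}\operatorname{W}\,\eqref{1.2}$, the growth assumption $\eqref{1.3}$ is met for every $q$ with $1<q<3$. Hence Theorem~\ref{T:1.2} applies and yields $d\omega=d^{*}\omega=0$ on $M$; that is, $\omega$ is both closed and co-closed. This contradicts the assumption that $\omega$ is either non-closed or non-co-closed. Therefore $\omega$ has $2$-imbalanced growth for $1<q<3$. Equivalently, this is the contrapositive of the dichotomy in Theorem~\ref{T:5.1}: alternative $(i)$ fails by hypothesis, so alternative $(ii)$ must hold.
\end{proof}
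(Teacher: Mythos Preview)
Your proposal is correct and follows exactly the approach the paper intends: the paper does not give a separate proof of Theorem~\ref{T:5.3}, but simply states that it ``is equivalent to Theorem~\ref{T:5.1},'' whose proof is precisely the contradiction argument via Theorem~\ref{T:1.2} that you wrote out. Your bookkeeping on the logical negation and on the range $1<q<3$ under $\operatorname{Condition}\,\operatorname{W}$ is also correct.
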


\section{The growth of solutions of $\langle\omega, \Delta \omega\rangle \ge 0$}

Applying Theorem \ref{T:1.2}, we obtain immediately

\begin{theorem}\label{T:6.1} Let $\omega \in A^k$ be a non-closed or non-co-closed solution of
\eqref{1.4} on $M\, .$
Then $\omega$  has $2$-imbalanced growth for $q=2\, ,$ and if 
 $\omega$ satisfies $\operatorname{Condition} \operatorname{W}\, \eqref{1.2},$ then $\omega$
has $2$-imbalanced growth for $1 < q(\ne2) < 3\, .$ 
\end{theorem}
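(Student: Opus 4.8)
The plan is to derive Theorem~\ref{T:6.1} as a direct contrapositive consequence of the already-established equivalences in Theorem~\ref{T:1.2}. First I would set up the logical skeleton: suppose $\omega \in A^k$ satisfies $\langle\omega,\Delta\omega\rangle \ge 0$ on $M$ and is \emph{not} both closed and co-closed, i.e. either $d\omega \ne 0$ or $d^{*}\omega \ne 0$. The goal is to show $\omega$ is $2$-imbalanced for $q=2$, and under the additional hypothesis $\operatorname{Condition}\operatorname{W}\eqref{1.2}$, also $2$-imbalanced for every $q$ with $1<q(\ne 2)<3$.

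For the case $q=2$: if $\omega$ were $2$-balanced for $q=2$, then by Theorem~\ref{T:1.2} (the $q=2$ alternative in \eqref{1.3}, which imposes no extra condition) $\omega$ would satisfy \eqref{1.6}, hence $d\omega = d^{*}\omega = 0$, contradicting the assumption. Therefore $\omega$ is $2$-imbalanced for $q=2$. For the case $1<q(\ne 2)<3$: here I would invoke the second alternative in \eqref{1.3}. Since $\omega$ is assumed to satisfy $\operatorname{Condition}\operatorname{W}\eqref{1.2}$, if $\omega$ were $2$-balanced for such a $q$, Theorem~\ref{T:1.2} would again force $d\omega = d^{*}\omega = 0$, a contradiction. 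Hence $\omega$ is $2$-imbalanced for each such $q$. That completes the argument.

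The proof is essentially a one-line application of Theorem~\ref{T:1.2} in contrapositive form, so there is no genuine obstacle; the only point requiring a little care is bookkeeping the two separate regimes ($q=2$ versus $1<q(\ne 2)<3$) and making sure $\operatorname{Condition}\operatorname{W}\eqref{1.2}$ is invoked only where it is needed, namely for $q\ne 2$. One should also note that ``$2$-imbalanced for $1<q<3$'' is understood pointwise in $q$, i.e.\ for each admissible value of $q$ separately, consistent with the remark after Definition~\ref{D:2.3} that the growth notions depend on $q$. No new estimates, cut-off functions, or integral inequalities are required beyond those already proved in Section~3.

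\begin{proof}
Suppose $\omega\in A^k$ is a solution of \eqref{1.4} that is either non-closed or non-co-closed, so that $d\omega\neq 0$ or $d^{*}\omega\neq 0$ on $M$.

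If $\omega$ had $2$-balanced growth for $q=2$, then by Theorem \ref{T:1.2} (the first alternative in \eqref{1.3}, which requires no additional hypothesis) $\omega$ would satisfy \eqref{1.6}; in particular $d\omega=d^{*}\omega=0$ on $M$, contradicting the choice of $\omega$. Hence $\omega$ has $2$-imbalanced growth for $q=2$.

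Assume now in addition that $\omega$ satisfies $\operatorname{Condition}\operatorname{W}\eqref{1.2}$, and fix any $q$ with $1<q(\ne2)<3$. If $\omega$ had $2$-balanced growth for this $q$, then, since $\omega$ satisfies $\operatorname{Condition}\operatorname{W}\eqref{1.2}$, the second alternative in \eqref{1.3} applies and Theorem \ref{T:1.2} again yields $d\omega=d^{*}\omega=0$ on $M$, a contradiction. Therefore $\omega$ has $2$-imbalanced growth for every such $q$, i.e. for $1<q(\ne2)<3$.
\end{proof}
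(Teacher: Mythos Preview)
Your proof is correct and follows essentially the same approach as the paper: both argue by contradiction, assuming $2$-balanced growth and invoking Theorem~\ref{T:1.2} to force $d\omega=d^{*}\omega=0$, contradicting the hypothesis. Your write-up is in fact slightly clearer in separating the $q=2$ and $q\ne 2$ cases.
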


\begin{theorem}\label{T:6.2} Let $\omega \in A^k$ be a non-closed or non-co-closed solution of $\eqref{1.4}$. Then  $\omega$ has $2$-imbalanced growth for $q=2\, ,$ and if $\omega$ has  $2$-balanced growth for $1 < q(\ne2) < 3\, ,$ then $\omega$ does not satisfy $\operatorname{Condition}\, \operatorname{W}\, \eqref{1.2}.$ \end{theorem}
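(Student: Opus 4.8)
The plan is to obtain Theorem~\ref{T:6.2} as the straightforward logical contrapositive of Theorem~\ref{T:1.2}, in exact parallel with the proof of Theorem~\ref{T:6.1}. All of the analytic content has already been invested in Theorems~\ref{T:3.1}--\ref{T:3.5} and packaged in Theorem~\ref{T:1.2}; here $\omega$ is assumed to be a solution of \eqref{1.4} that fails to be closed or fails to be co-closed, so the two assertions to be proved are precisely the negations of the hypotheses under which Theorem~\ref{T:1.2} would force $\omega$ to be both closed and co-closed. I would argue by contradiction in each of the two ranges of $q$ appearing in \eqref{1.3}.

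First I would treat the case $q=2$. Suppose, for contradiction, that $\omega$ had $2$-balanced growth for $q=2$. The $q=2$ branch of \eqref{1.3} imposes no further requirement, so Theorem~\ref{T:1.2} applies directly and gives $d\omega = d^{\star}\omega = 0$, contradicting the assumption that $\omega$ is non-closed or non-co-closed. Hence $\omega$ must have $2$-imbalanced growth for $q=2$.

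Next I would treat the range $1 < q(\ne 2) < 3$. Assume $\omega$ has $2$-balanced growth for such a $q$, and suppose, for contradiction, that $\omega$ satisfies $\operatorname{Condition}\, \operatorname{W}\, \eqref{1.2}$. Then the second branch of \eqref{1.3} is satisfied, so Theorem~\ref{T:1.2} again yields $d\omega = d^{\star}\omega = 0$, contradicting the hypothesis on $\omega$. Therefore $\omega$ cannot satisfy $\operatorname{Condition}\, \operatorname{W}\, \eqref{1.2}$, which completes the proof.

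There is no genuine obstacle to overcome: the argument is a pure logical transposition of Theorem~\ref{T:1.2}, and one may equivalently read it off from Theorem~\ref{T:5.2} together with the $q=2$ part of Theorem~\ref{T:6.1}. The only point that requires a little care is the bookkeeping of which branch of \eqref{1.3} is invoked, so that $\operatorname{Condition}\, \operatorname{W}\, \eqref{1.2}$ is not inadvertently assumed in the $q=2$ case, where Remarks~\ref{R:3.2} and \ref{R:3.3} explain why it is unnecessary.
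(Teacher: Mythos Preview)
Your proposal is correct and is essentially identical to the paper's own argument: the paper proves Theorems~\ref{T:6.1} and~\ref{T:6.2} together by assuming the contrary and invoking Theorem~\ref{T:1.2} to conclude $d\omega = d^{*}\omega = 0$, yielding a contradiction. Your separation into the $q=2$ and $1<q(\ne 2)<3$ branches and the explicit bookkeeping about when $\operatorname{Condition}\,\operatorname{W}$ is needed are, if anything, slightly more carefully stated than the paper's joint proof.
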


\begin{proof}[Proof of Theorems \ref{T:6.1} and \ref{T:6.2}]
Suppose contrary, i.e., $\omega$  had $2$-balanced growth for $q=2\, ,$ or if 
 $\omega$ satisfies $\operatorname{Condition} \operatorname{W}\, \eqref{1.2},$ (resp. if $\omega$ has  $2$-balanced growth for $1 < q(\ne2) < 3\, ,$ ) then $\omega$
would have $2$-balanced growth for $1 < q(\ne2) < 3\, $ (resp. $\omega$ would satisfy $\operatorname{Condition}\, \operatorname{W}\, \eqref{1.2}$).  By Theorem \ref{1.2}, this would imply $d \omega = 0$ and $d^{*} \omega = 0$, a contradiction.
\end{proof}

\section{Monotonicity Formulas for $2$-Balanced Solutions of $\langle\omega, \Delta \omega\rangle \ge 0$ }

We recall in \cite {DW, W3} the {\it exterior
differential operator} $d^\nabla :A^k(\xi )\rightarrow
A^{k+1}(\xi )$ relative to the connection $\nabla ^E$ is given by
\begin{equation}\label{7.1}
(d^\nabla \sigma
)(X_1,...,X_{k+1})=\sum_{i=1}^{k+1}(-1)^{i+1}(\nabla _{X_i}\sigma
)(X_1,...,\widehat{X}_i,...,X_{k+1})  
\end{equation}
where the symbols covered by the circumflex $\, \, \widehat{}\, \, $ are omitted.  Relative to the Riemannian structures of $E$ and $TM$, the
{\it codifferential operator} $\delta ^\nabla :A^k(\xi )\rightarrow
A^{k-1}(\xi )$ is characterized as 
\begin{equation}\label{7.2}
\delta ^\nabla=(-1)^{nk+n+1}\, \star\, d^\nabla\,  \star
\end{equation}
Then $\delta ^\nabla$ is the adjoint of $d^\nabla$ via the
formula
\begin{equation}\label{7.3}
\int_M\langle d^\nabla \sigma ,\rho \rangle \, dv_g=\int_M\langle
\sigma ,\delta ^\nabla \rho\rangle \, dv_g
\end{equation}
where $\sigma \in A^{k-1}(\xi ),\rho \in A^p(\xi )$ , one of which
has compact support, and $\, dv_g$ is the volume element associated with the metric $g$ on $TM\, .$ \smallskip

\noindent
And
\begin{equation}\label{7.4}
(\delta ^\nabla \omega )(X_1,...,X_{k-1})=-\sum_
{i=1}^k (\nabla
_{e_i}\omega )(e_i,X_1,...,X_{k-1})  
\end{equation}
where $\omega \in A^k(\xi )$ and $\{e_1, \cdots e_n\}$ is a local orthonormal frame field on $(M,g)\, .$
\begin{definition}
$\omega \in
A^k(\xi )$ $($$k\geq 1$$)$ is said to satisfy a
\emph {conservation law} if for any vector field $X$ on $M\, ,$ 
\begin{equation}\label{7.5}
\aligned
& \langle i_Xd^\nabla \omega, \omega
\rangle + \langle\delta ^\nabla \omega
,i_X\omega \rangle = 0\, .
\endaligned
\end{equation}
where $i_X$ is the interior multiplication by $X$. 
\end{definition}

\begin{theorem}\label{T:7.1} Let $M$ be an $n-$dimensional complete
Riemannian manifold with a pole $ x_0$. Let $\xi :E\rightarrow M$
be a Riemannian vector bundle on $M$ and $ \omega \in A^k(\xi )$.
Assume that the radial curvature $K(r)$ of $M$ satisfies one of the
following five conditions:
\begin{equation}\label{7.6}\begin{aligned}
& (i) -\alpha ^2\leq K(r)\leq -\beta ^2\quad  \operatorname{with}\quad \alpha >0,\,  \beta
>0\quad \operatorname{and}\quad (n-1)\beta -2k\alpha \geq 0;\\
& (ii)\, K(r) = 0 \quad  \operatorname{with}\quad n-2k>0;\\
& (iii) -\frac A{(1+r^2)^{1+\epsilon}}\leq K(r)\leq \frac B{(1+r^2)^{1+\epsilon}}\quad  \operatorname{with}\quad  \epsilon > 0\, , A \ge 0\, , 0 < B < 2\epsilon\quad  \rm{and} \\
& \qquad n - (n-1)\frac B{2\epsilon} -2k e^{\frac {A}{2\epsilon}} > 0;\\
& (iv) - \frac {A(A-1)}{r^2}\leq K(r) \le - \frac {A_1(A_1-1)}{r^2}\quad \operatorname{with}\\
& \qquad  1 \le A_1 \le A \quad  \rm{and}\quad 1+(n-1)A_1-2kA > 0;\\
& (v) -\frac {A}{r^2}\leq K(r)\leq -\frac {A_1}{r^2}\quad \operatorname{with}\\
& \qquad 0 \le A_1 \le A\quad  \rm{and}\quad  1 + (n-1)\frac{1 + \sqrt {1+4A_1}}{2} -k(1 + \sqrt {1+4A}) > 0.
\end{aligned}
\end{equation}
If $\omega $ satisfies a conservation law \eqref{7.5}, then
 the following monotonicity formula holds.\begin{equation}\label{7.7}
\frac 1{\rho _1^\lambda }\int_{B_{\rho _1}(x_0)}\frac{|\omega
|^2}2 \, dv \leq \frac 1{\rho _2^\lambda }\int_{B_{\rho
_2}(x_0)}\frac{|\omega |^2}2 \, dv
\end{equation}
for any $0<\rho _1\leq \rho _2$, where
\begin{equation}
\lambda =\left\{
\begin{array}{cc}
n-2k\frac \alpha \beta &\text{if } K(r)  \text { satisfies $($i$)$}\\
n-2k &\text {if } K(r) \text{ satisfies $($ii$)$}\\
n - (n-1)\frac B{2\epsilon} -2k e^{\frac {A}{2\epsilon}} &\text{if } K(r) \text{
satisfies $($iii$)$}\\
1+(n-1)A_1-2kA & \text{if } K(r) \text{ satisfies $($iv$)$} \\
1 + (n-1)\frac{1 + \sqrt {1+4A_1}}{2} -k(1 + \sqrt {1+4A})& \text{if } K(r) \text{ satisfies $($v$)$}.
\end{array}
\right.  \label {7.8}
\end{equation}
\end{theorem}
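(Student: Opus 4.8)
\noindent
The plan is to combine the conservation law \eqref{7.5} with the divergence theorem for the stress--energy tensor of $\omega$ and with Hessian comparison for the distance function from the pole. Set $I(\rho)=\int_{B_{\rho}(x_0)}\tfrac{|\omega|^2}{2}\,dv$; the goal is the differential inequality $\rho\,I'(\rho)\ge\lambda\,I(\rho)$ for a.e.\ $\rho>0$, from which \eqref{7.7} follows by integrating $\tfrac{d}{d\rho}\log I\ge\tfrac{\lambda}{\rho}$ over $[\rho_1,\rho_2]$ (the inequality being trivial on any interval where $I$ vanishes, and $I$ being absolutely continuous and nondecreasing by the coarea formula).

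First I recall, from \cite{DW,W3}, the stress--energy tensor $S_\omega(X,Y)=\tfrac{|\omega|^2}{2}\langle X,Y\rangle-\langle i_X\omega,i_Y\omega\rangle$, a symmetric $2$-tensor whose divergence satisfies $(\operatorname{div}S_\omega)(X)=-\langle i_Xd^\nabla\omega,\omega\rangle-\langle\delta^\nabla\omega,i_X\omega\rangle$; thus the conservation law \eqref{7.5} says precisely that $\operatorname{div}S_\omega=0$. Since $x_0$ is a pole, $r^2=\operatorname{dist}(x_0,\cdot)^2$ is smooth on $M$, so the radial field $X:=r\,\nabla r=\tfrac12\nabla(r^2)$ is a globally smooth vector field with $\nabla X=dr\otimes dr+r\,\operatorname{Hess} r$. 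Applying the identity $\operatorname{div}(i_XS_\omega)=(\operatorname{div}S_\omega)(X)+\langle\nabla X,S_\omega\rangle=\langle\nabla X,S_\omega\rangle$ together with Stokes' theorem on the compact domain $B_{\rho}(x_0)$, whose outward unit normal is $\nabla r$ and on whose boundary $X=\rho\,\nabla r$, gives
\[
\int_{\partial B_{\rho}(x_0)}\rho\Bigl(\tfrac{|\omega|^2}{2}-|i_{\nabla r}\omega|^2\Bigr)\,dA=\int_{B_{\rho}(x_0)}\langle\nabla X,S_\omega\rangle\,dv .
\]
The left-hand side is at most $\rho\int_{\partial B_{\rho}(x_0)}\tfrac{|\omega|^2}{2}\,dA=\rho\,I'(\rho)$, the last equality by the coarea formula.

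The crux is the pointwise estimate $\langle\nabla X,S_\omega\rangle\ge\lambda\,\tfrac{|\omega|^2}{2}$ with $\lambda$ as in \eqref{7.8}. By symmetry of $S_\omega$ one has $\langle\nabla X,S_\omega\rangle=S_\omega(\nabla r,\nabla r)+r\,\langle\operatorname{Hess} r,S_\omega\rangle$; choosing an orthonormal frame $e_1=\nabla r,e_2,\dots,e_n$ diagonalizing $\operatorname{Hess} r$ with eigenvalues $0,\mu_2,\dots,\mu_n$ and using $\sum_{i=1}^n|i_{e_i}\omega|^2=k|\omega|^2$, this equals
\[
\tfrac{|\omega|^2}{2}\bigl(1+r\,\Delta r\bigr)-|i_{\nabla r}\omega|^2-r\sum_{a=2}^n\mu_a\,|i_{e_a}\omega|^2 .
\]
Now I insert the Laplacian/Hessian comparison appropriate to each curvature hypothesis in \eqref{7.6}: under $(\mathrm{i})$, $\beta\coth(\beta r)\le\mu_a\le\alpha\coth(\alpha r)\le\tfrac{\alpha}{\beta}\beta\coth(\beta r)$ and $\Delta r\ge(n-1)\beta\coth(\beta r)$; under $(\mathrm{ii})$, $\mu_a=1/r$ and $\Delta r=(n-1)/r$; and under $(\mathrm{iii})$--$(\mathrm{v})$ the analogous two-sided bounds coming from the respective model comparisons. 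In each case, after substituting $\sum_{a\ge2}|i_{e_a}\omega|^2=k|\omega|^2-|i_{\nabla r}\omega|^2$, the $|i_{\nabla r}\omega|^2$-terms recombine with a coefficient $\ge0$ (since the comparison bounds give $r\mu_a\ge1$) and may be discarded; what remains, using the sign condition in \eqref{7.6} (for $(\mathrm{i})$: $(n-1)\beta-2k\alpha\ge0$) together with $r\,\beta\coth(\beta r)\ge1$ and its analogues, collapses to exactly $\langle\nabla X,S_\omega\rangle\ge\lambda\,\tfrac{|\omega|^2}{2}$ with the $\lambda$ listed in \eqref{7.8}; in case $(\mathrm{ii})$ this is in fact the identity $\langle\nabla X,S_\omega\rangle=(n-2k)\tfrac{|\omega|^2}{2}$.

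Combining the two displays yields $\rho\,I'(\rho)\ge\lambda\,I(\rho)$ for a.e.\ $\rho$, and integrating from $\rho_1$ to $\rho_2$ gives \eqref{7.7}. I expect the main obstacle to be the case-by-case verification in the previous paragraph: one must track the $|i_{\nabla r}\omega|^2$ terms carefully to see that they carry a favorable sign, and feed in precisely the correct comparison estimate for each of the five radial-curvature regimes so that the constant that emerges matches the $\lambda$ in \eqref{7.8} exactly; the five sign conditions in \eqref{7.6} are just what make each $\lambda$ nonnegative and the monotonicity meaningful.
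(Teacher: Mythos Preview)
Your argument is correct and is precisely the stress--energy/Hessian--comparison computation underlying the results in \cite{DW} and \cite{W3} that the paper simply cites (with $F(t)=t$, $d_F=1$); you have unpacked the citation rather than taken a different route. One cosmetic point: rather than passing through $\tfrac{d}{d\rho}\log I$, it is cleaner to observe directly that $\tfrac{d}{d\rho}\bigl(\rho^{-\lambda}I(\rho)\bigr)=\rho^{-\lambda-1}\bigl(\rho I'(\rho)-\lambda I(\rho)\bigr)\ge 0$, which sidesteps the case distinction on where $I$ vanishes.
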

\begin{proof}
This follows from \cite[p.343, Theorem 4.1]{DW} and \cite[p.203, Theorem 6.4]{W3}, in which $F(\frac {|\omega|^2}{2})=\frac {|\omega|^2}{2}$ and the $F$-degree $
d_F=\sup_{t\geq 0}\frac{tF^{\prime }(t)}{F(t)}=1
\, .$ 
\end{proof}

\begin{theorem} \label{T:7.2} Suppose the radial curvature $K(r)$ of $M$ satisfies \eqref{7.6}.
If $\omega \in A^k$ is a $2$-balanced solution of $\langle\omega, \Delta \omega\rangle \ge 0$ or a harmonic $k$-form, for $q=2\, ,$ or for $1 < q(\ne2) < 3$ with $\omega$ satisfying $\operatorname{Condition} \operatorname{W}$ \eqref{1.2} $\big (\operatorname{cf.} \eqref{1.3}\big )$, then monotonicity formulas \eqref{7.7} holds
for any $0<\rho _1\leq \rho _2$, where
$\lambda$ is as in \eqref{7.8}.
\end{theorem}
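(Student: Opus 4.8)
The plan is to reduce the statement to Theorem \ref{T:7.1} by showing that, under the stated growth hypotheses, the form $\omega$ is automatically both closed and co-closed, and hence satisfies the conservation law \eqref{7.5}. First I would invoke Theorem \ref{T:1.2} in the case $\langle\omega,\Delta\omega\rangle\ge 0$, or Theorem \ref{T:1.1} in the case $\Delta\omega=0$: since $\omega$ has $2$-balanced growth for $q=2$, or for $1<q(\ne 2)<3$ with $\omega$ satisfying $\operatorname{Condition}\operatorname{W}\eqref{1.2}$, both theorems yield $d\omega = d^{\star}\omega = 0$ on $M$. (Recall that the very hypothesis ``the radial curvature $K(r)$ of $M$ satisfies \eqref{7.6}'' presupposes that $M$ has a pole $x_0$, so the standing assumptions of Theorem \ref{T:7.1} are in force.)

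Next I would pass to the bundle-valued framework of Section 7 by taking $E=M\times\mathbb R$ with its canonical flat metric, so that, as recorded in the introduction, $A^k$ is isometric to $A^k(\xi)$, $d=d^\nabla$ (as in \eqref{7.1}), and $d^{\star}=\delta^\nabla$ (as in \eqref{7.2}). Under this identification the vanishing $d\omega=d^{\star}\omega=0$ becomes $d^\nabla\omega=0$ and $\delta^\nabla\omega=0$. Consequently, for every vector field $X$ on $M$,
\[
\langle i_X d^\nabla\omega,\omega\rangle+\langle\delta^\nabla\omega,i_X\omega\rangle
=\langle i_X 0,\omega\rangle+\langle 0,i_X\omega\rangle=0,
\]
so $\omega$ satisfies the conservation law \eqref{7.5}.

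Finally, since $K(r)$ satisfies one of the five conditions in \eqref{7.6}, Theorem \ref{T:7.1} applies verbatim to this $\omega$ and delivers the monotonicity formula \eqref{7.7} for all $0<\rho_1\le\rho_2$, with $\lambda$ given by \eqref{7.8}. I do not expect a genuine obstacle here: the analytic substance is already carried by Theorems \ref{T:1.1}/\ref{T:1.2} and \ref{T:7.1}, and the only point needing (routine) care is the translation of closedness and co-closedness of the scalar-valued form $\omega$ into the conservation law for the corresponding trivial-bundle-valued form, together with checking that the two listed cases (the differential inequality $\langle\omega,\Delta\omega\rangle\ge 0$ and harmonicity $\Delta\omega=0$) are exactly the hypotheses covered, respectively, by Theorem \ref{T:1.2} and Theorem \ref{T:1.1}.
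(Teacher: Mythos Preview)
Your proposal is correct and follows essentially the same approach as the paper: invoke Theorem \ref{T:1.2} to conclude $d\omega=d^{\star}\omega=0$, identify $A^k$ with $A^k(\xi)$ for the trivial bundle so that $d=d^\nabla$ and $d^{\star}=\delta^\nabla$, deduce the conservation law \eqref{7.5}, and then apply Theorem \ref{T:7.1}. The only minor difference is that the paper cites Theorem \ref{T:1.2} alone (which already subsumes the harmonic case), whereas you separately mention Theorem \ref{T:1.1} for $\Delta\omega=0$; this is harmless redundancy.
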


\begin{proof}
As  $A^{k}$ is isometric to $A^{k}(\xi)\, ,$ when $\xi : E= M \times \mathbb R \to M$ is the trivial bundle equipped with the canonical metric, via Theorem \ref{T:1.2},  $\omega$  is both closed $(d ^\nabla \omega = d \omega = 0)$ and co-closed $(\delta ^\nabla \omega = d^{*}\omega = 0)$ on $M\, .$ Thus $\omega$ satisfies a conservation law \eqref{7.5},  
The assertion follows from 
Theorem \ref{T:7.1}. 
\end{proof}

\section{Vanishing Theorems of $2$-moderate Solutions (for $q=2$) of $\langle\omega, \Delta \omega\rangle \ge 0$}

\begin{lemma}\label{L:8.1}
Let $\psi(r) > 0$ be a continuous function such that
\begin{equation}
\int_{\rho_0}^\infty \frac{dr}{r\psi(r)}=+\infty\label{8.1}
\end{equation}
for some $\rho_0>0\, .$ Then

$($i$)$ $\lim_{r \to \infty} \frac {\psi(r)}{r^\lambda} \ne \infty \, ,$ for any $\lambda > 0\, .$

$($ii$)$ If $\lim_{r \to \infty} \frac {\psi(r)}{r^\lambda}\, $ exists for some $\lambda > 0\, ,$ then
\begin{equation}
\lim_{r \to \infty} \frac {\psi(r)}{r^\lambda} = 0\, ,\label{8.2}
\end{equation}
In particular, if $\psi(r) > 0$ is a continuous, monotone decreasing function, then \r{8.2} holds.
\end{lemma}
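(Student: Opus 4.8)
The plan is to treat Lemma \ref{L:8.1} as an elementary real-analysis statement about the auxiliary growth function $\psi$ appearing in the definition of $2$-moderate growth, and to prove it by a direct integral-comparison argument. The key observation is that the divergence of $\int_{\rho_0}^\infty \frac{dr}{r\psi(r)}$ forces $\psi$ not to grow too fast: if $\psi$ were eventually comparable to or larger than $r^\lambda$ for some $\lambda>0$, then $\frac{1}{r\psi(r)}$ would be eventually dominated by a constant multiple of $\frac{1}{r^{1+\lambda}}$, whose integral over $[\rho_0,\infty)$ converges, contradicting \eqref{8.1}.

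First I would prove part (i) by contradiction. Suppose $\lim_{r\to\infty}\frac{\psi(r)}{r^\lambda}=\infty$ for some $\lambda>0$. Then there exists $\rho_1\ge \rho_0$ such that $\frac{\psi(r)}{r^\lambda}\ge 1$, i.e. $\psi(r)\ge r^\lambda$, for all $r\ge\rho_1$. Consequently $\frac{1}{r\psi(r)}\le\frac{1}{r^{1+\lambda}}$ for $r\ge\rho_1$, and hence
\[
\int_{\rho_1}^\infty\frac{dr}{r\psi(r)}\le\int_{\rho_1}^\infty\frac{dr}{r^{1+\lambda}}=\frac{1}{\lambda\,\rho_1^{\lambda}}<\infty,
\]
which together with the finiteness of $\int_{\rho_0}^{\rho_1}\frac{dr}{r\psi(r)}$ (the integrand is continuous and positive on the compact interval) contradicts \eqref{8.1}. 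This proves (i).

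Next I would prove part (ii). Assume $L:=\lim_{r\to\infty}\frac{\psi(r)}{r^\lambda}$ exists for some $\lambda>0$; by (i) we have $L\ne\infty$, so $L\in[0,\infty)$. Suppose toward a contradiction that $L>0$. Then there is $\rho_1\ge\rho_0$ with $\frac{\psi(r)}{r^\lambda}\ge\frac{L}{2}$, i.e. $\psi(r)\ge\frac{L}{2}r^\lambda$, for all $r\ge\rho_1$, so that $\frac{1}{r\psi(r)}\le\frac{2}{L}\cdot\frac{1}{r^{1+\lambda}}$ and the same convergence estimate as above yields a finite integral, contradicting \eqref{8.1}. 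Hence $L=0$, which is \eqref{8.2}. For the final sentence, if $\psi(r)>0$ is continuous and monotone decreasing, then $\psi$ has a finite nonnegative limit as $r\to\infty$, so for any $\lambda>0$, $\frac{\psi(r)}{r^\lambda}\to 0$ trivially (a bounded function divided by $r^\lambda\to\infty$); in particular \eqref{8.2} holds.

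I do not anticipate a serious obstacle here; the only point requiring a little care is to separate the possibly-badly-behaved but harmless piece $\int_{\rho_0}^{\rho_1}\frac{dr}{r\psi(r)}$ (finite because the integrand is continuous and positive on a compact interval, $r\ge\rho_0>0$) from the tail, so that the contradiction is genuinely with the divergence of the tail integral. One should also note that no monotonicity or regularity of $\psi$ beyond continuity and positivity is needed for parts (i) and (ii); monotonicity is only invoked to guarantee existence of the limit in the ``in particular'' clause.
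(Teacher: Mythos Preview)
Your proof is correct and follows essentially the same integral-comparison argument as the paper: assume the limit is $\infty$ or a positive finite number, deduce $\psi(r)\ge c\,r^\lambda$ eventually, and contradict \eqref{8.1} via the convergent integral $\int r^{-1-\lambda}\,dr$. The only minor difference is in the ``in particular'' clause, where the paper argues that $\psi(\rho)/\rho^\lambda$ is monotone decreasing and then invokes (ii), whereas you observe more directly that a decreasing positive $\psi$ is bounded, so $\psi(r)/r^\lambda\to 0$ without even appealing to \eqref{8.1}; both are valid.
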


\begin{proof} Suppose on the contrary, i.e. $\lim_{r \to \infty} \frac {\psi(r)}{r^\lambda} = c < \infty,\, \text{where}\quad  c \ne 0\, $ (resp. $\quad \lim_{r \to \infty} \frac {\psi(r)}{r^\lambda} = \infty\, $ ). Then there would exist $\rho_1 > 0$ such that if $r \ge \rho_1,\quad \psi(r) > \frac c2 r^{\lambda}\, (\text {resp.}\quad   \psi(r) > \kappa r^{\lambda}\, , \operatorname{where}\, \kappa > 0\, $ is a constant.$)\, $ This would lead to $$ \int_{\rho_1}^\infty \frac{dr}{r\psi (r)}\le \frac 2c \int_{\rho_1}^\infty \frac{dr}{r^{1+\lambda}}\, \bigg(\operatorname{resp}.\quad   \kappa \int_{\rho_1}^\infty \frac{dr}{r^{1+\lambda}}\, \bigg) < \infty\, ,$$
contradicting \eqref{8.1}, by the continuity of $\psi (r)$ if $\rho_0 < \rho_1\, .$ This proves $($i$)$ and $($ii$)$. Now we prove the last statement.
Since $\psi(r) > 0$ is a continuous, monotone decreasing function, $\frac{\psi(\rho)}{{\rho}^{\lambda}}, \, \lambda > 0$ is monotone decreasing. Hence,  the monotonicity implies that $\lim_{\rho\rightarrow\infty}\frac{\psi(\rho)}{{\rho}^{\lambda}}$ exists for $\lambda > 0\, .$ By $($ii$)$, \eqref{8.2} follows. \end{proof}
\begin{theorem}\label{T:8.1}
Let $\omega$ have $2$-moderate growth \eqref{2.4} for $q=2$ with \begin{equation}\label{8.3}\lim_{\rho\rightarrow\infty}\frac{\psi(\rho)}{{\rho}^{\lambda-2}} < \infty\, \end{equation} for some $ \lambda > 2$ and for some $\psi$ as in \eqref{2.5}. Then  
\begin{equation}\label{8.4}
\begin{aligned} \int_{B_{\rho}(x_0)}\frac{|\omega |^2}2\, dv = o(\rho^\lambda)\quad \text{as}\quad \rho \to \infty
\end{aligned}\end{equation}
In particular, if $\omega$ has $2$-moderate growth \eqref{2.4} for $q=2$ with some continuous, monotone decreasing function $\psi(\rho) > 0$ as in \eqref{2.5}, then \r{8.4} holds
\end{theorem}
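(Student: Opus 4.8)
The plan is to turn the $2$-moderate growth hypothesis into an explicit upper bound for $\int_{B_\rho(x_0)}\tfrac12|\omega|^2\, dv$ in terms of the auxiliary function $\psi$, and then invoke Lemma \ref{L:8.1} to show that, after normalizing by $\rho^\lambda$, this bound tends to $0$.

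First I would unwind Definition \ref{D:2.2} with $p=q=2$: the assumption of $2$-moderate growth \eqref{2.4} supplies a function $\psi\in\mathcal F$ (taken to be the same $\psi$ occurring in \eqref{8.3}) and, by the definition of $\limsup$, a constant $K'>0$ together with a radius $\rho_1>0$ such that
\[
\int_{B_\rho(x_0)}|\omega|^2\, dv\le K'\rho^2\psi(\rho)\qquad\text{for all }\rho>\rho_1.
\]
Dividing by $2\rho^\lambda$ gives
\[
\frac1{\rho^\lambda}\int_{B_\rho(x_0)}\frac{|\omega|^2}{2}\, dv\le\frac{K'}{2}\,\frac{\psi(\rho)}{\rho^{\lambda-2}}\qquad(\rho>\rho_1),
\]
so it suffices to prove that $\psi(\rho)/\rho^{\lambda-2}\to 0$ as $\rho\to\infty$.

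Next I would apply Lemma \ref{L:8.1} with the exponent $\lambda-2$ in place of $\lambda$, which is legitimate since $\lambda>2$: membership $\psi\in\mathcal F$ provides condition \eqref{8.1} (with $\rho_0=a$), while hypothesis \eqref{8.3} asserts that $\lim_{\rho\to\infty}\psi(\rho)/\rho^{\lambda-2}$ exists and is finite, so part (ii) of the lemma forces this limit to be $0$. Substituting back, the right-hand side of the last display tends to $0$; since the integrand is nonnegative, this yields $\lim_{\rho\to\infty}\rho^{-\lambda}\int_{B_\rho(x_0)}\tfrac12|\omega|^2\, dv=0$, which is exactly \eqref{8.4}. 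For the ``in particular'' clause, a positive continuous monotone decreasing $\psi$ is bounded above by $\psi(a)$, hence $\psi(\rho)/\rho^{\lambda-2}\to 0$ for every $\lambda>2$ (equivalently, one may quote the last assertion of Lemma \ref{L:8.1} directly); thus \eqref{8.3} holds for every such $\lambda$ and the first part delivers \eqref{8.4}.

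The argument is short, and the only point that needs attention — the modest ``hard part'' — is the logical reading of \eqref{8.3}: it must be interpreted as ``the limit exists and is finite'' so that Lemma \ref{L:8.1}(ii) applies and pins the limit to $0$; part (i) alone would merely exclude the value $+\infty$ and would not give the $o(\rho^\lambda)$ decay. Everything else reduces to the one-line chain of inequalities above.
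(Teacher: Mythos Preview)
Your proof is correct and follows essentially the same route as the paper: factor $\rho^{-\lambda}\int_{B_\rho}\tfrac12|\omega|^2\,dv$ as $\frac{\psi(\rho)}{\rho^{\lambda-2}}\cdot\frac{1}{\rho^2\psi(\rho)}\int_{B_\rho}\tfrac12|\omega|^2\,dv$, use Lemma~\ref{L:8.1}(ii) (with exponent $\lambda-2>0$) to force the first factor to $0$, and bound the second by the $2$-moderate growth hypothesis. The only cosmetic difference is that you extract an explicit constant $K'$ from the $\limsup$ before multiplying, whereas the paper leaves things phrased as a $\limsup$ product inequality; your reading of \eqref{8.3} as ``the limit exists and is finite'' matches the paper's intended use of Lemma~\ref{L:8.1}(ii).
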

\begin{proof}
In view of Lemma \ref{L:8.1}$($ii$)$, the assumption \eqref{8.3} implies that $\limsup_{\rho\rightarrow\infty}\frac{\psi(\rho)}{{\rho}^{\lambda-2}} = \lim_{\rho\rightarrow\infty}\frac{\psi(\rho)}{{\rho}^{\lambda-2}}= 0\, , \lambda > 2\, .$ It follows that
\begin{equation}
\begin{aligned} \limsup_{\rho\rightarrow\infty}\frac {1}{\rho^\lambda }\int_{B_{\rho}(x_0)}\frac{|\omega |^2}2\, dv &\le \limsup_{\rho\rightarrow\infty}\frac{\psi(\rho)}{{\rho}^{\lambda-2}}\bigg (\frac{1}{{\rho}^2\psi(\rho)}\int_{B(x_0;\rho)}\frac{|\omega |^2}{2}\, dv\bigg )\\
&\le \bigg (\limsup_{\rho\rightarrow\infty}\frac{\psi(\rho)}{{\rho}^{\lambda-2}}\bigg )\bigg (\limsup_{\rho\rightarrow\infty}\frac{1}{{\rho}^2\psi(\rho)}\int_{B(x_0;\rho)}\frac{|\omega |^2}2\, dv\bigg )\\
& = 0.
\end{aligned}\label{8.5}
\end{equation}
On the other hand, we have via \eqref{8.5},
\begin{equation}
\begin{aligned} 0 \le \liminf_{\rho\rightarrow\infty}\frac {1}{\rho^\lambda }\int_{B_{\rho}(x_0)}\frac{|\omega |^2}2\, dv \le \limsup_{\rho\rightarrow\infty}\frac {1}{\rho^\lambda }\int_{B_{\rho}(x_0)}\frac{|\omega |^2}2\, dv \le 0
\end{aligned}\label{8.6}
\end{equation}
This gives the desired \eqref{8.4}.
\end{proof}

\begin{theorem}\label{T:8.2} Suppose the radial curvature $K(r)$ of $M$ satisfies \eqref{7.6}. If $\omega \in A^k(\xi )$ satisfies a
conservation law \eqref{7.5} and
\begin{equation}\tag{9.4}
\int_{B_\rho(x_0)}\frac{|\omega |^2}2\, dv = o(\rho^\lambda )\quad \text{as
} \rho\rightarrow \infty
\end{equation}
where $\lambda $ is given by \eqref{7.8}, then $\omega \equiv 0$ on $M$. In
particular, if $\omega $ is $L^2$ on $M$ and satisfies a conservation law \eqref{7.5}, then
$\omega \equiv 0$.
\end{theorem}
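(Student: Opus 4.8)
The plan is to read off the conclusion directly from the monotonicity formula of Theorem~\ref{T:7.1}. Under the standing hypotheses---$M$ complete with a pole $x_0$, radial curvature satisfying \eqref{7.6}, and $\omega\in A^k(\xi)$ obeying the conservation law \eqref{7.5}---Theorem~\ref{T:7.1} applies verbatim and gives, for all $0<\rho_1\le\rho_2$,
\[
\frac{1}{\rho_1^{\lambda}}\int_{B_{\rho_1}(x_0)}\frac{|\omega|^2}{2}\,dv\ \le\ \frac{1}{\rho_2^{\lambda}}\int_{B_{\rho_2}(x_0)}\frac{|\omega|^2}{2}\,dv,
\]
with $\lambda$ the exponent determined by \eqref{7.8}.

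First I would fix $\rho_1>0$ and let $\rho_2\to\infty$ in this inequality. The growth assumption $\int_{B_\rho(x_0)}\tfrac{|\omega|^2}{2}\,dv=o(\rho^{\lambda})$ says precisely that the right-hand side tends to $0$, whence $\int_{B_{\rho_1}(x_0)}|\omega|^2\,dv\le 0$, i.e. $\omega\equiv 0$ on $B_{\rho_1}(x_0)$. Since $\rho_1$ was arbitrary and $x_0$ is a pole---so that $\exp_{x_0}$ is a diffeomorphism and the geodesic balls $B_{\rho_1}(x_0)$ exhaust $M$ as $\rho_1\to\infty$---it follows that $\omega\equiv 0$ on $M$.

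For the ``in particular'' clause I would observe that in each of the five cases of \eqref{7.6} the exponent $\lambda$ in \eqref{7.8} is strictly positive: this is built into the side conditions in \eqref{7.6} (for instance $(n-1)\beta-2k\alpha\ge 0$ gives $\lambda=n-2k\alpha/\beta\ge 1$, case (ii) imposes $n-2k>0$, and cases (iii)--(v) list the positivity of the relevant $\lambda$ explicitly). Hence if $\omega\in L^2(M)$ then $\int_{B_\rho(x_0)}\tfrac{|\omega|^2}{2}\,dv\le\tfrac12\|\omega\|_{L^2(M)}^2$ is bounded in $\rho$, so it is trivially $o(\rho^{\lambda})$, and the first part yields $\omega\equiv 0$.

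I do not expect a real obstacle here: all of the analytic content is already packaged in Theorem~\ref{T:7.1}, and what remains is the familiar ``monotonicity plus sub-$\lambda$ volume growth forces vanishing'' device. The only points meriting an explicit line are the positivity of $\lambda$ across the five curvature regimes (needed only for the $L^2$ corollary) and the observation that geodesic balls centered at the pole exhaust $M$.
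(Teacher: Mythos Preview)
Your proposal is correct and follows essentially the same approach as the paper: invoke Theorem~\ref{T:7.1} to obtain the monotonicity formula \eqref{7.7}, then let $\rho_2\to\infty$ and use the $o(\rho^\lambda)$ hypothesis to force the energy on each ball to vanish. Your write-up is in fact more detailed than the paper's one-line proof, and your justification of $\lambda>0$ for the $L^2$ corollary (which the paper leaves implicit) is accurate in each of the five cases.
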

\begin{proof} From Theorem \ref{T:7.1},  the monotonicity formula \eqref{7.7} holds for any $0<\rho _1\leq \rho _2\, ,$ and $\lambda $ as in \eqref{7.8}. Combining \eqref{7.7} and \eqref{8.4} completes the proof. 
\end{proof}

\begin{theorem} \label{T:8.3} Let manifold $M$ and the radial curvature $K(r)$ of $M$ be as in \eqref{7.6}. Let $\omega \in A^k$ be a solution of \eqref{1.4} $\langle\omega, \Delta \omega\rangle \ge 0$, or harmonic $k$-form, where
\begin{equation}
k < \left\{
\begin{array}{cc}
\frac {n-2}{2}\frac \beta \alpha   &\text {if } K(r)  \text { satisfies $($i$)$}\\
\frac {n-2}{2} &\text {if } K(r) \text{ satisfies $($ii$)$}\\
 \big ( \frac {n-2}{2} -(n-1)\frac B{4\epsilon}\big ) e^{-\frac {A}{2\epsilon}}  &\text{if } K(r) \text{
satisfies $($iii$)$}\\
\frac {n-1}{2}\frac{A_1}{A} - \frac 1{2A} &\text {if } K_r \text{
satisfies $($iv$)$}\\
\frac {n-1}{2} \frac{1 + \sqrt {1+4A_1}}{1 + \sqrt {1+4A}}-\frac{1}{1 + \sqrt {1+4A}}&\text {if } K_r \text{
satisfies $($v$)$}
\end{array}
\right.  \label {8.8}
\end{equation}
Suppose $\omega$ has $2$-moderate growth \eqref{2.4} for $q=2$ with $\psi$ satisfying \eqref{8.3} for some $\psi$ in \eqref{2.5} and $\lambda $ as in \eqref{7.8}.
Then \eqref{8.4} holds and $\omega \equiv 0$  on $M$. In
particular, every $L^2$ solution of \eqref{1.4} $\langle\omega, \Delta \omega\rangle \ge 0$ vanishes.
\end{theorem}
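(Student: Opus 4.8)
The plan is to chain together results already in the paper: first deduce that $\omega$ is closed and co-closed, then that it satisfies a conservation law, then extract the $o(\rho^\lambda)$ growth estimate, and finally invoke the monotonicity formula to force $\omega\equiv 0$.

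First I would note that $2$-moderate growth for $q=2$ is a special case of $2$-balanced growth for $q=2$ (Proposition \ref{P:2.1} and Definition \ref{D:2.3}), so Theorem \ref{T:1.2}---equivalently Theorem \ref{T:3.4}---applies with no appeal to $\operatorname{Condition}\, \operatorname{W}$ \eqref{1.2}, and yields $d\omega = d^{*}\omega = 0$; a harmonic $k$-form is included since it satisfies $\langle\omega,\Delta\omega\rangle = 0 \ge 0$. Then, exactly as in the proof of Theorem \ref{T:7.2}, identifying $A^k$ with $A^k(\xi)$ for the trivial bundle $\xi: M\times\mathbb R\to M$, closedness and co-closedness translate into $d^\nabla\omega = 0$ and $\delta^\nabla\omega = 0$, so $\omega$ satisfies the conservation law \eqref{7.5}.

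Next I would record the purely arithmetic observation that hypothesis \eqref{8.8} on $k$ is, in each of the five curvature cases, equivalent to $\lambda > 2$ for the exponent $\lambda$ in \eqref{7.8}. For example, in case $($i$)$, $\lambda = n - 2k\frac\alpha\beta > 2$ is precisely $k < \frac{n-2}{2}\frac\beta\alpha$, and the other four cases are the same elementary rearrangement. Given $\lambda > 2$ and the assumption \eqref{8.3} that $\lim_{\rho\to\infty}\psi(\rho)/\rho^{\lambda-2} < \infty$ for the witnessing $\psi\in\mathcal F$, Theorem \ref{T:8.1} delivers the growth estimate \eqref{8.4}, namely $\int_{B_\rho(x_0)}\frac{|\omega|^2}{2}\,dv = o(\rho^\lambda)$ as $\rho\to\infty$.

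Finally, since $\omega$ satisfies the conservation law \eqref{7.5} and the radial curvature obeys \eqref{7.6}, Theorem \ref{T:8.2}---whose proof combines the monotonicity formula \eqref{7.7} of Theorem \ref{T:7.1} with \eqref{8.4}---gives $\omega\equiv 0$ on $M$. For the last sentence, an $L^2$ form has $2$-moderate growth for $q=2$ with $\psi\equiv 1$, which satisfies \eqref{8.3} because $\lambda > 2$, so the general statement applies; alternatively one quotes the ``in particular'' clause of Theorem \ref{T:8.2} directly. I do not expect a genuine obstacle: the only step needing care is the bookkeeping that \eqref{8.8} is exactly the inequality $\lambda > 2$ required to invoke Theorem \ref{T:8.1}, together with verifying that the harmonic-form case and the $L^2$ case really do satisfy the hypotheses of Theorems \ref{T:1.2} and \ref{T:8.1}.
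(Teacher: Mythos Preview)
Your proposal is correct and follows essentially the same approach as the paper's own proof: apply Theorem \ref{T:1.2} (via Theorem \ref{T:3.4}) to get $d\omega=d^{*}\omega=0$, hence the conservation law \eqref{7.5}; observe that \eqref{8.8} is exactly $\lambda>2$; invoke Theorem \ref{T:8.1} to obtain \eqref{8.4}; and conclude via Theorem \ref{T:8.2}. The paper also records a second variant that reaches the monotonicity formula through Theorem \ref{T:7.2} instead of Theorem \ref{T:8.2}, but the logical content is the same as what you wrote.
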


\begin{proof}\,  {\it First Proof:}$\, $ From Theorem \ref{T:1.2}, or Theorem \ref{T:3.4}, $\omega$ is closed and co-closed. Hence, $\omega$ satisfies a conservation law \r{7.5}. If $k$ satisfies \eqref{8.8}, then $\lambda $ in \eqref{7.8} satisfies $\lambda > 2\, .$ 
By Theorem \ref{T:8.1},  \eqref{8.4} holds, where $\lambda$ is given by \r{7.8}.  Applying Theorem \ref{T:8.2}, we conclude $\omega \equiv 0\, $ on $M\, .$
\smallskip

\noindent
{\it Second Proof:}$\, $ By Definition \ref{D:2.3}, $\omega$ has $2$-balanced growth for $q=2\, .$ Applying Theorem \ref{T:7.2}, we have monotonicity formula \eqref{7.7}
for any $0<\rho _1\leq \rho _2$, where
$\lambda$ is as in \eqref{7.8}.
If $k$ satisfies \eqref{8.8}, then $\lambda $ in \eqref{7.8} satisfies $\lambda > 2\, .$ 
By Theorem \ref{T:8.1},  \eqref{8.4} holds.   Combining \eqref{8.4} and \eqref{7.7}, we conclude $\omega \equiv 0\, $ on $M\, .$
\end{proof}
\begin{corollary}\label{C:8.1} Let $M, K(r), \lambda, k$ as in Theorem \ref{T:8.3}. Let $\omega \in A^k$ be a solution of \eqref{1.4} $\langle\omega, \Delta \omega\rangle \ge 0$, or harmonic $k$-form on $M$.
Suppose $\omega$ has $2$-moderate growth \eqref{2.4} for $q=2$ with a continuous, monotone decreasing function $\psi(r) > 0$ as in \eqref{2.5}. Then $\omega \equiv 0$.
\end{corollary}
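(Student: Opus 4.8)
The plan is to derive Corollary \ref{C:8.1} as an essentially immediate consequence of Theorem \ref{T:8.3}, so the work reduces to checking that a continuous, monotone decreasing positive function $\psi$ does satisfy the hypothesis \eqref{8.3} of Theorem \ref{T:8.3}. First I would invoke the last statement of Lemma \ref{L:8.1}: if $\psi(r)>0$ is continuous and monotone decreasing, then $\frac{\psi(\rho)}{\rho^{\mu}}$ is monotone decreasing for every $\mu>0$, hence it has a limit as $\rho\to\infty$, and by part (ii) of Lemma \ref{L:8.1} that limit is $0$. Applying this with $\mu = \lambda - 2 > 0$ (note $\lambda>2$ is guaranteed precisely because $k$ satisfies \eqref{8.8}, as already observed in the proof of Theorem \ref{T:8.3}) gives $\lim_{\rho\to\infty}\frac{\psi(\rho)}{\rho^{\lambda-2}} = 0 < \infty$, which is exactly \eqref{8.3}.

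Once \eqref{8.3} is verified, the second step is simply to quote Theorem \ref{T:8.3}: under the stated curvature assumption \eqref{7.6}, the bound \eqref{8.8} on $k$, and $2$-moderate growth \eqref{2.4} for $q=2$ with $\psi$ satisfying \eqref{8.3}, one concludes $\omega\equiv 0$ on $M$. Thus the corollary is proved by specializing Theorem \ref{T:8.3} to the subclass of auxiliary functions $\psi$ that are monotone decreasing. I would also remark that one could bypass Theorem \ref{T:8.3} and argue directly from Theorem \ref{T:8.1} (whose ``in particular'' clause already handles monotone decreasing $\psi$, yielding \eqref{8.4}) combined with Theorem \ref{T:8.2} and the conservation law coming from Theorem \ref{T:1.2}; this is really the same chain of implications repackaged.

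There is essentially no obstacle here: the only point requiring a moment's care is the logical dependency $k \text{ satisfies } \eqref{8.8} \Rightarrow \lambda > 2$ (so that $\lambda-2$ is a legitimate positive exponent to feed into Lemma \ref{L:8.1}), which is already established inside the proof of Theorem \ref{T:8.3} for each of the five curvature cases in \eqref{7.6}. Everything else is a direct citation. I would therefore keep the proof to two or three sentences, essentially: ``By the last statement of Lemma \ref{L:8.1}, any continuous monotone decreasing $\psi(r)>0$ satisfies $\lim_{\rho\to\infty}\psi(\rho)/\rho^{\lambda-2}=0$ since $\lambda>2$; hence $\psi$ satisfies \eqref{8.3}, and Theorem \ref{T:8.3} applies to give $\omega\equiv 0$.''

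\begin{proof}
Since $k$ satisfies \eqref{8.8}, the number $\lambda$ given by \eqref{7.8} satisfies $\lambda > 2$, exactly as in the proof of Theorem \ref{T:8.3}. Hence $\lambda - 2 > 0$. As $\psi(r) > 0$ is continuous and monotone decreasing, the last statement of Lemma \ref{L:8.1}, applied with exponent $\lambda - 2 > 0$ in place of $\lambda$, yields
\[
\lim_{\rho\rightarrow\infty}\frac{\psi(\rho)}{\rho^{\lambda-2}} = 0 < \infty\, .
\]
Thus $\psi$ satisfies \eqref{8.3}. All the hypotheses of Theorem \ref{T:8.3} are therefore met, and we conclude that \eqref{8.4} holds and $\omega \equiv 0$ on $M$.
\end{proof}
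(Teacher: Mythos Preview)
Your proof is correct and follows essentially the same approach as the paper: verify that a continuous, monotone decreasing $\psi(r)>0$ makes $\psi(\rho)/\rho^{\lambda-2}$ monotone decreasing (hence satisfies \eqref{8.3}), and then apply Theorem \ref{T:8.3}. Your version is in fact slightly more explicit than the paper's, since you spell out why $\lambda-2>0$ and invoke Lemma \ref{L:8.1} directly, whereas the paper simply asserts monotonicity of $\psi(\rho)/\rho^{\lambda-2}$ and immediately cites Theorem \ref{T:8.3}.
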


\begin{proof}
If $\psi(r) > 0$ is a continuous, monotone decreasing function, then $\frac{\psi(\rho)}{{\rho}^{\lambda-2}}\, $ is monotone decreasing. Hence,   $\psi$ satisfies \eqref{8.3} for $\lambda $ as in \eqref{7.8}.
In view of Theorem \ref{T:8.3}, $\omega \equiv 0\, .$
\end{proof}

\section{constant Dirichlet problems for generalized harmonic $1$-forms and harmonic maps}
Let $F: [0, \infty) \to [0, \infty)$ be a strictly increasing $C^2$ function with $F(0)=0$.
\begin{definition}
$\omega \in
A^k(\xi )$ $(k\geq 1)$ is said to satisfy an
\emph {$F-$conservation law} if for any vector field $X$ on $M\, ,$
\begin{equation}\label{9.1}
\aligned
& F^{\prime }(\frac{|\omega |^2}2)\langle i_Xd^\nabla \omega
+d^\nabla
i_X\omega,\omega \rangle-\langle i_{\text{grad}(F^{\prime }(\frac{|\omega |^2}2))}\omega ,i_X\omega \rangle \\
&\qquad +F^{\prime }(\frac{|\omega |^2}2)\langle\delta ^\nabla \omega
,i_X\omega \rangle-F^{\prime }(\frac{|\omega |^2}2)\langle
d^\nabla i_X\omega ,\omega \rangle = 0\, .
\endaligned
\end{equation}
\end{definition}

We recall \emph{$F$-lower degree} $l_F$ is defined to be
\begin{equation}
l_F=\inf_{t\geq 0}\frac{tF^{\prime }(t)}{F(t)}\label{9.2}
\end{equation}
A bounded domain $D\subset M$ with $C^1$ boundary is called
\emph {starlike} ( relative to $x_0\, )$ if there exists an inner point $x_0\in D$ such that
\begin{equation}
\langle\frac \partial {\partial r_{x_0}},\nu\rangle |_{\partial
D}\geq 0 \label{10.2}
\end{equation}
where $\nu$ is the unit outer normal to $\partial
D\, ,$ and for any $x\in D \backslash \{x_0\} \cup \partial D\, ,$ $\frac {\partial} {\partial r_{x_0}}(x)$ is the unit vector field tangent to the unique geodesic emanating from $x_0$ to $x$.
\smallskip

It is obvious that any disc or convex domain is starlike.

\begin{theorem} \label{T:9.1} Let $D$ be a bounded starlike domain $($relative to $x_0 )$ with
$C^1$ boundary in a complete Riemannian $n$-manifold $M$. 
Assume that the radial curvature $K(r)$ of $M$ satisfies one of the
following five conditions:
\begin{equation}\label{9.3}\begin{aligned}
& (i) -\alpha ^2\leq K(r)\leq -\beta ^2\quad  \operatorname{with}\quad \alpha >0,\,  \beta
>0\quad \operatorname{and}\quad (n-1)\beta -2 d_F\alpha \geq 0;\\
& (ii)\, K(r) = 0 \quad  \operatorname{with}\quad n-2 d_F>0;\\
& (iii) -\frac A{(1+r^2)^{1+\epsilon}}\leq K(r)\leq \frac B{(1+r^2)^{1+\epsilon}}\quad  \operatorname{with}\quad  \epsilon > 0\, , A \ge 0\, , 0 < B < 2\epsilon\quad  \operatorname{and} \\
& \qquad n - (n-1)\frac B{2\epsilon} -2 e^{\frac {A}{2\epsilon}} d_F> 0;\\
& (iv) - \frac {A(A-1)}{r^2}\leq K(r) \le - \frac {A_1(A_1-1)}{r^2}\quad \operatorname{with}\\
& \qquad 1 \le A_1 \le A\quad \operatorname{and}\quad  1+(n-1)A_1-2 d_F A > 0;\\
& (v) -\frac {A}{r^2}\leq K(r)\leq -\frac {A_1}{r^2}\, \operatorname{with}\\ 
&\qquad 0 \le A_1 \le A\quad \operatorname{and}\quad  1 + (n-1)\frac{1 + \sqrt {1+4A_1}}{2} -d_F(1 + \sqrt {1+4A}) > 0.
\end{aligned}
\end{equation}Assume that $ l_F\geq \frac 12$. If $\omega \in A^1(\xi
)$ satisfies $F$-conservation law and annihilates any tangent
vector of $\partial D$, then $\omega $ vanishes on $D$.
\end{theorem}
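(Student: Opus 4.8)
The plan is to prove this by a Rellich--Pohozaev argument built on the $F$-stress--energy tensor of $\omega$. Recall from \cite{DW,W3} that to $\omega\in A^1(\xi)$ and $F$ one attaches the symmetric $2$-tensor
\[
S_F(X,Y)=F\!\left(\tfrac{|\omega|^2}{2}\right)\langle X,Y\rangle-F'\!\left(\tfrac{|\omega|^2}{2}\right)\langle i_X\omega,i_Y\omega\rangle ,
\]
and that $\omega$ satisfies the $F$-conservation law \eqref{9.1} if and only if $\operatorname{div}S_F=0$. Write $r=r_{x_0}$, which is smooth on $D\setminus\{x_0\}$ by the starlikeness hypothesis, and take the position vector field $Y=\nabla\!\left(\tfrac{r^2}{2}\right)=r\,\tfrac{\partial}{\partial r_{x_0}}$, so that $\nabla Y=\operatorname{Hess}(r^2/2)=dr\otimes dr+r\operatorname{Hess}(r)$. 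Since $S_F$ is symmetric with $\operatorname{div}S_F=0$, one has $\operatorname{div}\big(i_YS_F\big)=\big\langle S_F,\operatorname{Hess}(r^2/2)\big\rangle$, so (after excising a small geodesic ball about $x_0$ and letting its radius tend to $0$, which contributes nothing since $|Y|=r\to0$) the divergence theorem on $D$ yields the identity
\[
\int_{\partial D}S_F(Y,\nu)\,dS=\int_{D}\big\langle S_F,\operatorname{Hess}(r^2/2)\big\rangle\,dv .
\]

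First I would show the boundary integral is $\le0$. Along $\partial D$ the hypothesis that $\omega$ annihilates tangent vectors gives $i_X\omega=0$ for $X\in T\partial D$, so decomposing $Y=Y^{\top}+\langle Y,\nu\rangle\nu$ we get $i_Y\omega=\langle Y,\nu\rangle\,i_\nu\omega$ and $|i_\nu\omega|^2=|\omega|^2$ there; hence, with $t=\tfrac{|\omega|^2}{2}$,
\[
S_F(Y,\nu)=\langle Y,\nu\rangle\big(F(t)-2t\,F'(t)\big).
\]
Now $l_F\ge\tfrac12$ means $tF'(t)\ge\tfrac12 F(t)$ for all $t\ge0$, so $F(t)-2tF'(t)\le0$, while the starlikeness \eqref{10.2} gives $\langle Y,\nu\rangle=r\,\langle\tfrac{\partial}{\partial r_{x_0}},\nu\rangle\ge0$ on $\partial D$. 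Thus the boundary integrand is $\le0$, whence the left side of the identity above is $\le0$.

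Next I would show there is a constant $c_0>0$, depending only on the curvature data in \eqref{9.3}, with $\big\langle S_F,\operatorname{Hess}(r^2/2)\big\rangle\ge c_0\,F\!\big(\tfrac{|\omega|^2}{2}\big)$ pointwise on $D$. Fix $x\in D\setminus\{x_0\}$ and an orthonormal frame $e_1=\tfrac{\partial}{\partial r_{x_0}},e_2,\dots,e_n$ diagonalizing $\operatorname{Hess}(r)|_{e_1^{\perp}}$ with eigenvalues $\mu_2,\dots,\mu_n>0$; then $\operatorname{Hess}(r^2/2)$ is diagonal with entries $\sigma_1=1,\ \sigma_i=r\mu_i\ (i\ge2)$, so $\langle g,\operatorname{Hess}(r^2/2)\rangle=1+r\Delta r$ and $\big\langle S_F,\operatorname{Hess}(r^2/2)\big\rangle=F(t)\,(1+r\Delta r)-F'(t)\sum_{i}|i_{e_i}\omega|^2\,\sigma_i$. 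The Hessian comparison theorem, applied under each of the five hypotheses of \eqref{9.3}, supplies a lower bound $1+r\Delta r\ge\underline\Lambda(r)$ and an upper bound $\max_i\sigma_i\le\overline\Lambda(r)$ with $\overline\Lambda\ge1$ and, crucially, $\underline\Lambda(r)-2d_F\,\overline\Lambda(r)\ge c_0>0$, where $d_F=\sup_{t\ge0}tF'(t)/F(t)$ and $c_0$ is precisely the positive quantity required in the corresponding line of \eqref{9.3} (and $c_0=1$ in case (i), where $\underline\Lambda,\overline\Lambda$ are not constant). Since $F\ge0$, $F'\ge0$ and $tF'(t)\le d_F F(t)$, and using $\sum_i|i_{e_i}\omega|^2\sigma_i\le\overline\Lambda(r)|\omega|^2=2t\,\overline\Lambda(r)$,
\[
\big\langle S_F,\operatorname{Hess}(r^2/2)\big\rangle\ge F(t)\,\underline\Lambda(r)-2t\,F'(t)\,\overline\Lambda(r)\ge F(t)\big(\underline\Lambda(r)-2d_F\,\overline\Lambda(r)\big)\ge c_0\,F\!\left(\tfrac{|\omega|^2}{2}\right)\ge0 .
\]
Inserting this and the boundary estimate into the Pohozaev identity gives $c_0\int_D F\!\big(\tfrac{|\omega|^2}{2}\big)\,dv\le0$; since $c_0>0$ and $F$ is strictly increasing with $F(0)=0$, this forces $|\omega|\equiv0$ on $D$.

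I expect the main obstacle to be the case-by-case verification of $\underline\Lambda-2d_F\overline\Lambda\ge c_0>0$ for each of the five curvature regimes in \eqref{9.3}. In the model problems this amounts to solving $f''+Kf=0$, $f(0)=0$, $f'(0)=1$ and controlling $rf'/f$: for $K(r)=0$ and for $K(r)=-\tfrac{A(A-1)}{r^2}$ or $-\tfrac{A}{r^2}$ one gets $rf'/f\equiv\mathrm{const}$, so $\underline\Lambda,\overline\Lambda$ are constant and $c_0$ is read off directly (e.g.\ $c_0=n-2d_F$ in case (ii), $c_0=1+(n-1)A_1-2d_FA$ in case (iv)); for $-\alpha^2\le K\le-\beta^2$ one needs the monotonicity of $x\coth x$ to get $r\alpha\coth(\alpha r)\le\tfrac\alpha\beta\,r\beta\coth(\beta r)$, which turns $\underline\Lambda-2d_F\overline\Lambda$ into $1+\big((n-1)-2d_F\tfrac\alpha\beta\big)r\beta\coth(\beta r)\ge1$ by $(n-1)\beta-2d_F\alpha\ge0$; and for the integrable-decay curvature bound one invokes the classical comparison estimates $rf'/f\ge1-\tfrac{B}{2\epsilon}$ and $rf'/f\le e^{A/2\epsilon}$. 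Everything else is uniform across the five cases. Finally I would note that for $F(t)=t$ and $\omega=du$ the $F$-conservation law holds automatically, so the theorem specializes to the rigidity statement that a harmonic map which is constant on $\partial D$ is constant on a starlike $D$, recovering the theorem of Karcher and Wood \cite{KW}.
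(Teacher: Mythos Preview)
Your argument is correct and is precisely the Rellich--Pohozaev method via the $F$-stress--energy tensor that is developed in \cite{DW} (Theorem~6.1) and \cite{W3} (Theorem~10.1), which the paper simply cites; you have supplied the details the paper omits. One small correction: in case~(i) the bound $r\alpha\coth(\alpha r)\le\tfrac{\alpha}{\beta}\,r\beta\coth(\beta r)$ follows from the monotone \emph{decrease} of $\coth$ (so $\coth(\alpha r)\le\coth(\beta r)$ since $\alpha\ge\beta$), not from the monotonicity of $x\mapsto x\coth x$.
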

\begin{proof}
The cases $(i), (ii), (iii)$ follow from the proof of Theorem 10.1 in \cite {W3}, p.207-208. The curvature assumptions are slightly more general than those in Theorem 6.1 in \cite {DW} in which $p$-form are discussed.  The cases $(iv), (v)$ are precisely Theorem 10.1 in \cite{W3}, p.207.
\end{proof}

\begin{theorem} \label{T:9.2}  Let $M$, $D$, and  $\xi$ be as in Theorem \ref{T:9.1}. Assume that the radial curvature $K(r)$ of $M$ satisfies one of the
five conditions in $(\ref{9.3})$. Let $u:\overline{D}\rightarrow N$ be
an $F$-harmonic map with $l_F \ge \frac{1}{2}$ into an arbitrary Riemannian
manifold $N$. If $u|_{\partial D}$ is constant, then $u|_D$ is
constant.
\end{theorem}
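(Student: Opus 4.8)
The plan is to realize the differential $du$ of the map $u$ as a $1$-form with values in a Riemannian vector bundle over $\overline{D}$ and then to invoke Theorem \ref{T:9.1}. First I would take $\xi : E = u^{-1}TN \to \overline{D}$ to be the pull-back bundle, equipped with the pull-back metric and the pull-back $\nabla$ of the Levi-Civita connection of $N$; then $du \in A^1(\xi)$ and $\frac{|du|^2}{2}$ is the energy density $e(u)$ of $u$. Since $\nabla$ is torsion-free, the second fundamental form $\nabla du$ of $u$ is symmetric, hence $d^\nabla(du) = 0$ identically on $D$.

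Next I would check that $\omega := du$ satisfies the $F$-conservation law \eqref{9.1}. Substituting $\omega = du$ into \eqref{9.1}, the terms $F'(e(u))\langle i_X d^\nabla du + d^\nabla i_X du,\, du\rangle$ and $-F'(e(u))\langle d^\nabla i_X du,\, du\rangle$ cancel one another once $d^\nabla(du)=0$ is used, so the left-hand side collapses to $-\langle F'(e(u))\tau(u) + du(\operatorname{grad}(F'(e(u)))),\, i_X du\rangle = -\langle \tau_F(u),\, i_X du\rangle$, where $\tau(u) = \operatorname{trace}\nabla du$ and $\tau_F(u)$ is the $F$-tension field (here I also use $\delta^\nabla du = -\tau(u)$ from \eqref{7.4}); since $u$ is $F$-harmonic we have $\tau_F(u) = 0$, so \eqref{9.1} holds. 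Equivalently, this is the statement that the $F$-stress-energy tensor $S_F = F(e(u))\,g - F'(e(u))\,u^{*}h$ (with $h$ the metric of $N$) is divergence-free for $F$-harmonic maps --- the identification carried out in \cite{DW} and \cite{W3}, which I would cite. For the boundary condition, $u|_{\partial D}$ constant forces $du_x(X) = (u\circ\gamma)'(0) = 0$ for every $x\in\partial D$ and every $X$ tangent to $\partial D$ at $x$ (where $\gamma$ is a curve in $\partial D$ with $\gamma(0)=x$, $\gamma'(0)=X$), so $\omega = du$ annihilates every tangent vector of $\partial D$. Since the radial curvature of $M$ satisfies one of the five conditions in \eqref{9.3} and $l_F \ge \frac{1}{2}$, Theorem \ref{T:9.1} applies to $\omega = du \in A^1(\xi)$ and gives $du \equiv 0$ on $D$. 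A starlike domain is connected, so $u$ is locally constant and therefore constant on $D$, and by continuity $u|_{\overline{D}}$ is constant.

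The analytic substance --- the integration by parts against the $F$-conservation law, the comparison estimates handling each of the five curvature regimes, and the coarea/monotonicity argument forcing $\omega$ to vanish on a starlike domain --- all resides in Theorem \ref{T:9.1} (and, behind it, in \cite{DW} and \cite{W3}); nothing new is needed there. The only step that requires genuine care is the reduction: setting up the pull-back bundle and connection correctly, and verifying that the $F$-harmonic map equation $\tau_F(u)=0$ is exactly the $F$-conservation law \eqref{9.1} for $\omega = du$, i.e. the divergence-free property of $S_F$. This verification is short once the $F$-stress-energy formalism is in hand, but it is the conceptual bridge from the geometric hypothesis to the analytic Theorem \ref{T:9.1}; the boundary condition and the connectedness of $D$ are then immediate.
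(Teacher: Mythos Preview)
Your proposal is correct and follows exactly the paper's own route: set $\omega = du$ in the pull-back bundle, observe that $F$-harmonicity of $u$ gives the $F$-conservation law \eqref{9.1} for $\omega$, that $u|_{\partial D}$ constant makes $\omega$ annihilate tangent vectors of $\partial D$, and then invoke Theorem~\ref{T:9.1}. The paper's proof is a terse three-line version of this; you have simply filled in the bundle setup, the verification that \eqref{9.1} reduces to $\tau_F(u)=0$, and the connectedness step, all of which are legitimate expansions rather than a different strategy.
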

\begin{proof} Take $\omega =du$. Then $\omega|_{\partial D} = 0$. Hence $\omega$ satisfies an $F$-conservation law and annihilates any tangent
vector $\eta $ of $\partial D\, .$ The assertion  
follows at once from Theorem \ref{T:9.1}.
\end{proof}
\begin{corollary}\label{C:9.1} Suppose $M$ and $D$ satisfy the same
assumptions of Theorem \ref{T:9.2}. Let $u:\overline{D}\rightarrow N$ be
a harmonic map $($ resp. $p$-harmonic map, $p\geq 1)$ into an arbitrary Riemannian
manifold $N$. If $u|_{\partial D}$ is constant, then $u|_D$ is
constant.
\end{corollary}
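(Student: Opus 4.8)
The plan is to exhibit both harmonic maps and $p$-harmonic maps ($p\ge 1$) as special instances of $F$-harmonic maps with $F$-lower degree $l_F\ge\frac 12$, and then invoke Theorem \ref{T:9.2}.

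First I would handle the harmonic case. A harmonic map $u:\overline D\to N$ is precisely an $F$-harmonic map with $F(t)=t$. This $F$ is a strictly increasing $C^2$ function on $[0,\infty)$ with $F(0)=0$, $F'\equiv 1$, and hence $l_F=\inf_{t\ge 0}\tfrac{tF'(t)}{F(t)}=1\ge\tfrac 12$ and $d_F=\sup_{t\ge 0}\tfrac{tF'(t)}{F(t)}=1$. With $d_F=1$ the five curvature conditions in \eqref{9.3} are exactly the ones imposed on $M$ in Theorem \ref{T:9.2}, so all hypotheses of that theorem are met; since $u|_{\partial D}$ is constant, taking $\omega=du$ (which annihilates every tangent vector of $\partial D$) the conclusion follows at once.

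Next I would handle the $p$-harmonic case, $p\ge 1$. Here I take $F(t)=\tfrac 1p(2t)^{p/2}$, so that $F\big(\tfrac{|du|^2}{2}\big)=\tfrac 1p|du|^p$ and the $F$-energy is, up to the constant $\tfrac 1p$, the $p$-energy; thus the $F$-harmonic maps for this $F$ are exactly the $p$-harmonic maps. A one-line computation gives $F'(t)=(2t)^{p/2-1}$ and $\tfrac{tF'(t)}{F(t)}=\tfrac p2$ for $t>0$, so $l_F=d_F=\tfrac p2\ge\tfrac 12$ precisely when $p\ge 1$; with $d_F=\tfrac p2$ the curvature conditions \eqref{9.3} again reduce to those of Theorem \ref{T:9.2}. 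Applying Theorem \ref{T:9.2} with this $F$ to $\omega=du$ finishes the proof.

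The only items needing care—rather than a genuine obstacle—are: (a) for $1\le p<2$ the function $F(t)=\tfrac 1p(2t)^{p/2}$ fails to be $C^2$ at the origin, the familiar degeneracy of the $p$-energy, which is handled in the standard way already implicit in Theorem \ref{T:9.1} (e.g.\ restricting to the open set where $du\ne 0$, or via the relevant approximation scheme); and (b) the phrase ``the same assumptions of Theorem \ref{T:9.2}'' is to be read with $d_F$ taken to be the value of the chosen $F$ (namely $d_F=1$, resp.\ $d_F=\tfrac p2$). Granting these bookkeeping points, the corollary is an immediate specialization of Theorem \ref{T:9.2}.
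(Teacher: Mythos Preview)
Your proof is correct and follows essentially the same route as the paper: identify the harmonic (resp.\ $p$-harmonic) case with $F(t)=t$ (resp.\ $F(t)=\tfrac1p(2t)^{p/2}$), compute $l_F=d_F=1$ (resp.\ $=\tfrac p2\ge\tfrac12$), set $\omega=du$, and apply Theorem~\ref{T:9.2} (the paper equivalently cites Theorem~\ref{T:9.1}). Your added remarks on the $C^2$ degeneracy at the origin for $1\le p<2$ and on how to read $d_F$ in the curvature hypotheses are reasonable clarifications that the paper itself leaves implicit.
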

\begin{proof} For a harmonic map $($ resp. $p$-harmonic map, $p\geq 1)$, we have $F(t)=t$ $($ resp. $F(t)=\frac 1p
(2t)^{\frac{p}{2}})$. Obviously $d_F=l_F= 1 ($ resp. $d_F=l_F=\frac p2) \ge \frac 12$. Take $\omega =du$. This
corollary follows immediately from Theorem \ref{T:9.1} or Theorem \ref{T:9.2}.
\end{proof}

\begin{theorem} \label{T:9.3} Suppose the radial curvature $K(r)$ of $M$ satisfies one of the five conditions in \eqref{9.3} in which $d_F = 1\, .$
If $\omega \in A^1 $ on $M$ is a $2$-balanced solution of $\langle\omega, \Delta \omega\rangle \ge 0$ or a harmonic $1$-form, for $q=2\, ,$ or for $1 < q(\ne2) < 3$ with $\omega$ satisfying $\operatorname{Condition} \operatorname{W}$ \eqref{1.2} $\big (\operatorname{cf.} \eqref{1.3}\big )$, and annihilating any tangent
vector of $\partial D$, then $\omega $ vanishes on $D$.
\end{theorem}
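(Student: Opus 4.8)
The plan is to reduce Theorem~\ref{T:9.3} to the Dirichlet-type vanishing result Theorem~\ref{T:9.1}, once the growth and differential-inequality hypotheses on $\omega$ have been upgraded to the statement that $\omega$ is closed and co-closed. (Here $D\subset M$ is understood to be a bounded starlike domain relative to the pole $x_0$, with $C^1$ boundary, exactly as in Theorem~\ref{T:9.1}.) The first step is to invoke Theorem~\ref{T:1.2} --- equivalently, Theorems~\ref{T:3.1}--\ref{T:3.5} together with Definition~\ref{D:2.3}: since $\omega\in A^1$ has $2$-balanced growth for $q=2$, or for $1<q(\ne 2)<3$ with $\operatorname{Condition}\operatorname{W}$~\eqref{1.2}, and $\langle\omega,\Delta\omega\rangle\ge 0$ (the harmonic case $\Delta\omega=0$ being a special case), we conclude $d\omega=d^{*}\omega=0$ on all of $M$, hence on $D$.

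Next I would transfer to bundle-valued forms as in the proof of Theorem~\ref{T:7.2}: taking $\xi\colon E=M\times\mathbb{R}\to M$ to be the trivial bundle with the canonical metric, $A^1$ is isometric to $A^1(\xi)$, and under this isometry $d^\nabla\omega=d\omega=0$ and $\delta^\nabla\omega=d^{*}\omega=0$. Hence the conservation law~\eqref{7.5}, namely $\langle i_X d^\nabla\omega,\omega\rangle+\langle\delta^\nabla\omega,i_X\omega\rangle=0$, holds for every vector field $X$ on $M$ because both summands vanish. The third step is to observe that the hypothesis $d_F=1$ forces $F(t)=t$ (cf. Corollary~\ref{C:9.1}), so that $F'\equiv 1$, $\operatorname{grad}(F'(\frac{|\omega|^2}{2}))=0$, and $l_F=1\ge\frac12$; substituting $F'\equiv 1$ into the $F$-conservation law~\eqref{9.1} makes the gradient term vanish and cancels the two $\langle d^\nabla i_X\omega,\omega\rangle$ contributions, collapsing~\eqref{9.1} to exactly~\eqref{7.5}. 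Thus $\omega$ satisfies the $F$-conservation law.

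Finally, the radial curvature of $M$ satisfies one of the five conditions in~\eqref{9.3} with $d_F=1$ by hypothesis, $l_F\ge\frac12$, $\omega\in A^1(\xi)$ satisfies the $F$-conservation law, and $\omega$ annihilates every tangent vector of $\partial D$ by assumption; so Theorem~\ref{T:9.1} applies directly and yields $\omega\equiv 0$ on $D$. The only point needing care --- and the most delicate (though still modest) part of the argument --- is the verification in the third step that~\eqref{7.5} is precisely the $F(t)=t$ instance of~\eqref{9.1}, i.e. that no further cross terms survive when $F'\equiv 1$. Everything else is a chain of citations to results already established in the paper.
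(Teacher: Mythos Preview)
Your proposal is correct and follows essentially the same route as the paper's proof: invoke Theorem~\ref{T:1.2} to obtain $d\omega=d^{*}\omega=0$, note that this yields the conservation law~\eqref{7.5}, and then apply Theorem~\ref{T:9.1} with $F(t)=t$. One minor imprecision: the hypothesis ``$d_F=1$'' in the theorem statement does not \emph{force} $F(t)=t$ (many $F$ have $d_F=1$); rather, one \emph{chooses} $F(t)=t$, observes that then $d_F=l_F=1$, and checks that the curvature conditions~\eqref{9.3} with $d_F=1$ are exactly what the theorem assumes --- but this reversal of logic does not affect the validity of your argument.
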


\begin{proof}
By Theorem \ref{T:1.2},  $\omega$  is both closed and co-closed on $M\, .$ Thus $\omega$ satisfies a conservation law \eqref{7.5},  
The assertion follows from 
Theorem \ref{T:9.1} in which $F(t)=t$. 
\end{proof}

\end{document}